\documentclass[article,11pt]{amsart}

\usepackage[percent]{overpic}
\usepackage{enumerate}
\usepackage{amsmath, amsthm, amsfonts,stmaryrd,amssymb}
\usepackage[top=1in, bottom=1.25in, left=1.25in, right=1.25in]{geometry}
\usepackage{mathrsfs}    
\usepackage{ dsfont }                  % Nice \mathscr fonts
\usepackage{mathtools}                   % \mathrlap
\usepackage{booktabs}
\usepackage{tikz}
\usepackage{cancel}

\usepackage{soul} 

\usetikzlibrary{spy}
\usepackage{todonotes}
\usepackage[all]{xy}
\usepackage[utf8]{inputenc}
\usepackage{faktor}
\usepackage{ marvosym }
\usepackage{bbm}                         % For \mathbbm{1}
\usepackage{framed}
\usepackage{graphicx}   
\usepackage{subcaption}
\usepackage{hyperref} % crossreferences
\usepackage{xcolor}
\hypersetup{
    colorlinks=true,
    linkcolor=blue,  
    urlcolor=cyan,
    citecolor=green!70!black,
    pdftitle={},
    pdfauthor={},
}
\newif\ifcomments
\commentstrue

\definecolor{mycolor}{RGB}{20, 20, 122}

\numberwithin{equation}{section}

\newtheorem{theorem}{Theorem}[section]
\newtheorem{remark}[theorem]{Remark}

\newtheorem{lemma}[theorem]{Lemma}

\newtheorem{proposition}[theorem]{Proposition}

\newtheorem{definition}{Definition}

%\allowdisplaybreaks
\newcommand{\bs}[1]{\boldsymbol{#1}}

\newcommand{\mc}[1]{\mathcal{#1}}
\newcommand{\eps}{\varepsilon}
\newcommand{\E}{\mc E}
\newcommand{\meas}{\operatorname{meas}}
\newcommand{\size}{\operatorname{size}}
% Math notation

\def\x{\times}
\def\p{\partial}

\def\R{{\mathbb R}}

\def\N{{\mathbb N}}

\def\Sig{\Sigma}
\def\Tt{\mc{T}}

\newcommand{\brho}{\bs\rho}
\newcommand{\btheta}{\bs\theta}

\def\ed{\mathrm{d}}

\def\exp{\operatorname{exp}}

\def\Div{\operatorname{div}}

\let\mc=\mathcal

\def\dmin{d^{\min}_\Sigma}
\def\dmink{d^{\min}_{\Sigma^k}}

\usepackage{accents}

\begin{document}
\title[Discretizing the Fokker-Planck equation with second-order accuracy]{Discretizing the Fokker-Planck equation with second-order accuracy: a dissipation driven approach}
\author[C. Cancès]{Cl\'ement Canc\`es}
\address{Inria, Univ. Lille, CNRS, UMR 8524 - Laboratoire Paul Painlev\'e, F-59000 Lille, France.}
\email{clement.cances@inria.fr, andrea.natale@inria.fr}
\author[L. Monsaingeon]{Léonard Monsaingeon}
\address{Group of Mathematical Physics, Departamento de Matem\'atica, Instituto Superior T\'ecnico,
Av. Rovisco Pais, 1049-001 Lisboa, Portugal
}
\address{
IECL, Facult\'e des Sciences et technologie, Campus, Boulevard des Aiguillettes, BP 70239, 54506 Vandœuvre-l\`es-Nancy, France.
}
\email{leonard.monsaingeon@ciencias.ulisboa.pt}
\author[A. Natale]{Andrea Natale}

\begin{abstract}
We propose a fully discrete finite volume scheme for the standard Fokker-Planck equation.
The space discretization relies on the well-known square-root approximation, which falls into the framework of two-point flux approximations.
Our time discretization is novel and relies on a tailored nonlinear mid-point rule, designed to accurately capture the dissipative structure of the model.
We establish well-posedness for the scheme, positivity of the solutions, as well as a fully discrete energy-dissipation inequality mimicking the continuous one.
We then prove the rigorous convergence of the scheme under mildly restrictive conditions on the unstructured grids, which can be easily satisfied in practice.
Numerical simulations show that our scheme is second order accurate both in time and space, and that one can solve the discrete nonlinear systems arising at each time step using Newton's method with low computational cost.
\end{abstract}
\keywords{Fokker Planck equation, finite volumes, energy dissipation, second order time and space discretization, convergence}
\subjclass[2010]{
  65M12, 65M08, 35K10
}

\maketitle

\section{Introduction}

\subsection{Fokker-Planck equation and Wasserstein gradient flows}
Because of their broad interest in physics~\cite{AS08, CMN19,KMX17,PP10}, biology~\cite{Bla14_KS,casteras2023hidden, FBC23} or social sciences~\cite{galichon2018optimal,MRS10}, Wasserstein gradient flows have been the object of strong interest by the mathematical community in the last decades.
A prototypical example of such Wasserstein gradient flows is the Fokker-Planck equation
\begin{subequations}\label{eq:FP}
\begin{align}\label{eq:FP.cons}
\partial_t \rho + \Div F& \; =0, \\
F  +  \rho \nabla V +\nabla \rho & \; = 0,
\label{eq:FP.F}
\end{align}
set in a space time domain $Q_T = (0,T) \times \Omega$, 
where $\Omega$ is a convex and bounded open subset of $\R^d$ that we further assume to be polyhedral for meshing purposes, and where $T$ is an arbitrary finite time horizon.
The background potential $V \in C^2(\overline \Omega)$ is always assumed to be given and smooth.
The kinematics is complemented by a nontrivial initial condition $\rho^0$ and no-flux boundary conditions
\begin{equation}\label{eq:FP.no-flux}
F \cdot n_{\p\Omega}  = 0  \quad \text{on}\; (0,T) \times \partial \Omega.
\end{equation}
We always assume that
\begin{equation}\label{eq:FP.rho0}
\rho(0,\cdot) = \rho^0 \geq 0
\qquad \text{with} \qquad
\mc H(\rho^0) <+\infty
\quad\text{and}\quad
\int_\Omega \rho^0 >0.
\end{equation}
Here the (negative) entropy $\mc H(\rho)$ and free energy $\E (\rho)$ are defined as
\[
\mc H(\rho)
\coloneqq \int_\Omega (\rho \log \rho - \rho + 1),
\qquad \E (\rho)
\coloneqq \mc H(\rho) + \int_\Omega \rho V,
\]
and the associated stationary Gibbs measure reads
$$
\pi \coloneqq  e^{-V}.
$$
\end{subequations}

A formal multiplication of~\eqref{eq:FP.cons} by $\log \frac{\rho}\pi$ provides that 
\begin{equation}\label{eq:EDE.0}
\frac{\rm d}{{\rm d} t} \E (\rho) + \int_\Omega \rho \left| \nabla \log \frac{\rho}{\pi} \right|^2 = 0,
\end{equation}
meaning that the energy is dissipated along time at a precise rate. 
Since the seminal work of Otto~\cite{JKO98, Otto01} and the book of Ambrosio, Gigli and Savar\'e~\cite{AGS08}, the \emph{minimizing movement scheme} (also referred to as the JKO scheme in this setting) has been playing a central role for the analysis and numerical discretization of gradient flows.
It indeed enjoys strong stability properties as well as a variational structure, in the sense that it amounts to a minimization problem at each time step.
Being the limiting curve obtained by the convergence of a JKO scheme is even one of the possible characterizations of abstract metric gradient flows, see \cite{AGS08}.
However, although very attractive from a theoretical point of view, the JKO scheme suffers from several downsides when it comes to practical implementation.
First, it is merely first order accurate in time.
Second, the optimality condition of the problem to be solved at each time-step amounts to a continuous in time mean field game, that further needs to be discretized, either with inner time stepping~\cite{BCL16, CCWW_FoCM} or by linearizing the Wasserstein distance~\cite{CGT20, Li2020, NT_FVCA9}.
Moreover, full space discretization of the JKO scheme on fixed grids creates difficulties which do not arise in semi-discrete problems~\cite{Maas11, EPSS21, HST_arXiv, MRSS_arXiv}, and as a consequence the JKO time discretization is often replaced by the computationally cheaper Backward Euler scheme~\cite{CMNR_HAL}.
Lagrangian particle schemes \cite{leclerc2020lagrangian,gallouet2022convergence,natale2023gradient} and moving meshes \cite{MO14, CDMM18} on the other hand, are generally only first order accurate in space.

Our new approach here will circumvent these numerical bottlenecks and relies instead on another by-now classical characterization of gradient flows (still formal at this stage):
Any smooth density $\rho=\rho(t,x)$ solving the continuity equation~\eqref{eq:FP.cons} with no-flux boundary conditions satisfies
\begin{equation}\label{eq:EDI.reverse}
\frac{\rm d}{{\rm d} t} \E (\rho)
=  \int_\Omega \log \frac\rho\pi  \,\partial_t \rho
= \int_\Omega F \cdot \nabla \log \frac\rho\pi \geq 
- \frac12 \int_\Omega \frac{|F|^2}{\rho} -
 \frac12\int_\Omega \rho \left| \nabla \log \frac{\rho}{\pi} \right|^2.
\end{equation}
The key observation is that equality holds in Young's inequality above if and only if \eqref{eq:FP.F} is fulfilled.
Therefore if $\rho$ is such that \eqref{eq:FP.cons} holds and satisfies in addition the reverse \emph{Energy-Dissipation Inequality} (EDI):
\begin{equation}
\label{eq:EDI.formal}
\frac{\rm d}{{\rm d} t} \E (\rho)
\leq
- \frac12 \int_\Omega \frac{|F|^2}{\rho} -
\frac12\int_\Omega \rho \left| \nabla \log \frac{\rho}{\pi} \right|^2,
\end{equation}
then $\rho$ must also satisfy \eqref{eq:FP.F}.
The first term in the right-hand side dissipation only depends on the kinematics through the continuity equation \eqref{eq:FP.cons}, while the second part, known as the \emph{Fisher information functional}, is related to the specific choice of an energy through the first variation $\frac{\delta\E }{\delta\rho}=\log\frac{\rho}{\pi}$.
In order to make the EDI formulation rigorous, and following ideas of~\cite{BB00}, one introduces the one-homogeneous, convex, and lower semi-continuous Benamou-Brenier function $B: [0,+\infty) \times\R^d \to [0,+\infty]$ defined by
\begin{equation}\label{eq:BB}
B(a,b) \coloneqq \left\{
\begin{array}{ll}
\displaystyle \frac{|b|^2}{2a} & \text{ if } a>0\,,\\
0 & \text{ if } b = 0 \text{ and } a =0\,,\\
+\infty & \text{ otherwise}\,, 
\end{array}
\right.
\end{equation}
and observes that the Fisher information rewrites as a convex function of $\rho$ under the form
\[
\mc R(\rho)
\coloneqq
\frac12\int_\Omega \rho \left| \nabla \log \frac{\rho}{\pi} \right|^2
=
2 \int_\Omega \pi \left| \nabla \sqrt{\frac{\rho}{\pi}} \right|^2
.
\]
Then, integrating \eqref{eq:EDI.formal} over time leads to the following notion of EDI solution to~\eqref{eq:FP}, thoroughly developed in~\cite{AGS08}.
\begin{definition}\label{def:EDI}
A curve $\rho \in C([0,T]; L^1_w (\Omega))$ is an EDI solution to~\eqref{eq:FP} corresponding to the initial solution $\rho^0$ if, denoting by $\rho^T = \rho(T,\cdot)$, there holds
\begin{equation}\label{eq:EDI}
\E (\rho^T) +\int_0^T \mc R(\rho) + \inf_F \left\{\int_{Q_T} B(\rho, F)\right\} \leq \E (\rho^0),
\end{equation}
where the infimum is taken among vector fields $F\in L^1(Q_T;\mathbb{R}^d)$ satisfying the continuity equation $\partial_t\rho+\Div F=0$ with initial/terminal data $\rho^0,\rho^T$ and no-flux boundary conditions:
\begin{equation}
\label{eq:FP.cons.weak}
\int_{Q_T} \rho\, \partial_t \varphi + \int_{Q_T} F\cdot\nabla \varphi - \int_\Omega \rho^T \varphi(T,\cdot)  + \int_\Omega  \rho^0 \varphi(0,\cdot)
=0\,, \quad \forall \,\varphi \in C^1(\overline Q_T).
\end{equation}
\end{definition}
\noindent
In the previous definition $L^1_w (\Omega)$ denotes the space of integrable functions equipped with its weak topology.
It is not difficult to check \cite{AGS08} that densities $\rho \in L^1(Q_T)$ satisfying \eqref{eq:FP.cons.weak} with finite kinetic energy $\iint _{Q_T}B(\rho,F)$ are $L^1_w (\Omega)$-continuous in time, and satisfy $\rho(0) = \rho^0,\rho(T)=\rho^T$.

For the sake of self-completeness we collect basic properties of EDI solutions in Appendix~\ref{sec:appendix_EDI}.
Let us only mention at this stage that a) EDI solutions are unique, and b) they are automatically distributional solutions of the Fokker-Planck equation~\eqref{eq:FP}.

% % % % % % % % % % % % % % % % % % % % % % % % % % % % % % % % % % % % % % % % % % % % % % % % % % % % % % % % % % % % % % % % % %
\subsection{Our contribution and organization of the paper}
Our goal here is to propose a fully discrete finite volume scheme based on a \emph{two-point flux approximation} (TPFA) which
satisfies a discrete counterpart of the EDI formulation \eqref{eq:EDI.formal} while being second order accurate in both space and time.

The space discretization we adopt relies on the well-established \emph{square-root approximation} (SQRA) scheme~\cite{LFW13, Heida18}.
As exploited in~\cite{CV23}, this space discretization enjoys a dissipative structure involving hyperbolic cosine dissipation potentials, cf. Section~\ref{ssec:dissip}, very much related to \cite{MPR14, PRST22, PS23}.
Our main contribution here concerns the time discretization, and our strategy consists in capturing the energy dissipation
\begin{equation}\label{eq:dissip.step}
\int_{n\tau}^{(n+1)\tau}  \mc R(\rho) + \int_{n\tau}^{(n+1)\tau} \int_\Omega B(\rho, F)
\end{equation}
over each time step in a sufficiently accurate way to be discussed shortly.
In~\cite{JST19,stefanelli2022new} this is achieved by recursively minimizing a discrete version of the full dissipation functional appearing in the left-hand side of the EDI \eqref{eq:EDI}, resulting in a variational -- but first order scheme.
In this work our approximation of the above quantity will rely instead on a mid-point rule in order to recover second order accuracy.
More precisely, given a density $\rho^n$ at time $t^n=n\tau$, we first introduce a density $\theta^{n+1/2}$ at the intermediate time $t^{n+1/2}=(n+1/2) \tau$.
The next time step will be obtained as a particular extrapolation
$$
\rho^{n+1}=\Xi(\rho^n,\theta^{n+1/2})
$$
to be detailed in Section~\ref{ssec:scheme} below.
Defining the Fokker-Planck flux at time $t^{n+1/2}$
$$
F^{n+1/2}
=
\theta^{n+1/2}\nabla\log \frac{\theta^{n+1/2}}{\pi}
=
\pi\nabla\frac{\theta^{n+1/2}}{\pi}
=\nabla\theta^{n+1/2}+\theta^{n+1/2}\nabla V
$$
(or rather, its discrete SQRA counterpart \eqref{eq:fluxes.linear} later on), we further impose the discrete continuity equation
\[
\frac{\rho^{n+1} - \rho^n}\tau + \Div\, F^{n+1/2} = 0,
\]
and we finally approximate the dissipation \eqref{eq:dissip.step} by
\[
\tau \left\{\int_\Omega B(\theta^{n+1/2}, F^{n+1/2}) + \mc R(\theta^{n+1/2}) \right\}.
\]
In addition to the desirable preservation of mass and positivity, our specific choice of extrapolation combined with the variational structure of the SQRA flux  will entail a discrete (upper) chain rule for the energy density (see Lemma~\ref{lem:upper_chain_rule} below), which in turn will crucially result in a fully discrete EDI inequality.
Passing next to the limit in an appropriate sense, we will establish full convergence of the scheme towards a dissipative EDI solution.
In the time-continuous setting, this idea of proving convergence by passing to the limit in a semi-discrete EDI was already implemented in \cite{forkert2022evolutionary} for a similar finite volume discretization of the Fokker-Planck.
This essentially boils down to proving asymptotic lower bounds on the two dissipation functionals involved in a continuous-in-time discrete-in-space EDI, which in our setting will be given by Propositions \ref{prop:fisher} and \ref{prop:bb} below.
Nonetheless, the possibility of vacuum (vanishing of the densities $\rho,\theta$) makes the analysis more delicate in our setting compared to \cite{forkert2022evolutionary}, and the methods of proof are different.

\begin{remark}
Exploiting the estimates and the resulting compactness properties established later on, one could directly prove that the solutions produced in the limit by our numerical scheme are distributional solutions of the Fokker-Planck equation, following for instance the methodology proposed in~\cite{CG_VAGNL, CV23} and even for non-convex domains $\Omega$.
At the price of the convexity assumption on $\Omega$, the recovery of~\eqref{eq:FP} in the distributional sense is for free (cf. Proposition~\ref{prop:props_EDI} in the Appendix), together with uniqueness of the EDI solutions~\cite{gigli2010heat}.
\end{remark}

Finally, let us stress that our time extrapolation is local and only requires pointwise evaluation ($\rho^{n+1}_K=\Xi(\rho^n_K,\theta^{n+1/2}_K)$ for each cell $K$ in the finite volume discretization), in contrast to the nonlocal one introduced in~\cite{GNT24}.
Similarly, the mid-point rule proposed in~\cite{LT17} relies on the computationally expensive evaluation of intermediate Wasserstein geodesics, which we completely dispense from.
Our approach also shares some features with \cite{LWWYZ_arXiv}, but with different relations between $\theta^{n+1/2}$, $\rho^n$ and $\rho^{n+1}$.
The specific choices we make for $\Theta$ and $\Xi$ in this paper allow us to rigorously prove the convergence or our scheme, beyond the partial consistency and stability results provided in \cite{LWWYZ_arXiv}. 

The paper is organized as follows:
The scheme is introduced in Section~\ref{sec:scheme}.
After introducing usual concepts related to TPFA finite volumes in  Section~\ref{ssec:mesh}, the space and time discretization are presented in Section~\ref{ssec:scheme}, where the extrapolation is constructed.
Then elements of numerical analysis at fixed grid are presented in Section~\ref{sec:fixedgrid}.
This encompasses the well-posed character of the scheme in Section~\ref{ssec:wellposedness} as well as the fully discrete EDI in Section~\ref{ssec:dissip}.
The latter plays a crucial role in Section~\ref{sec:convergence}, where the convergence of the scheme towards an EDI solution is established under some restriction on the mesh detailed in Section~\ref{ssec:mesh2}.
Compactness properties on the approximate reconstructions are then derived in Section~\ref{ssec:compact} and refined in \ref{ssec:compact.strong} thanks to some discrete Aubin-Lions-Simon argument.
We pass to the limit and establish two separate Gamma-liminf's for the dissipation functionals in Sections~\ref{ssec:conv.Fisher} and \ref{ssec:conv.BB}, and the full convergence is then detailed in Section~\ref{ssec:conv.final}.
Numerical results are then presented in Section~\ref{sec:num}, showing that our scheme is second order accurate in time and space.
Finally, we defer two technical parts to the appendix: Appendix~\ref{sec:appendix_EDI} recalls basic properties of EDI solutions, while Appendix~\ref{sec:appendix_Moussa} contains an extension adapted to our needs of an Aubin-Lions-Simon lemma by Moussa~\cite{moussa2016some}.

% % % % % % % % % % % % % % % % % % % % % % % % % % % % % % % % % % % % % % % % % % % % % % % % % % % % % % % % % % % % % % % %
\section{A space-time discretization for the Fokker-Planck equation}\label{sec:scheme}

\subsection{Finite volume discretization}\label{ssec:mesh}
The space discretization of our scheme falls into the framework of TPFA finite volumes. It requires the definition of an admissible mesh of the domain $\Omega \subset\mathbb{R}^d$, which is assumed to be polyhedral with Lebesgue measure $m_\Omega>0$.

An admissible mesh of $\Omega$ is a triplet $(\Tt,\overline{\Sigma},(x_K)_{K\in\Tt})$, consisting in a set of cells $K\in\Tt$, facets $\sigma \in \overline{\Sigma}$, and cell centers $x_K\in \Omega$, satisfying in addition the conditions in \cite[Definition 9.1]{eymard2000finite}.
Specifically, we require the following:
 \begin{enumerate}[1)]
 \item
 The cells $K\in\Tt$ are open disjoint polyhedra with positive $d$-dimensional Lebesgue measure $m_K>0$.
 They form a tessellation of $\Omega$, i.e.\
 \[\bigcup_{K\in\Tt} \overline{K} = \overline{\Omega}\quad \text{and} \quad \sum_{K\in\Tt} m_K = m_\Omega.\]
 \item
 The facets $\sigma \in \overline{\Sigma}$ are closed subsets of $\overline{\Omega}$ contained in an hyperplane of $\mathbb{R}^d$, and with strictly positive $(d-1)$-dimensional Hausdorff (or Lebesgue)
measure denoted by $m_\sigma>0$.
Every facet $\sigma \in \overline{\Sigma}$ satisfies either $\sigma = K |L \coloneqq \partial K \cup \partial L$ or $\sigma= \partial K \cup \partial \Omega$, for some $K,L\in \Tt$ with $K \neq L$.
The subset of interior facets  $\Sigma\subset \overline{\Sigma}$ is the set of facets $\sigma$ for which there exists $K,L \in \Tt$ such that  $\sigma = K |L $.
\item
For any cell $K\in \Tt$, there exists a subset $\overline{\Sigma}_K \subset \overline{\Sigma}$ such that
\[
\partial K =  \bigcup_{\sigma \in \overline{\Sigma}_K} \sigma\quad \text{and} \quad \overline{\Sigma} =  \bigcup_{K \in {\Tt}} \overline{\Sigma}_K\,.
\]
We denote the interior facets associated with a cell $K$ by  $\Sigma_K = \overline{\Sigma}_K \cap \Sigma$.
\item
Two cell centers $x_K$ and $x_L$ coincide if and only if $K=L$. Moreover, if $\sigma = K|L$ then $x_K-x_L$ is orthogonal to $\sigma$, and  denoting $
d_\sigma \coloneqq |x_K - x_L|$, the outward normal to the cell $K$ on the facet $\sigma \in \Sigma_K$ is given by
\begin{equation}\label{eq:ortho}
\quad n_{K\sigma} = \frac{x_L - x_K}{d_\sigma}\,.
\end{equation}
\end{enumerate}
Discrete densities are represented by collections of degrees of freedom $\brho = (\rho_K)_{K\in\Tt} \in \mathbb{R}^{\Tt}_+$, where $\rho_K$ is the degree of freedom associated with the cell $K\in\Tt$.
Similarly, fluxes are represented by the collection of outward fluxes through the inner facets, and denoted as follows: $\bs{F}= ((F_{K\sigma}, F_{L\sigma}))_{\sigma = K|L \in\Sigma}\subset\mathbb{R}^{2\Sigma}$.
We also define the space of conservative fluxes as follows
\[
\mathbb{F}_{\Sigma} \coloneqq \{ \bs{F}\in \mathbb{R}^{2\Sigma}~;~ F_{K\sigma}+F_{L\sigma} =0 \quad \forall\, \sigma = K|L \in \Sigma\}\,.
\]
For any $\bs{F} \in \mathbb{F}_{\Sigma}$ we denote $F_\sigma \coloneqq |F_{K\sigma}| = |F_{L\sigma}|$.

We discretize a fixed time interval $[0,T]$ in $N \in \mathbb{N}^*$ time steps of size $\tau = T/N$.
A discrete time-dependent density is described by a collection $(\brho^n)_{n=0}^N$, where $\brho^n$ is the discrete density associated with the time $t^n = n \tau$.
Discrete time-dependent fluxes are staggered in time with respect to the densities and they are therefore described by $(\bs{F}^{n+1/2})_{n=0}^{N-1}$, with $\bs{F}^{n+1/2}$ representing now the  discrete fluxes at time $t^{n+1/2} = (n+1/2) \tau$.
% % % % % % % % % % % % % % % % % % % % % % % % % % % % % % % % % % % % % % % % % % % % % % % % % % % % % % % % % % % % %
\subsection{Numerical scheme}\label{ssec:scheme}
As already mentioned, the dissipation properties of our scheme will only be guaranteed by the correct choice of an extrapolation $\rho^{n+1}=\Xi(\rho^n,\theta^{n+1/2})$.
In order to construct $\Xi$ we first define a specific nonlinear mean $\Theta:\R_+ \times \R_+ \rightarrow \R_+$
\begin{equation}\label{eq:theta1}
\Theta(a,b) \coloneqq {H^*}'\left( \frac{H(b)-H(a)}{b-a} \right) = \exp\left( \frac{b\log b-a \log a}{b-a}- 1\right)\,, \quad \forall\, a,b >0 \,, ~ a\neq b,
\end{equation}
where the entropy function
$$
H(a) \coloneqq \left\{ 
\begin{array}{ll}
a\log a -a +1 & \text{if }a > 0\\
1           & \text{if } a=0 \\
+ \infty & \text{if}\; a <0
\end{array}
\right.,
$$
has explicit Legendre-Fenchel transform $H^*(p) = \exp(p)-1$. We naturally extend by continuity
\begin{equation}\label{eq:theta2}
\Theta(a,a) \coloneqq a\,, \quad \Theta(a,0) = \Theta(0,a) \coloneqq e^{-1} a\,, \quad \forall\,a\geq 0.
\end{equation}
Note that by usual properties of convex duality there holds $(H^*)'(p)=(H')^{-1}(p)$ for all $p \in \R$. This fact together with \eqref{eq:theta1}--\eqref{eq:theta2} implies that
\begin{equation}
\label{eq:chain_rule_H}
(b-a)H'(\Theta(a,b))=H(b)-H(a) \,, \quad \forall\, a,b\geq 0\,, ~(a,b)\neq (0,0)\,,
\end{equation}
which will precisely entail the discrete chain rule.

At least formally, our extrapolation is simply given by inverting the mean, i.e.\ $\Xi(a,\cdot)=\Theta(a,\cdot)^{-1}$.
However, as is clear from Figure~\ref{fig:Fx}, $\Theta(a,0)>0$ prevents any global invertibility and some extra care is needed in order to obtain a well-posed scheme.
To this end, one can check that $\Theta\in C(\R_+ \times \R_+; \R_+)$ defined by \eqref{eq:theta1} and \eqref{eq:theta2} is jointly concave in its arguments and 1-homogeneous.
In particular, defining the concave, increasing function $f \in C(\R_+;[e^{-1},\infty))$ as
\[
f(r) \coloneqq \Theta(1,r)=\left\{
\begin{array}{ll}\displaystyle
 \exp \left(\frac{r\log(r)-r+1}{r-1} \right) & \text{if } r>0\\
 e^{-1} & \text{if } r=0  
\end{array}\right.\,,
\quad 
\]
we have that for any $a,b>0$
\[
\Theta(a,b)
=
a f\left(\frac{b}{a}\right)
= b f\left(\frac{a}{b}\right) = \Theta(b,a)\,.
\]
Since $f$ is concave and increasing (see Figure \ref{fig:Fx}), its inverse $r=f^{-1}(s)$ is unambiguously defined at least on $[e^{-1},+\infty)$.
Extending this inverse to the whole real line $s\in\R$, our extrapolation $\Xi$ is finally defined as
\begin{equation}\label{eq:GH}
g(s) \coloneqq \left\{
\begin{array}{ll}
 f^{-1}(s) & \text{if } s> e^{-1}\\
 0 & \text{otherwise}
\end{array}\right.
\quad\text{and}
\quad 
\Xi(a,c) \coloneqq \left\{
\begin{array}{ll}
\displaystyle a g\left(\frac{c}{a}\right)& \text{if } a>0 \\
{e} c  & \text{if }a=0
\end{array},\quad c\in\R
\right.\,.
\end{equation}
Note that $f$ has vertical tangent at $r=0^+$, which implies that $g(\cdot)$ and $\Xi(a,\cdot)$ are $C^1$, convex functions for any fixed $a\geq 0$ as depicted in Figure~\ref{fig:Fx}.
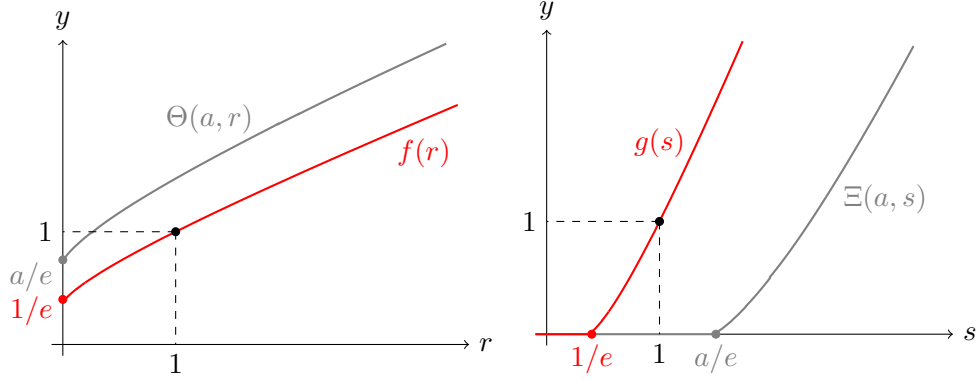
\begin{figure}
\centering
\begin{tikzpicture}[scale=1.5]
  \draw[->] (-.1,0) -- (3.6,0) node[right] {$r$};
  \draw[->] (0,-0.1) -- (0,2.7) node[above] {$y$};
  \draw[ thick, red, domain=.2:3.5,smooth,variable =\x] plot ({\x},{exp((ln{\x}*\x -\x+1)/(\x-1))});
  \draw[ thick, gray, domain=.001:.96,smooth,variable =\x,samples = 201] plot ({2*\x},{exp((ln{\x}*\x -\x+1)/(\x-1))*2});
  \draw[ thick, gray, domain= .95:1.05,smooth,variable =\x,samples = 201] plot ({2*\x},{(2*\x+2)/2});
  \draw[ thick, gray, domain= 1.04:1.7,smooth,variable =\x,samples = 201] plot ({2*\x},{exp((ln{\x}*\x -\x+1)/(\x-1))*2});
  \node [gray] at (1.3,2.) {$\Theta(a,r)$};
  \filldraw [gray] (0,.75) circle (1pt) ;
   \node[gray,left] at (0,.6) {$a/e$};
  \node[red] at (3.2,1.7) {$f(r)$};
  \draw[ thick, red, domain=.01:.2,smooth,variable =\x, samples = 101] plot ({\x},{exp((ln{\x}*\x -\x+1)/(\x-1))});
  \draw[ thick, red, domain=.001:.01,smooth,variable =\x, samples = 101] plot ({\x},{exp((ln{\x}*\x -\x+1)/(\x-1))});
  \filldraw (1,1) circle (1pt) ;
  \filldraw[red] (0,0.4) circle (1pt);
  \node[red,left] at (0,0.3) {$1/e$};
  \draw[dashed] (1,1) -- (1,0) node[below] {$1$};
  \draw[dashed] (1,1) -- (0,1) node[left] {$1$};
\end{tikzpicture}
\begin{tikzpicture}[scale=1.5]
  \draw[->] (1.47,0) -- (3.6,0) node[right] {$s$};
  \draw[->] (0,-0.1) -- (0,2.7) node[above] {$y$};
  \draw[ thick, red, domain=.2:2.6,smooth,variable =\x, samples = 101] plot ({exp((ln{\x}*\x -\x+1)/(\x-1))},{\x});
  \draw[ thick, red, domain=.01:.2,smooth,variable =\x, samples = 101] plot ({exp((ln{\x}*\x -\x+1)/(\x-1))},{\x});
  \draw[ thick, red, domain=.001:.01,smooth,variable =\x, samples = 101] plot ({exp((ln{\x}*\x -\x+1)/(\x-1))},{\x});
  \draw[ gray,thick, domain=-.1: 1.47,smooth,variable =\x, samples = 101] plot ({\x},0);
  \draw[ thick, red, domain=-.1:0.367,smooth,variable =\x, samples = 101] plot ({\x},0);
  \draw[ gray, thick, domain=.001:.64,smooth,variable =\x, samples = 401] plot ({4*exp((ln{\x}*\x -\x+1)/(\x-1))},{4*\x});
  \node[red] at (1,1.7) {$g(s)$};
  \node[gray] at (3,1.2) {$\Xi(a,s)$};
  \filldraw (1,1) circle (1pt) ;
  \filldraw [red] (.4,0) circle (1pt) ;
   \node[red,below] at (.4,0) {$1/e$};
   \filldraw [gray] (1.5,0) circle (1pt) ;
   \node[gray,below] at (1.5,0) {$a/e$};
  \draw[dashed] (1,1) -- (1,0) node[below] {$1$};
  \draw[dashed] (1,1) -- (0,1) node[left] {$1$};
\end{tikzpicture}
\\
\caption{ A graphical representation of the functions $f$, $g$, $\Theta(a,\cdot)$ and $\Xi(a,\cdot)$.
}\label{fig:Fx}
\end{figure}
For any fixed $a \geq 0$, $\Theta(a,\cdot)$ is an invertible map from $\R^+$ to  $[a e^{-1},\infty)$ and $\Xi(a, \cdot)$ coincides with its inverse when restricted on $[a e^{-1},\infty)$.
In other words, $c = \Theta(a,b)$ is a mean between $a$ and $b$, whereas $b= \Xi(a,c)$ is the corresponding extrapolation.
We stress again that, for any $a\geq 0$, $\Xi(a,\cdot) $ is a well-defined $C^1$, convex, non-decreasing function on the whole $\mathbb{R}$.
We have moreover
\[
\Xi(a, \Theta(a,b)) = b \,, \quad \Theta(a, \Xi(a,c)) = c\,
\qquad\forall\,a,b \geq 0, \,c\geq e^{-1} a,
\]
but this invertibility relation may fail if $c<e^{-1}a$.
This turns out to be quite delicate because our numerical scheme primarily solves for $\theta^{n+1/2}$, and then extrapolates $\rho^{n+1}=\Xi(\rho^n,\theta^{n+1/2})$:
If for some reason $0\leq \theta^{n+1/2}<e^{-1}\rho^n$, which does happen at least from our numerical experiments, then the invertibility relation fails and the chain rule \eqref{eq:chain_rule_H} does not hold as such.
Fortunately, and this is the whole cornerstone of our subsequent analysis, one still has an \emph{upper} chain-rule \eqref{eq:upper_chain_rule}.
For convenience we collect here useful properties of $\Theta,\Xi$.
\begin{lemma}
\label{lem:upper_chain_rule}
There holds
\begin{equation}
 \label{eq:upper_chain_rule}
 \forall\,a\geq 0, c>0:
 \qquad
 b=\Xi(a,c) \quad \Rightarrow \quad (b-a)H'(c)\geq H(b)-H(a)
\end{equation}
with equality if $c\geq e^{-1}a$, and moreover
\begin{equation}
\label{eq:bounds_Theta_Xi}
\forall \,a,b\geq 0,\,c\in\R:
\qquad
\Theta(a,b)\geq \min(a,b)
\quad\text{and}\quad
 \Xi (a,c)\geq 2c-a.
\end{equation}
\end{lemma}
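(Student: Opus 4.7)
The plan is to split the lemma into its two assertions and reduce each to already-recorded properties of $\Theta$ together with direct calculations from the definitions \eqref{eq:theta1}--\eqref{eq:GH}.

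For the upper chain rule \eqref{eq:upper_chain_rule}, I would distinguish the two regimes dictated by the definition of $g$. When $c \geq e^{-1}a$, a short verification shows that $b = \Xi(a,c)$ is precisely the (unique, non-negative) solution of $\Theta(a,b) = c$, so the identity \eqref{eq:chain_rule_H} applies directly and yields equality. When $c < e^{-1}a$ (which forces $a>0$), one has $c/a < e^{-1}$ and hence $b = \Xi(a,c) = a\, g(c/a) = 0$; the inequality $(0-a)H'(c) \geq H(0) - H(a)$ then reduces, after using $H(0) = 1$ and $H'(c) = \log c$, to the elementary bound $\log(a/c) \geq 1$, which is exactly the case assumption $c \leq a/e$.

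For the bounds \eqref{eq:bounds_Theta_Xi} I would treat each separately but using the same principle, namely that the chain rule identity exhibits $\Theta(a,b)$ as a mean between $a$ and $b$. The lower bound $\Theta(a,b) \geq \min(a,b)$ follows in one line either by applying the mean value theorem to $H$ in \eqref{eq:chain_rule_H}, so that $\log \Theta(a,b)$ lies between $\log a$ and $\log b$, or by the monotonicity of $f$ combined with $f(1) = 1$; the degenerate case $a = 0$ is handled directly from \eqref{eq:theta2}. The extrapolation bound $\Xi(a,c) \geq 2c - a$ hinges on the complementary arithmetic-mean upper bound $\Theta(a,b) \leq (a+b)/2$, which I would derive in a single line from the concavity, symmetry, and $1$-homogeneity of $\Theta$ by evaluating the concavity inequality at the midpoint of $(a,b)$ and $(b,a)$. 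Plugging $b = \Xi(a,c)$ in the regime $c \geq e^{-1}a$ and rearranging gives the claim, while for $c \leq e^{-1}a$ one has $\Xi(a,c) = 0$ and $2c - a \leq (2e^{-1}-1)a < 0$ since $e > 2$, so the inequality is trivial.

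The only slightly delicate step is the vacuum regime $c < e^{-1}a$ of the chain rule, where $\Xi$ saturates at zero and the exact identity $\Theta(a,\Xi(a,c)) = c$ fails; this is precisely where one must settle for an inequality rather than an equality, and it explains why \eqref{eq:upper_chain_rule} is the sharpest statement one can hope for in general. All remaining steps are routine verifications from the definitions and the already-noted structural properties of $\Theta$.
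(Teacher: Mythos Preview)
Your proof is correct and follows essentially the same case structure as the paper's argument: the chain-rule inequality is split according to whether $b=\Xi(a,c)$ lands in the invertible regime or at zero, and $\Theta(a,b)\geq\min(a,b)$ is obtained from $f(1)=1$ and monotonicity (your alternative via the mean value theorem on $H$ also works for $a,b>0$).

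The one noteworthy difference is your treatment of $\Xi(a,c)\geq 2c-a$. The paper argues directly from the convexity of $g$ via the tangent line at $s=1$, which requires knowing $g'(1)=2$ (equivalently $f'(1)=\tfrac12$, a small computation the paper does not display). You instead establish the dual bound $\Theta(a,b)\leq\tfrac{a+b}{2}$ from the concavity, symmetry and $1$-homogeneity of $\Theta$, and then invert; this is slightly cleaner since it bypasses the computation of $g'(1)$. Both arguments share the same harmless gap at $a=0$ with $c<0$ (where $\Xi(0,c)=ec<2c$), which is irrelevant since the bound is only ever applied with $c=\theta_K^{n+1/2}>0$.
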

\noindent
The possible failure of equality in \eqref{eq:upper_chain_rule} is precisely what makes our scheme non-variational 
in the sense that $\brho^{n+1}$ cannot be characterized as the minimizer of some functional (cf. Remark~\ref{rem:variational}). 
Whenever the scheme produces a value $\theta^{n+1/2}_K<e^{-1}\rho^n_K$ (which eventually happens at least in our simulations) an entropy release
\[
r_K^{n+1/2} = H(\rho^{n+1}_K) - H(\rho_K^n) - H'(\theta_K^{n+1/2}) (\rho_K^{n+1} - \rho_K^n) < 0
\] 
occurs in \eqref{eq:upper_chain_rule} for $\rho^{n+1}_K=\Xi(\rho^n_K,\theta^{n+1/2}_K)$, compared to the expected variational equality.
Note however that our scheme keeps some variational character as it amounts to a minimization problem in $\btheta^{n+1/2}/ \boldsymbol{\pi}$, cf. the proof of Proposition~\ref{prop:existence} below.

\begin{proof}
Let us begin with \eqref{eq:upper_chain_rule} and fix $a\geq 0$.
Since $c> 0$ one always has $b=\Xi(a,c)\geq 0$, including if $a=0$ (in which case $b=\Xi(0,c)=ec$).
If $b>0$ then by definition of $\Xi$ we have $c> e^{-1}a$, thus one can legitimately write $\Theta(a,b)=\Xi(a,\cdot)^{-1}(b)\Rightarrow c=\Theta(a,b)$ and from \eqref{eq:chain_rule_H} we see that equality holds in \eqref{eq:upper_chain_rule}. If $b=0$ and $a=0$, \eqref{eq:upper_chain_rule} is trivially safisfied.
If now $b=0$ and $a>0$ then, again by definition of $\Xi$, we see that necessarily $c\leq \Theta(a,0)$ and therefore by convexity $H'(c)\leq H'(\Theta(a,0))$.
Whence
$$
(b-a)H'(c)=(0-a)H'(c)
\geq (0-a)H'(\Theta(a,0))
=H(0)-H(a)
$$
as desired, where the last equality follows again from \eqref{eq:chain_rule_H}.

As for \eqref{eq:bounds_Theta_Xi}, consider first the case $a\leq b$.
Then $\Theta(a,b)=a f(b/a)\geq af(1)=a$, and thus by symmetry $\Theta(a,b)\geq \min (a,b)$.
The second inequality in \eqref{eq:bounds_Theta_Xi} follows by convexity: for $a\geq 0$ simply write $\Xi (a,c)=ag(c/a)\geq a[g(1)+g'(1)(c/a-1)]=a[1+2(c/a-1)]=2c-a$, and the proof is complete.
\end{proof}

We are now in position of defining the scheme.
Let $V\in C^2(\overline \Omega)$ be a given potential and $\rho^0 \in L^1(\Omega;\R_+)$ a nonnegative density with finite entropy and positive total mass
\begin{equation*}
\mc{H}(\rho^0) = \int_\Omega H(\rho^0) < \infty\,,
\qquad
\rho^0[\Omega] \coloneqq \int_\Omega \rho^0 >0\,,
\end{equation*}
where as before $H(a) = a \log a -a +1$.
Denote by $\bs{V} \in \mathbb{R}^\Tt$, $\bs{\pi} \in \mathbb{R}^\Tt$ and $\brho^0 \in \mathbb{R}^\Tt$ the discrete functions defined by
\begin{equation}
\label{eq:Vpirho0}
V_K \coloneqq V(x_K) \,,
\qquad
\pi_K \coloneqq \exp(-V_K)\,,
\qquad
\rho^0_K \coloneqq\frac{1}{m_K} \int_K \rho^0,
\qquad \forall\,K \in \Tt.
\end{equation}
A discrete solution is a pair of discrete curves $(\brho^n)_{n=0}^N$ and $(\btheta^{n+1/2})_{n=0}^{N-1}$ satisfying for all $n=0,\ldots, N-1$,
\begin{equation}\label{eq:discretePDE}
m_K \frac{\rho^{n+1}_K - \rho^n_K}{\tau} + \sum_{\sigma \in\Sigma_K} m_\sigma F_{K\sigma}^{n+1/2} = 0, \qquad K \in \Tt,
\end{equation}
where $\bs{F}^{n+1/2} \in \mathbb{F}_\Sigma$ is the square-root approximation (SQRA) finite volume flux \cite{LFW13, Heida18}
\begin{equation}
\label{eq:fluxes.linear}
F_{K\sigma}^{n+1/2}  = \frac{1}{d_\sigma}  \pi_\sigma \left(\frac{\theta^{n+1/2}_K}{\pi_K} - \frac{\theta^{n+1/2}_L}{\pi_L}\right)
\quad\text{with}\quad
\pi_\sigma = \sqrt{\pi_K\pi_L},
\qquad K \in \Tt, \, \sigma\in \Sigma_K,
\end{equation}
constructed on the intermediate densities $\btheta^{n+1/2} = ( \theta_K^{n+1/2})_{K\in\Tt}$ at time $t^{n+1/2} = t^n + \tau/2$.
To complete the scheme, the discrete density  $\brho^{n+1}$ at time $t^{n+1}$ is defined from $\brho^n$ and $\btheta^{n+1/2}$ by extrapolation:
\begin{equation}\label{eq:rhoG}
\rho^{n+1}_K = \Xi(\rho^n_K,\theta^{n+1/2}_K), \qquad K \in \Tt.
\end{equation}
We stress again that this can be considered as a problem in the single primary variable $\btheta^{n+1/2}$, from which $\bs F^{n+1/2},\bs\rho^{n+1}$ can be explicitly obtained whenever needed.
\begin{remark}\label{rem:CN}
Our scheme can be thought of as an extension of the usual Crank-Nicolson scheme, which corresponds to the linear time-extrapolation
\[
\rho_K^{n+1} = \Xi_\text{CN}(\rho_K^n, \theta_{K}^{n+1/2}) = 2  \theta_{K}^{n+1/2} - \rho_K^n.
\]
This scheme is known to be second-order accurate in time and energy stable for quadratic energies.
However, it is neither positivity preserving nor entropy-stable for Boltzmann type energies, and its extension to our entropic framework thus requires the introduction of the nonlinear extrapolation \eqref{eq:GH}. 

Let us also mention that our approach shares similarities with the so-called discrete variational derivative method~\cite{FM11}, at least when the relation  $\btheta^{n+1/2} = \Theta(\brho^{n}, \brho^{n+1})$ holds true (i.e.\ when equality holds in~\eqref{eq:upper_chain_rule}). However our choice to use $\btheta^{n+1/2}$ as an unknown and then to extrapolate to reconstruct $\brho^{n+1}$ allows us to deal with the degenerate geometry stemming from optimal transportation and to incorporate the positivity constraint in the scheme, while allowing the entropy release leading to the inequality in \eqref{eq:upper_chain_rule}. This is a cornerstone to carry out the rigorous convergence analysis presented in this paper. The choice of keeping $\btheta^{n+1/2}$ as the main unknown is also key in the implementation strategy, which shows great robustness despite the singularly nonlinear character of the scheme. 
\end{remark}

% % % % % % % % % % % % % % % % % % % % % % % % % % % % % % % % % % % % % % % % % % % % % % % % % % % % %

\section{Discrete well-posedness and dissipative structure}\label{sec:fixedgrid}
In this section we prove the main properties of the scheme \eqref{eq:discretePDE}--\eqref{eq:rhoG}.
We establish existence and uniqueness of solutions, as well as a discrete version of the energy dissipation inequality which will be crucial for the convergence analysis carried out in Section~\ref{sec:convergence}.

% % % % % % % % % % % % % % % % % % % % % % % % % % % % % % % % % % % % % % % % % % % % % % % % % %

\subsection{Existence and uniqueness of solutions}\label{ssec:wellposedness}
We first establish one-step well-posedness of the scheme, and therefore global existence and uniqueness of the whole discrete curve by immediate recursion.

\begin{proposition}
\label{prop:existence}
For any $\brho^n\geq 0$ with $\sum_K  m_K\rho^n_K>0$ there exists a unique $\btheta^{n+1/2}$ and $\brho^{n+1}$ verifying \eqref{eq:discretePDE}--\eqref{eq:rhoG}.
Moreover there holds
\[
\theta^{n+1/2}_K>0,\qquad \rho^{n+1}_K \geq 0 ,
\qquad
\sum_{K\in\Tt} m_K \rho^{n+1}_K = \sum_{K\in\Tt}  m_K\rho^{n}_K,
\]
and
\begin{equation}
\label{eq:lower_upper_theta}
 \frac{\rho^{n+1}_K +\rho^n_K}{2}
 \geq
 \theta^{n+1/2}_K
 \geq
 \min(\rho^{n+1}_K,\rho^n_K)\,
\end{equation}
for all $K\in \Tt$.
\end{proposition}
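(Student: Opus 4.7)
The plan is to prove existence by recognizing \eqref{eq:discretePDE}--\eqref{eq:rhoG} as the Euler--Lagrange equation of a convex minimization problem in the rescaled unknown $u_K\coloneqq \theta^{n+1/2}_K/\pi_K$, and then to read the remaining properties directly off this reformulation. In these new variables the SQRA flux becomes simply $F^{n+1/2}_{K\sigma}=\frac{\pi_\sigma}{d_\sigma}(u_K-u_L)$, and if $\widetilde\Xi(a,\cdot)$ denotes any primitive of the convex nondecreasing map $\Xi(a,\cdot)$ of \eqref{eq:GH}, the system rewrites as the first-order optimality condition for
\[
J(\bs u)
\coloneqq
\sum_{K\in\Tt}\frac{m_K}{\pi_K}\widetilde\Xi(\rho^n_K,\pi_K u_K)
+\frac{\tau}{2}\sum_{\sigma=K|L}m_\sigma\frac{\pi_\sigma}{d_\sigma}(u_K-u_L)^2
-\sum_K m_K\rho^n_K u_K.
\]
This functional is continuous and convex on $\R^\Tt$. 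For coercivity I split $\bs u=\bar u\bs 1+\bs w$ with $\sum m_K w_K=0$: discrete Poincaré--Wirtinger on the connected mesh controls $\bs w$ via the quadratic term, the assumption $\sum m_K\rho^n_K>0$ handles $\bar u\to-\infty$ via the linear term, and the asymptotic $\Xi(a,c)\sim ec$ as $c\to+\infty$ gives $\widetilde\Xi$ super-linear growth which takes care of $\bar u\to+\infty$. The direct method then delivers a minimizer, hence a solution $(\btheta^{n+1/2},\brho^{n+1})$.

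Mass conservation follows immediately by summing \eqref{eq:discretePDE} over $K$ and invoking conservativity $\bs F^{n+1/2}\in\mathbb F_\Sigma$. Once strict positivity $\theta^{n+1/2}_K>0$ is established, the bound $\rho^{n+1}_K=\Xi(\rho^n_K,\theta^{n+1/2}_K)\geq 0$ is immediate from $g\geq 0$ on $\R$. To prove strict positivity I run a discrete minimum principle: at a cell $K_0$ realizing $u_{K_0}=\min_K u_K$ the scheme rearranges as
\[
\Xi(\rho^n_{K_0},\pi_{K_0}u_{K_0})-\rho^n_{K_0}
=-\frac{\tau}{m_{K_0}}\sum_{\sigma\in\Sigma_{K_0}}m_\sigma\frac{\pi_\sigma}{d_\sigma}(u_{K_0}-u_L)\geq 0.
\]
If $\rho^n_{K_0}>0$ the structure of $\Xi$ forces $\pi_{K_0}u_{K_0}>e^{-1}\rho^n_{K_0}>0$. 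If instead $\rho^n_{K_0}=0$ the inequality only gives $u_{K_0}\geq 0$, but the case of equality would propagate $u_L=u_{K_0}=0$ to every neighbor of $K_0$ and, by mesh connectivity, to every cell; this would yield $\btheta^{n+1/2}\equiv 0$ and therefore $\brho^{n+1}\equiv 0$, contradicting mass conservation together with $\sum m_K\rho^n_K>0$.

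For uniqueness I consider two solutions, subtract the schemes and test the difference against $\delta u_K=(\theta^{(1)}_K-\theta^{(2)}_K)/\pi_K$; a discrete integration by parts gives
\[
\sum_K m_K\,\delta\rho_K\,\delta u_K+\tau\sum_{\sigma=K|L}m_\sigma\frac{\pi_\sigma}{d_\sigma}(\delta u_K-\delta u_L)^2=0.
\]
Monotonicity of $\Xi(\rho^n_K,\cdot)$ makes the first sum termwise nonnegative, so both blocks vanish. The quadratic part yields $\delta\bs u\equiv c\bs 1$, and since $\Xi(\rho^n_K,\cdot)$ is either identically zero on $(-\infty,e^{-1}\rho^n_K]$ or strictly increasing, a short case analysis combined with mass conservation forces $c=0$. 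The sandwich bound \eqref{eq:lower_upper_theta} is finally a direct consequence of Lemma~\ref{lem:upper_chain_rule}: the upper estimate is just a rewriting of $\Xi(a,c)\geq 2c-a$ applied to $(a,c)=(\rho^n_K,\theta^{n+1/2}_K)$, while the lower estimate is $\Theta(a,b)\geq \min(a,b)$ used whenever $\theta^{n+1/2}_K\geq e^{-1}\rho^n_K$ (so that $\theta^{n+1/2}_K=\Theta(\rho^n_K,\rho^{n+1}_K)$) and is trivial in the complementary case since then $\rho^{n+1}_K=0\leq\theta^{n+1/2}_K$.

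The main technical hurdle is the strict positivity of $\btheta^{n+1/2}$: the functional $J$ fails to be strictly convex on the flat region $\Xi=0$, so this property cannot be inferred from the minimization principle alone and genuinely requires the minimum-principle/propagation argument driven by the strict positivity of the initial mass.
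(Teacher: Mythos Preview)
Your proof is correct and follows essentially the same architecture as the paper: variational existence via the convex functional in the variable $u_K=\theta^{n+1/2}_K/\pi_K$, mass conservation by summation, strict positivity via a discrete minimum principle with propagation, and the sandwich bounds from Lemma~\ref{lem:upper_chain_rule}. The two places where you deviate are minor but worth noting. For coercivity you invoke a Poincar\'e--Wirtinger splitting, whereas the paper observes more directly that each cellwise term $J_K$ (primitive of $s\mapsto \Xi(\rho^n_K,s\pi_K)-\rho^n_K$) is individually coercive regardless of $\rho^n_K$, which makes the decomposition unnecessary. For uniqueness the paper stays variational: it argues that mass conservation forces $\theta^{n+1/2}_K>e^{-1}\rho^n_K$ for at least one cell at any minimizer, so the functional is locally strictly convex there and hence globally has a unique minimizer. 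Your subtract-and-test monotonicity argument is an equally valid PDE-style alternative; it avoids appealing to the variational structure a second time, at the price of the small case analysis on the flat region of $\Xi$. Both routes ultimately hinge on the same obstruction, namely that $\brho^{n+1}\equiv 0$ is incompatible with mass conservation.
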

Note in particular that our scheme is positivity and mass preserving.
\begin{proof}
Recall that on can view \eqref{eq:discretePDE}--\eqref{eq:rhoG} as a single equation for $\btheta^{n+1/2}$.
Changing variables $s_K = \theta^{n+1/2}_K/\pi_K$ for all $K\in\Tt$, it is easy to see that the former problem is equivalent to finding a critical point of % $\frac{\partial}{\partial \bs s}J(\bs s)=0$ of
\begin{equation}\label{eq:mins}
\mc J(\bs s)
\coloneqq
\frac 1\tau \sum_{K\in\Tt} m_K J_K(s_K)
+\,\sum_{\sigma \in \Sigma_\text{int}} \frac{m_\sigma \pi_\sigma }{2 d_\sigma} | s_K - s_L|^2,
 \qquad \bs s\in\R^\Tt,
\end{equation}
where $J_K(\cdot)$ is any primitive of the function $s \mapsto \Xi(\rho^{n}_K, s \pi_K) - \rho^n_K$.
Note that $\mc J$ is $C^1$ and convex.
Hence critical points are necessarily global minima, and by compactness at least one minimum exists.
(By definition of $\Xi$ it is not difficult to check that all the $J_K$'s are coercive, regardless of the particular value of $\rho^n_K$)

Let $\bs s$ be any minimizer and write $\btheta^{n+1/2},\bs F^{n+1/2},\bs \rho^{n+1}$ for the corresponding auxiliary variables.
Summing~\eqref{eq:discretePDE} over $K\in\Tt$ immediately guarantees mass conservation
\begin{equation}\label{eq:mass}
\sum_{K\in\Tt} m_K \rho_K^{n+1} = \sum_{K\in\Tt} m_K \rho_K^{n} >0\,.
\end{equation}
This implies that, for any minimizer $\bs{s}$, there exists a $K\in \Tt$ such that ${\theta}_K^{n+1/2} =  s_K \pi_K>e^{-1}\rho^n_K\geq 0$.
For if not, then $\rho^{n+1}_K\leq 0$ for all $K\in \Tt$ by definition \eqref{eq:GH} of $\Xi$, which in turn would contradict \eqref{eq:mass}.
As a consequence for any minimizer $\bs{s}$ at least one of the $J_K$'s is strictly convex in a neighborhood of $s_K$.
This improved convexity in at least one direction suffices to compensate for the lack of strict convexity of the discrete Dirichlet energy in \eqref{eq:mins}, and $\mc J$ is thus strictly convex in the neighborhood of $\bs s$.
Since $\mc J$ is also globally convex, this proves existence and uniqueness of $\bs \theta^{n+1/2}$ as claimed.

In order to show that $\btheta^{n+1/2}\geq 0$, set $K^* = \operatorname{argmin}\limits_K \theta_K^{n+1/2}/\pi_K$.
Then by \eqref{eq:discretePDE}--\eqref{eq:fluxes.linear}
\begin{equation}
\label{eq:min_K*}
\Xi(\rho^n_{K^*}, \theta_{K^*}^{n+1/2})
=\rho^{n+1}_{K^*}
=\rho^n_{K^*} + \frac{\tau}{m_{K^*}}\sum_{\sigma \in\Sigma_K}\frac{ m_\sigma}{d_\sigma}  \pi_\sigma \left(\frac{\theta^{n+1/2}_L}{\pi_L} - \frac{\theta^{n+1/2}_{K^*}}{\pi_{K^*}}\right)
\geq \rho^n_{K^*}
\end{equation}
From this we see that if $\rho^n_{K^*}=0$ then by \eqref{eq:GH} $e\theta_{K^*}^{n+1/2}=\Xi(0,\theta_{K^*}^{n+1/2})\geq 0$.
If now $\rho^n_{K^*}>0$ and $\theta_{K^*}^{n+1/2}\leq 0$ then, again by definition of $\Xi$, we would have that $\Xi(\rho^n_{K^*},\theta_{K^*}^{n+1/2})= 0$ \eqref{eq:min_K*}, and this would contradict \eqref{eq:min_K*} since $\rho^n_{K^*}>0$.
Whence $\theta_{K^*}^{n+1/2} \geq 0$, and therefore $\theta_K^{n+1/2} \geq 0$ for all $K\in \Tt$.

Let us now implement a strong maximum principle-type argument in order to improve this nonnegativity to strict positivity.
Assuming by contradiction that $\theta^{n+1/2}_{K^*}=0$, we see that $\rho^{n+1}_{K^*}=\Xi(\rho^n_{K^*},0) =0$, and evaluating \eqref{eq:discretePDE} for $K=K^*$ yields
\[
0\geq
m_K\frac{0-\rho^{n}_{K^*}}{\tau}
=
\sum_{\sigma \in \Sigma_{K^*}} \frac{m_\sigma \pi_\sigma }{d_\sigma } \left(\frac{\theta_L^{n+1/2}}{\pi_L}-0\right) \geq 0.
\]
This would imply $\theta^{n+1/2}_L =0$ for all $L\in\Tt$ sharing a facet with $K$, thus also $\rho^{n+1}_L=0$ since $\Xi(\rho^n_L,0)=0$ always.
Propagating from neighboring cell to neighboring cell we would conclude that $\bs\rho^{n+1}\equiv 0$, which would in turn contradict the mass conservation \eqref{eq:mass}.
Hence, we have shown that $\theta^{n+1/2}_K>0$ for all $K\in\Tt$, and as a consequence $\rho^{n+1}_K = H(\rho^n_K,\theta^{n+1}_K)\geq 0$, yet again by definition \eqref{eq:GH} of $\Xi$.

Let us finally establish the bounds \eqref{eq:lower_upper_theta} for $\theta^{n+1/2}$.
Since $\theta^{n+1/2}_K>0$, clearly the lower bound $\theta^{n+1/2}_K\geq \min(\rho^{n+1}_K,\rho^n_K)$ only needs to be checked when both $\rho^{n+1}_K,\rho^n_K>0$.
However, in this case we are necessarily in the ``invertibility regime'' $\theta^{n+1/2}_K = \Theta(\rho^{n+1}_K,\rho^n_K)$, and the claim immediately follows from the first bound in \eqref{eq:bounds_Theta_Xi}.
The second bound in \eqref{eq:bounds_Theta_Xi} also gives
$\frac 12[\rho^n_K + \rho^{n+1}_K)]
=\frac 12[\rho^n_K + \Xi(\rho^{n}_K,\theta^{n+1/2}_K)]
\geq \frac 12[\rho^n_K + (2\theta^{n+1/2}_K-\rho^n_K)]=\theta^{n+1/2}_K
$
and the proof is complete.
\end{proof}
% % % % % % % % % % % % % % % % % % % % % % % % % % % % % % % % % % % % % % % % % % % % % % % % %

\subsection{Discrete energy dissipation equality}\label{ssec:dissip}
For any nonnegative discrete density $\brho\geq 0$ we define the discrete total energy of the system
\[
\E _{\Tt}(\brho)
\coloneqq \mc{H}_{\Tt}(\brho) + \sum_{K\in \Tt} m_K V_K \rho_K\,, \quad \text{where} \quad \mc{H}_{\Tt}(\brho) \coloneqq \sum_K m_K H(\rho_K)\,.
\]
In this section we show that the solutions of our scheme satisfy a fully discrete energy dissipation inequality with respect to the discrete energy $\mc{H}$.
We will strongly rely on the following convex real-valued conjugate functions
\[
\psi(z) = 2 z \, \mathrm{arcsinh}(z/2) - 2\sqrt{4+z^2} +4\,,\quad \psi^*(\xi) = 4(\cosh(\xi/2)-1)\,,
\]
which emerge naturally in (electro-)chemistry \cite{FC67, MPP17, CCMRV23},  large deviations of
jump processes~\cite{MPR14}, multi-scale limits of diffusion processes~\cite{LMPR17, FL21}, and more \cite{PS23}.
Note in particular that for any $a,b>0$ we have identity
\[
\sqrt{ab} (\psi^*)'(\log a - \log b) = a-b\,.
\]
This allows to recast the SQRA fluxes $\bs{F}^{n+1/2}$ in \eqref{eq:fluxes.linear} as
\begin{equation}\label{eq:fluxes}
F_{K\sigma}^{n+1/2}
=
\frac{\theta_\sigma^{n+1/2}}{d_\sigma} (\psi^*)'\left( \log\left(\frac{\theta^{n+1/2}_K}{\pi_K}\right)-\log\left(\frac{\theta^{n+1/2}_L}{\pi_L}\right) \right)
\end{equation}
with \[ \theta_\sigma^{n+1/2} \coloneqq \sqrt{\theta_K^{n+1/2} \theta_L^{n+1/2}}.\]
Next, observe from the critical upper-chain rule \eqref{eq:upper_chain_rule} and $\rho^{n+1}_K=\Xi(\rho^n_K,\theta^{n+1/2}_K)$ that
\begin{equation}
\label{eq:thetaineq}
H(\rho^{n+1}_K) - H(\rho^n_K)\leq \log(\theta^{n+1/2}_K) (\rho^{n+1}_K - \rho^n_K) ,
\qquad \forall K\in \Tt,n\geq 0.
\end{equation}
Adding $V_K(\rho^{n+1}_K - \rho^n_K)=-\log \pi_K (\rho^{n+1}_K - \rho^n_K)$ on both sides, multiplying by $m_K$, and denoting for convenience
\[
\phi^{n+1/2}_K \coloneqq \log\left(\frac{\theta^{n+1/2}_K}{\pi_K}\right),
\]
we find
\begin{equation}
\label{eq:E-E<D}
\begin{aligned}
\E _{\Tt}(\brho^{n+1}) -  \E _{\Tt}(\brho^n)
&
=
\sum_K m_K \left[H(\rho^{n+1}_K) - H(\rho^n_K)\right] - \sum_K m_K \log \pi_K\left[\rho^{n+1}_K - \rho^n_K\right]
\\
& \overset{\eqref{eq:thetaineq}}{\leq }\sum_K m_K \log\left(\frac{\theta^{n+1/2}_K}{\pi_K}\right) (\rho^{n+1}_K - \rho^n_K)
\\
& \overset{\eqref{eq:fluxes}}{=}- \tau \sum_K \sum_{\sigma\in \Sigma_K} \frac{m_\sigma}{d_\sigma} \phi^{n+1/2}_K \theta_\sigma^{n+1/2} (\psi^*)'\left(\phi^{n+1/2}_K - \phi^{n+1/2}_L\right)
\\
& = - \tau \sum_{\sigma\in \Sigma} \frac{m_\sigma\theta_\sigma^{n+1/2}}{d_\sigma} \left(\phi^{n+1/2}_K - \phi^{n+1/2}_L\right)  (\psi^*)'\left(\phi^{n+1/2}_K - \phi^{n+1/2}_L\right),
\end{aligned}
\end{equation}
where we used the fact that the function $(\psi^*)'$ is odd in the last equality.
Let us define, for all $\brho\in \mathbb{R}^\Tt_+$ and $\bs{F},\bs{\xi}  \in\mathbb{F}_{\Tt}$,
\[
\mc{D}_\psi(\brho,\bs{F}) \coloneqq \sum_{\sigma \in \Sigma} \frac{m_\sigma \rho_\sigma}{d_\sigma} \psi \left( \frac{d_\sigma F_\sigma}{\rho_\sigma}  \right)\,,
\qquad
\mc{D}_{\psi}^*(\brho,\bs{\xi}) \coloneqq \sum_{\sigma \in \Sigma} \frac{m_\sigma \rho_\sigma}{d_\sigma} \psi^* (d_\sigma \xi_\sigma) \,,
\]
where as before $\rho_\sigma = \sqrt{\rho_K \rho_L}$.
By definition $\mc{D}_{\psi}^*(\brho,\cdot)$ is nothing but the Legendre transform of $\mc{D}_\psi(\brho,\cdot)$ with respect to the pairing
\[
\langle \bs{\xi},\bs{F}\rangle_\Sigma = \sum_{\sigma \in \Sigma} m_\sigma d_\sigma \xi_{K\sigma}F_{K\sigma} \,.
\]
We also define
\begin{equation}
\label{eq:def_Rpsi}
\mathcal{R}_\psi(\brho) \coloneqq \mc{D}_{\psi}^*\left(\brho,-\nabla_\Sigma \bs{\phi} \right) \,,
\end{equation}
with $\bs{\phi}\in \mathbb{R}^\Tt$ and $\nabla_\Sigma \bs{\phi} \in \mathbb{F}_\Sigma$ given by
\[
\phi_K\coloneqq \log\left(\frac{\rho_K}{\pi_K}\right)\,,
\qquad
(\nabla_\Sigma \bs{\phi} )_{K\sigma} \coloneqq \frac{\phi_L-\phi_K}{d_\sigma}\,.
\]
With these definitions, the calculations above imply altogether:
\begin{proposition}\label{prop:ede}
Any discrete solution satisfies the one-step discrete EDI
\begin{equation}\label{eq:discreteedi}
\E _{\Tt}(\brho^{n+1})  +\tau  \mc{D}_\psi(\btheta^{n+1/2},\bs{F}^{n+1/2}) + \tau \mathcal{R}_\psi(\btheta^{n+1/2})
\leq
\E _{\Tt}(\brho^n),
\end{equation}
and equality holds if $\theta^{n+1/2}_K \geq e^{-1} \rho^n_K$ for all $K\in\Tt$.
\end{proposition}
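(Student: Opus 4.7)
The plan is to reuse the chain of (in)equalities already assembled in \eqref{eq:E-E<D}: combining the discrete continuity equation \eqref{eq:discretePDE}, the upper chain rule \eqref{eq:upper_chain_rule}, and the SQRA representation \eqref{eq:fluxes} yields
\begin{equation*}
\E_\Tt(\brho^{n+1}) - \E_\Tt(\brho^n) \;\leq\; -\tau \sum_{\sigma \in \Sigma} \frac{m_\sigma \theta_\sigma^{n+1/2}}{d_\sigma}\, \Delta_\sigma\, (\psi^*)'(\Delta_\sigma),
\end{equation*}
where I set $\Delta_\sigma \coloneqq \phi_K^{n+1/2} - \phi_L^{n+1/2}$ on each $\sigma = K|L$ (unambiguous because $\psi^*$ is even, and well-defined thanks to the strict positivity $\theta_K^{n+1/2} > 0$ given by Proposition \ref{prop:existence}). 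All that remains is to identify the summand on the right as the integrand of $\tau [\mc D_\psi(\btheta^{n+1/2},\bs F^{n+1/2}) + \mc R_\psi(\btheta^{n+1/2})]$.

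The decisive tool is a pointwise Fenchel--Young equality
\[
z\,(\psi^*)'(z) \;=\; \psi^*(z) \;+\; \psi\bigl((\psi^*)'(z)\bigr), \qquad z\in\R,
\]
which holds as an equality here because $\psi^*$ is smooth and strictly convex. Weighting by $m_\sigma \theta_\sigma^{n+1/2}/d_\sigma$ and summing over facets splits the right-hand side into two contributions. The term carrying $\psi^*(\Delta_\sigma)$ matches $\mc R_\psi(\btheta^{n+1/2})$ after invoking \eqref{eq:def_Rpsi}, the relation $d_\sigma(\nabla_\Sigma \bs\phi)_{K\sigma} = -\Delta_\sigma$, and evenness of $\psi^*$. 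The term carrying $\psi((\psi^*)'(\Delta_\sigma))$ matches $\mc D_\psi(\btheta^{n+1/2},\bs F^{n+1/2})$ after inserting $(\psi^*)'(\Delta_\sigma) = d_\sigma F_{K\sigma}^{n+1/2}/\theta_\sigma^{n+1/2}$ read off from \eqref{eq:fluxes} and using evenness of $\psi$.

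For the equality statement, I trace back the single non-equality in the derivation: it is the upper chain rule \eqref{eq:upper_chain_rule}, invoked cell-by-cell in \eqref{eq:thetaineq} and \eqref{eq:E-E<D}. By Lemma \ref{lem:upper_chain_rule}, \eqref{eq:upper_chain_rule} is in fact an equality whenever $\theta_K^{n+1/2} \geq e^{-1}\rho_K^n$; since every other manipulation (Fenchel--Young, re-indexing by facets, SQRA substitution) is an exact identity, the EDI \eqref{eq:discreteedi} turns into an equality under that hypothesis. I do not anticipate any real obstacle in this proof: the conceptual content has been packaged into the upper chain rule of Lemma \ref{lem:upper_chain_rule} (where the non-variational geometry of $\Xi$ is crucial) and into the dual $(\psi,\psi^*)$ representation of the SQRA flux. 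Once these are granted, the EDI is essentially bookkeeping, combining a telescoping sum in time with Legendre duality on each mesh facet.
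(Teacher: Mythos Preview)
Your proof is correct and follows essentially the same route as the paper: both start from \eqref{eq:E-E<D} and then split the sum via the Fenchel--Young equality, the paper phrasing it as $\langle -\nabla_\Sigma\bs\phi^{n+1/2},\bs F^{n+1/2}\rangle = \mc D_\psi(\btheta^{n+1/2},\bs F^{n+1/2}) + \mc D_\psi^*(\btheta^{n+1/2},-\nabla_\Sigma\bs\phi^{n+1/2})$ at the level of the functionals, while you do it pointwise on each facet. The equality case is handled identically by tracing back to Lemma~\ref{lem:upper_chain_rule}.
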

\begin{proof}
Leveraging the expression \eqref{eq:fluxes} for the fluxes $F_{K\sigma}$, we obtain from \eqref{eq:E-E<D}
\begin{equation*}
\begin{aligned}
\E _{\Tt}(\brho^{n+1}) -  \E _{\Tt}(\brho^n)
& \leq- \tau \sum_{\sigma\in \Sigma} m_\sigma d_\sigma \frac{\phi^{n+1/2}_K - \phi^{n+1/2}_L}{d_\sigma} F_{K\sigma}^{n+1/2}
\\
& =
-\tau\langle -\nabla_\Sigma \bs{\phi}^{n+1/2},\bs{F}^{n+1/2}\rangle
\\
& =-   \tau \mc{D}_{\psi}\left(\btheta^{n+1/2},\bs{F}^{n+1/2} \right) - \tau \mc{D}_{\psi}^*\left(\btheta^{n+1/2},-\nabla_\Sigma \bs{\phi}^{n+1/2} \right)\,.
\end{aligned}
\end{equation*}
 In the last equality we simply used the equality case in the $\mc{D}_\psi,\mc{D}_\psi^*$ Fenchel duality, which stands owing to $F_{K\sigma}^{n+1/2}=\frac{\theta_\sigma^{n+1/2}}{d_\sigma} (\psi^*)'\left( -(\nabla_\Sigma \bs{\phi} )_{K\sigma} \right)$ in \eqref{eq:fluxes}.
\end{proof}
\begin{remark}\label{rem:variational}
By analogy with the continuous setting, and similarly to \cite{JST19,stefanelli2022new},
an alternative scheme could consist in defining recursively $\tilde{\bs{\rho}}^{n+1}$ as a solution to the following variational problem:
\begin{equation}\label{eq:variational}
\tilde{\brho}^{n+1} \in
\underset{{\brho\geq \bs{0}}}{\operatorname{argmin}} \inf_{\bs{F}}\{ \mc{E}_{\Tt}(\brho)  +\tau  \mc{D}_\psi(\btheta,\bs{F}) + \tau \mathcal{R}_\psi(\btheta) \},
\end{equation}
in which the continuity equation $
m_K \frac{\rho_K - \tilde{\rho}^n_K}{\tau} + \sum_{\sigma \in\Sigma_K} m_\sigma F_{K\sigma} = 0$ is imposed as a constraint and $\btheta = \Theta(\tilde{\brho}^n,\brho)$.
Note that this problem admits indeed minimizers since, by the same calculations as above, we always have that the function minimized in \eqref{eq:variational} is bounded from below by $\mc{E}_{\Tt}(\tilde{\brho}^n)$, and the set of admissible discrete densities is compact.
Note also that, discarding $\mathcal{R}_\psi(\btheta)$, one is left with a discretized version of the classical JKO scheme.

In general, the solution obtained via \eqref{eq:variational} is different from the solution $(\bs{\rho}^{n})_n$ obtained using our scheme.
In fact, due to \eqref{eq:rhoG}, we may have $\btheta^{n+1/2} \neq \Theta(\brho^n,\brho^{n+1})$ if $\bs{\rho}^{n+1}$ is not strictly positive.
On the other hand, if and whenever our scheme outputs $\btheta ^{n+1/2} \geq e^{-1} \brho^n$, then the invertibility $\brho^{n+1}=\Xi(\brho^n,\btheta^{n+1/2})\Leftrightarrow\btheta^{n+1/2}=\Theta(\brho^n,\brho^{n+1})$ holds and the equality holds in \eqref{eq:discreteedi}.
As a consequence $\bs\rho^{n+1}$ solves \eqref{eq:variational} with $\tilde{\brho}^n = \brho^n$, since it realizes the lower bound $\mc{E}_{\Tt}(\bs{\rho}^{n})$.
Our scheme is somehow ``almost variational'', in the sense that it is locally variational except in those situations when entropy releases occur due to equality failure in \eqref{eq:upper_chain_rule}.
The significant advantage of using our scheme is that the optimality conditions for \eqref{eq:variational} are much harder to manage than the system \eqref{eq:discretePDE}--\eqref{eq:rhoG}, and both the theoretical analysis and numerical implementation for \eqref{eq:variational}  become more intricate.
\end{remark}

Starting from the expression of $\psi^*$, easy algebra allows to recast the discrete Fisher functional \eqref{eq:def_Rpsi} as
\begin{equation}\label{eq:Dpsistar}
\mc{R}_{\psi}(\brho) = 2 \sum_{\sigma \in \Sigma} \frac{m_\sigma\pi_\sigma}{d_\sigma} \left|\sqrt{\frac{\rho_K}{\pi_K}} - \sqrt{\frac{\rho_L}{\pi_L}}\right|^2 \,.
\end{equation}
Clearly this is a consistent approximation of the dissipation rate
\begin{equation}\label{eq:dissipationrate}
2 \int_\Omega  \pi\left| \nabla \sqrt{\frac{\rho}{\pi}} \right|^2 \,
=
\frac 12\int_\Omega \rho \left| \nabla \log \frac{\rho}{\pi} \right|^2
\end{equation}
appearing in \eqref{eq:EDE.0}.

In order to gain compactness in the next section we exploit Proposition~\ref{prop:ede} to retrieve uniform bounds for the discrete curves $(\brho^n)_n$, $(\bs F^{n+1/2})_n$, $(\btheta^{n+1/2})_n$.

\begin{lemma}\label{lem:entropybounds}
There exists a constant $C>0$ only depending on $m_\Omega$, $\mc{H}(\rho^0)$, $(\max(V) - \min(V))$, and the total mass $\rho^0[\Omega]$, such that
\[
\sum_{n=0}^{N-1} \tau \left[\mc{D}_\psi(\btheta^{n+1/2},\bs{F}^{n+1/2}) + \mc{R}_{\psi}(\btheta^{n+1/2})\right] \leq C
\]
and
\[
\sup\limits_{0\leq n\leq  N-1}\left(
\mc{H}_{\Tt}(\brho^{n+1}) + \mc{H}_{\Tt}(\btheta^{n+1/2})\right) \leq C.
\]
\end{lemma}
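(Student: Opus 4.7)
The plan is to telescope the one-step energy-dissipation inequality \eqref{eq:discreteedi} from Proposition~\ref{prop:ede} over $n=0,\dots,N-1$. Since the two dissipations $\mc{D}_\psi$ and $\mc{R}_\psi$ are nonnegative, summation yields
\[
\E_{\Tt}(\brho^{n+1}) + \sum_{k=0}^{n} \tau\bigl[\mc{D}_\psi(\btheta^{k+1/2},\bs{F}^{k+1/2}) + \mc{R}_\psi(\btheta^{k+1/2})\bigr] \leq \E_{\Tt}(\brho^0),
\]
and the game reduces to (i) bounding $\E_\Tt(\brho^0)$ from above in terms of the continuous data, and (ii) bounding $\E_\Tt(\brho^{n+1})$ from below uniformly in $n$.

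For (i): since $\rho^0_K$ is by \eqref{eq:Vpirho0} the cell average of $\rho^0$ on $K$, Jensen's inequality applied to the convex function $H$ gives $m_K H(\rho^0_K) \leq \int_K H(\rho^0)$, hence $\mc{H}_\Tt(\brho^0)\leq\mc{H}(\rho^0)<\infty$; the potential part is trivially bounded by $\|V\|_\infty\,\rho^0[\Omega]$. For (ii): the inequality $H\geq 0$ yields $\mc{H}_\Tt(\brho^{n+1})\geq 0$, while the mass conservation established in Proposition~\ref{prop:existence} controls the potential term by $\|V\|_\infty\,\rho^0[\Omega]$. Combining these lower/upper bounds yields the claimed bound on the cumulative dissipation, and rearranging gives at the same time a uniform upper bound on $\mc{H}_\Tt(\brho^{n+1})$.

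For the bound on $\mc{H}_\Tt(\btheta^{n+1/2})$, I would invoke the two-sided estimate \eqref{eq:lower_upper_theta}, which in particular forces $0\leq\theta^{n+1/2}_K\leq \tfrac{1}{2}(\rho^n_K+\rho^{n+1}_K)$. Since $H$ is convex on $\R_+$ with $H(0)=1$, its value on the interval $[0,\tfrac{1}{2}(\rho^n_K+\rho^{n+1}_K)]$ is bounded by the maximum of its endpoint values, and by convexity once more
\[
H(\theta^{n+1/2}_K)\leq \max\bigl(1,\,H(\tfrac{1}{2}(\rho^n_K+\rho^{n+1}_K))\bigr)\leq 1+\tfrac{1}{2}\bigl(H(\rho^n_K)+H(\rho^{n+1}_K)\bigr).
\]
Summing over $K$ gives $\mc{H}_\Tt(\btheta^{n+1/2})\leq m_\Omega + \tfrac{1}{2}\bigl(\mc{H}_\Tt(\brho^n)+\mc{H}_\Tt(\brho^{n+1})\bigr)$, and one concludes using the uniform control on $\mc{H}_\Tt(\brho^n)$ just obtained.

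There is no serious obstacle once Proposition~\ref{prop:ede} is in hand; the argument is essentially a bookkeeping exercise on the telescoped EDI. The only small subtlety is the handling of possible vacuum ($\theta^{n+1/2}_K$ arbitrarily close to zero), which is precisely accounted for by the $H(0)=1$ contribution responsible for the additive $m_\Omega$ in the convex envelope bound.
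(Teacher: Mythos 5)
Your proof is correct and follows essentially the same route as the paper: telescoping the one-step EDI of Proposition~\ref{prop:ede}, Jensen's inequality for $\mc{H}_{\Tt}(\brho^0)\leq\mc{H}(\rho^0)$, nonnegativity of $H$ plus mass conservation to control the energies, and the convexity estimate $H(\theta^{n+1/2}_K)\leq 1+\tfrac12\bigl(H(\rho^n_K)+H(\rho^{n+1}_K)\bigr)$ combined with \eqref{eq:lower_upper_theta} for the intermediate densities. The only cosmetic difference is that bounding the two potential terms separately gives a constant depending on $\|V\|_\infty$ rather than on the oscillation $\max V-\min V$ as stated; pairing the two potential terms through mass conservation (or normalizing $\min V=0$, which leaves the scheme unchanged) recovers the claimed dependence.
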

\begin{proof}
Summing Proposition~\ref{prop:ede} over $n$ we get
\begin{multline*}
\sum_{n=0}^{N-1}\tau\left[\mc{D}_\psi(\btheta^{n+1/2},\bs{F}^{n+1/2}) + \mc{R}_{\psi}(\btheta^{n+1/2})\right]
 \overset{\eqref{eq:discreteedi}}{\leq} \E _{\Tt}(\brho^0) - \E _{\Tt}(\brho^N)
\\
 =  \left[\mc{H}_{\Tt}(\brho^0) +\sum_K m_KV_K\rho^0_K\right]- \left[\mc{H}_{\Tt}(\brho^N) +\sum_K m_KV_K\rho^N_K\right]
 \\
 \leq \mc{H}(\rho^0)   + (\max(V) - \min(V)) \int_\Omega \rho^0,
\end{multline*}
where in the last inequality we used successively Jensen's inequality to bound $\mc{H}_{\Tt}(\brho^0) \leq \mc{H}(\rho^0)$, $\mc H_{\mc T}(\bs\rho^N)\geq 0$, and the mass conservation $\sum_Km_k\rho^N_K=\sum_Km_k\rho^0_K=\int_\Omega\rho^0$.
\\
For the bound on $\mc{H}_{\Tt}(\brho^{n+1})$, note first that $\mc D_\psi,\mc R_\psi\geq 0$ in \eqref{eq:discreteedi} and therefore
$$
\E _{\mc T}(\bs \rho^{n+1})
\leq
\E _{\mc T}(\bs \rho^{n})
\leq\dots\leq
\E _{\mc T}(\bs \rho^{0}).
$$
This gives similarly
$$
\begin{aligned}
\mc{H}_{\Tt}(\brho^{n+1})
& =
\E _{\Tt}(\brho^{n+1}) - \sum m_K\rho^{n+1}_K V_K
\\
& \leq
\E _{\Tt}(\brho^{0}) - \sum m_K\rho^{n+1}_K V_K
\\
& =\mc{H}_{\Tt}(\brho^{0})+  \sum m_K\rho^{0}_K V_K - \sum m_K\rho^{n+1}_K V_K
\\
& \leq
\mc{H}(\rho^{0})+  (\max V-\min V)\rho^0[\Omega].
\end{aligned}
$$
As for the bound on $\mc{H}_{\Tt}(\btheta^{n+1/2})$, let us first recall the elementary but useful property of the entropy function
\begin{equation}
 \label{eq:H(c)leq}
 H(c)\leq 1+\frac 12 [H(a)+H(b)]
 \qquad \text{for }a,b,c\geq 0,\quad c\leq \frac{a+b}{2}
\end{equation}
(For $c\leq 1$ one has trivially $H(c)\leq H(0)=1$, while for $c\geq 1$ one can simply use the monotonicity $H(c)\leq H(a+b/2)$ and conclude by convexity.)
Owing to $\theta^{n+1/2}_K\leq \frac 12(\rho^n_K +\rho^{n+1}_K)$ from Proposition~\ref{prop:existence}, we get
\[
\begin{aligned}
\mc{H}_{\Tt}(\btheta^{n+1/2})
&=
\sum_{K} m_K H(\theta^{n+1/2}_K)
\\
&\leq
\sum_{K} m_K \left[1+\frac 12 \left(H(\rho^{n}_K)+H(\rho^{n+1}_K)\right)\right]
\\
& = m_\Omega +  \frac{1}{2} \left[ \mc{H}_{\Tt}(\brho^n) + \mc{H}_{\Tt}(\brho^{n+1})\right] \,,
\end{aligned}
\]
and the previous uniform bound on $\mc{H}_{\Tt}(\brho^n)$ concludes the proof.
\end{proof}

% % % % % % % % % % % % % % % % % % % % % % % % % % % % % % % % % % % % % % % % % % % % %
\section{Convergence via the energy dissipation equality}\label{sec:convergence}
In this section we establish the convergence of the discrete solutions associated with a sequence of meshes $\Tt$ and time steps $\tau$, in the limit $\tau, \size(\Tt)\rightarrow 0$, where
\[
\size(\Tt) \coloneqq \max\{\operatorname{diam}(K) \,; \, K \in\Tt\}\,.
\]
For technical reasons that will appear later in the proofs, we require the sequence of meshes to satisfy some asymptotic isotropy condition inspired from~\cite{gladbach2020scaling}, up to some subset of vanishing $d$-dimensional Lebesgue measure as in~\cite{DN18}. 
We further have to assume some CFL-type condition, cf. \eqref{eq:CFL} in what follows, which for quasi-uniform meshes would simply write $\tau = o(\size(\Tt))$.

Throughout the section we will denote by $Q_T\coloneqq [0,T]\times \Omega$ the space-time domain.

\subsection{Assumptions on the sequence of meshes}\label{ssec:mesh2}
Our convergence result relies on the following assumptions on the sequence of meshes: 
\begin{enumerate}[1)]
\item mesh regularity: there exists a constant $\zeta>0$ uniform w.r.t. $\mc T$ such that
  \begin{equation}\label{eq:volumebound}
\sum_{\sigma\in\Sigma_K}\frac{m_\sigma d_\sigma}{2d} \leq \zeta m_K
\quad\text{and}\quad
\zeta^{-1} \operatorname{dist}(x_K,K) \leq \operatorname{diam}(K) \leq \zeta \min_{\sigma \in \Sigma_K} d_\sigma\, ,
 \qquad \forall \, K \in \Tt\,
  \end{equation}
  whereas 
  \begin{equation}\label{eq:dsigmah}
  d_\sigma \leq \zeta \, \size(\Tt), \qquad\forall \sigma \in \overline \Sigma.
  \end{equation} 
\item
asymptotic isotropy: there exists a subset $\Tt_{\mathrm{iso}} \subset \Tt$ and a nonnegative $\eps_{\Tt}\rightarrow 0$ as $\size(\Tt) \rightarrow 0$, such that
\begin{equation}\label{eq:isocond}
 (1-\eps_{\Tt})|v|^2 \leq ~ \frac{1}{2m_K} \sum_{\sigma \in \overline{\Sigma}_K} m_\sigma d_\sigma (v\cdot n_{K\sigma})^2 \leq (1+\eps_{\Tt})|v|^2,\qquad \forall\, K \in \Tt_{\mathrm{iso}},\,
 \forall\,v\in \mathbb{R}^d\,;
\end{equation}
moreover, denoting $\Omega_{\mathrm{iso}} \coloneqq \cup \{ K\, ;\, K \in \Tt_{\mathrm{iso}}\}$ we have
\begin{equation}\label{eq:isovanish}
\lim_{\size(\Tt) \rightarrow 0} \meas(\Omega \setminus \Omega_{\mathrm{iso}}) =0\,;
\end{equation}
  \item CFL-type condition: denoting by $\dmin= \min_{\sigma \in \Sigma} d_\sigma$, then we assume that 
  \begin{equation}\label{eq:CFL}
  \frac{\tau}{\dmin} \to 0  \quad \text{as}\; \size(\Tt) \to 0. 
  \end{equation}  
\end{enumerate}
Conditions~\eqref{eq:volumebound} and \eqref{eq:dsigmah} are satisfied by usual discretizations based on Delaunay triangulations (or dual Voronoi diagrams)
under mild regularity assumptions.
Condition~\eqref{eq:isocond} is much more restrictive.
A weighted version of condition \eqref{eq:isocond} was introduced in \cite{gladbach2020scaling} under the name of \emph{asymptotic isotropy} to study convergence of discrete optimal transport models to their continuous counterparts.
In order to ensure convergence, such weights need to be chosen consistently with the reconstruction operator mapping densities from cells to edges.
In our case, the reconstruction is defined by the map $(\theta_K,\theta_L) \mapsto \theta_\sigma = \sqrt{\theta_K\theta_L}$, and for this specific choice the isotropy assumption in \cite{gladbach2020scaling} takes precisely the form \eqref{eq:isocond}.
This condition also imposes a strong regularity requirement on the meshes.
In particular, taking $v= e_i$ with $\{e_i\}_{i=1}^d$ an orthonormal basis and summing over all $i=1,\ldots,d$, this implies
\begin{equation*}
\frac{1}{2m_K} \sum_{\sigma \in \overline{\Sigma}_K} m_\sigma d_\sigma  \leq d (1+\eps_{\Tt}),\,
\qquad\forall\,K \in \Tt_{\mathrm{iso}}.
\end{equation*}
This is verified if, at least in the limit $\size(\Tt)\rightarrow 0$, each edge $\sigma$ divides the corresponding diamond subcell in two parts of equal area $m_\sigma d_\sigma/2d$.
However, in contrast with \cite{gladbach2020scaling}, we allow the isotropy condition to fail in an asymptotically negligible volume $\Omega_{\mathrm{iso}}$, which is precisely the meaning of \eqref{eq:isovanish}.
This improved flexibility allows us to consider a practical refinement strategy and generate a sequence of meshes for which the assumption is verified; see Remark \ref{rem:subdivision} and \cite{DN18}.
As noted already in \cite{gladbach2020scaling}, condition \eqref{eq:isocond} can be obtained by requiring a stronger condition, which is usually referred to as \emph{superadmissibility} \cite{eymard2010discretization} or \emph{center of mass condition}.
Specifically, denoting by $x_\sigma$ the barycenter of the facet $\sigma$, suppose that
\begin{equation}\label{eq:super}
x_\sigma = \frac{x_K + x_L }{2}, \qquad \forall \sigma = (K|L) \in \Sigma_K
\end{equation}
Then applying Gauss's theorem to the vector fields $\langle x-x_K,e_i\rangle e_j$ for $i,j=1,\ldots,d$, we recover
\[
\frac{2}{m_K} \sum_{\sigma \in \overline{\Sigma}_K} {m_\sigma}{d_\sigma}  (x_\sigma-x_K)\otimes (x_\sigma-x_K) = \mathrm{Id}\,,
\]
which directly implies \eqref{eq:isocond} on the cell $K$. This suggests that the classical refinement procedure by subsequent subdivisions, described below in Remark \ref{rem:subdivision}, generates a sequence of meshes for which the assumption holds. 

Finally, condition~\eqref{eq:CFL} is introduced for purely technical reasons in order to guarantee that the reconstructions based on $\left(\brho^{n+1}\right)_{n\geq 0}$ and $\left(\btheta^{n+1/2}\right)_{n\geq 0}$, defined in~\eqref{eq:densreco} below, share their cluster points as $\size(\Tt) \to 0$.

\begin{remark}[Refinement by subdivision] \label{rem:subdivision}
Given a bounded polygonal set $\Omega\subset \mathbb{R}^2$, consider an admissible mesh $\Tt^0$ made of acute triangles, and subdivide each control volume by partitioning its edges using a fixed number of points and joining the corresponding points on all edges.
Choosing as cell centers the triangles' circumcenters, the superadmissibility condition \eqref{eq:super} holds for all triangles not sharing an edge with the initial partition $\Tt^0$.
Consequently, increasing the number of subdivisions yields a sequence of admissible meshes verifying the asymptotic isotropy assumption above.
\end{remark}

% % % % % % % % % % % % % % % % % % % % % % % % % % % % % % % % % % % % % % % % % % % % %

\subsection{Compactness and limit densities}\label{ssec:compact}

Let us define a reconstruction for the discrete densities and fluxes.
For the densities we define, for $0\leq n\leq N-1$,
\begin{equation}\label{eq:densreco}
\begin{aligned}
\rho_{\Tt,\tau}(t,x)
&= \rho_K^{n+1}  \quad \text{for a.e. } x\in K,~ t\in (t^{n},t^{n+1}]\,,
\\
\theta_{\Tt,\tau}(t,x)
&= \theta_K^{n+1/2}  \quad \text{for a.e. } x\in K,~ t\in (t^{n},t^{n+1}]\,,
\\
\theta_{\Sigma,\tau}(t,x)
&= \theta_\sigma^{n+1/2}  \quad \text{for a.e. } x\in \Delta_\sigma,~ t\in (t^{n},t^{n+1}]\,,
\end{aligned}
\end{equation}
where
\[
\theta^{n+1/2}_\sigma = \left\{ \begin{array}{ll}
\sqrt{\theta^{n+1/2}_K\theta^{n+1/2}_L} & \text{if } \sigma =K|L
\\
\theta_K^{n+1/2} & \text{if } \sigma = K\cap \partial \Omega
\end{array}
\right.
\,.
\]
The diamond cell $\Delta_\sigma$ corresponding to the edge $\sigma$ is a polytope included in $\Omega$, the vertices of which being $x_K$ and those of $\sigma$ if $\sigma \subset \partial \Omega$, and additionally $x_L$ if $\sigma = K|L \in \Sigma$.
Note that we do note require $\Delta_\sigma$ to be convex as $x_K$ can lie outside of $K$.

For the initial density profile $\rho^0$, which has been discretized into $\brho^0$ by~\eqref{eq:Vpirho0}, we build the approximation $\rho_\Tt^0$ defined by 
\[
\rho_\Tt^0(x) = \rho^0_K \qquad \text{for a.e. } x\in K. 
\]
Then one readily checks that $\rho_\Tt^0$ converges (strongly) in $L^1(\Omega)$ towards $\rho^0$.
We will also need a reconstruction for the terminal discrete density at $t=T$, which will be given by
\begin{equation}\label{eq:rhoTreco}
\rho^T_{\Tt,\tau}(x) = \rho^{N}_K
\qquad \text{for a.e. } x\in K\,.
\end{equation}
Finally, for the fluxes we use the following reconstruction:
\begin{equation}\label{eq:Freco}
F_{\Sigma,\tau}(t,x) = d F^{n+1/2}_{K\sigma} n_{K\sigma} \qquad \text{for a.e. } \, x \in \Delta_\sigma \,, \, t\in (t^{n},t^{n+1}],
\end{equation}
where $F^{n+1/2}_{K\sigma}$ is defined in \eqref{eq:fluxes} for $\sigma=K|L$, and $F^{n+1/2}_{K\sigma}=0$ on the boundary $\sigma \subset \partial \Omega$.
Note that this is well-defined since $F^{n+1/2}_{K\sigma} = -F^{n+1/2}_{K\sigma}$ and $ n_{K\sigma} = - n_{L\sigma}$ for $\sigma=K|L$.

By Lemma \ref{lem:entropybounds}, the total space-time entropies of $\rho_{\Tt,\tau}$ and $\theta_{\Tt,\tau}$ are uniformly bounded, i.e.\ there exists a constant $C>0$ independent of $\Tt,\tau$ such that
\begin{equation}\label{eq:bound_entropy}
\int_{Q_T} H(\rho_{\Tt,\tau}) \leq CT
\qquad\text{and}\qquad
\int_{Q_T} H(\theta_{\Tt,\tau}) \leq CT\,.
\end{equation}
Therefore, given any family of admissible meshes $\Tt^k$ and time steps $\tau^k$ with $\size(\Tt^k)\rightarrow 0$ and $\tau^k\rightarrow 0$ as $k\rightarrow \infty$, there exists $\rho,\theta \in L^1(Q_T)$ such that, up to extraction of a subsequence if needed and as $k\rightarrow \infty$,
\begin{equation}\label{eq:rhothetalimit}
\theta_{\Tt^k,\tau^k} \rightharpoonup \theta \,,\quad \rho_{\Tt^k,\tau^k} \rightharpoonup \rho \,, \quad \text{weakly in } L^1(Q_T)\,.
\end{equation}
Similarly, since the entropy of $\rho^T_{\Tt^k,\tau^k}$ is uniformly bounded, we have that there exists $\rho^T\in L^1(\Omega)$ such that, up to a further extraction as $k\rightarrow \infty$,
\begin{equation}
\label{eq:rhoTlimit}
\rho^T_{\Tt^k,\tau^k} \rightharpoonup \rho^T\,,\quad \text{ weakly in } L^1(\Omega)\,.
\end{equation}
We claim now that the entropy of $\theta_{\Sigma,\tau}$ is also uniformly bounded.
Indeed, as
\[
\theta_\sigma^{n+1/2}=\sqrt{\theta_K^{n+1/2}\theta_L^{n+1/2}}\leq \frac 12\left(\theta_K^{n+1/2}+\theta_L^{n+1/2}\right),
\]
it follows from our previous entropy bound \eqref{eq:bound_entropy} that 
\begin{equation}
\label{eq:entropy_theta_diamond}
\begin{aligned}
\int_{Q_T} H(\theta_{\Sigma,\tau})
& = \sum_{n=0}^{N-1} \tau \sum_{\sigma\in \Sigma} \meas(\Delta_\sigma)  H(\theta_\sigma^{n+1/2})
\\
& = \sum_{n=0}^{N-1} \tau \sum_{\sigma\in \Sigma} \frac{m_\sigma d_\sigma}{d}  H(\theta_\sigma^{n+1/2})
\\
& \overset{\eqref{eq:H(c)leq}}{ \leq} \sum_{n=0}^{N-1} \tau \sum_{\sigma\in \Sigma} \frac{m_\sigma d_\sigma}{d}\left[1+\frac 12\left(H(\theta_K^{n+1/2})+H(\theta_L^{n+1/2})\right)\right]
\\
&=
\sum_{n=0}^{N-1} \tau \sum_K\sum_{\sigma\in \Sigma_K} \frac{m_\sigma d_\sigma}{2d}\left[1+H(\theta_K^{n+1/2})\right]
\\
& \overset{\eqref{eq:volumebound}}{\leq} \zeta
\sum_{n=0}^{N-1} \tau \sum_K  m_K\left[1+H(\theta_K^{n+1/2})\right]
\\
& = \zeta \left(m_\Omega T + \int_{Q_T} H(\theta_{\Tt,\tau})\right)
 \leq CT.
\end{aligned}
\end{equation}
This gives equiintegrability of $\{\theta_{\Sigma,\tau}\}_{\Tt,\tau}$ for any family of admissible meshes and time steps.
We use this to show that the fluxes $\{F_{\Sigma,\tau}\}_{\Tt,\tau}$ are also equiintegrable.
To this end, denote by $d_{\Sigma} \in L^\infty(\Omega)$ the piecewise constant function equal to $d_\sigma$ in each diamond subcell, and define
\begin{equation}\label{eq:Dpsi}
D_\psi \coloneqq \sum_{n=0}^{N-1} \tau \mc{D}_\psi(\btheta^{n+1/2},\bs{F}^{n+1/2}) = \int_{Q_T} \frac{\theta_{\Sigma,\tau}}{d_{\Sigma}^2}\psi\left( \frac{|F_{\Sigma,\tau}| d_{\Sigma}}{d\theta_{\Sigma,\tau}}\right) \,.
\end{equation}
Observe that $D_\psi$ is uniformly bounded due to Lemma \ref{lem:entropybounds}.
Let now $A\subset Q_T$ be an arbitrary measurable subset, and for any $\eps>0$ write
\[
 \|F_{\Sigma,\tau} \|_{L^1(A)} = \int_A |F_{\Sigma,\tau}| = \frac{d \eps }{2D_\psi}  \int_A  \frac{\theta_{\Sigma,\tau}}{d_{\Sigma}^2} \left(\frac{|F_{\Sigma,\tau}| d_{\Sigma}}{d\theta_{\Sigma,\tau}}\right) \left( \frac{2 d_{\Sigma} D_\psi}{\eps}\right).
\]
By $\psi,\psi^*$-Young's inequality and the expression for $D_\psi$ in \eqref{eq:Dpsi}, we obtain
\[
\begin{aligned}
\|F_{\Sigma,\tau} \|_{L^1(A)}
&\leq  \frac{d \eps }{2D_\psi}  \int_A  \frac{\theta_{\Sigma,\tau}}{d_{\Sigma}^2}\left[\psi\left( \frac{|F_{\Sigma,\tau}| d_{\Sigma}}{d\theta_{\Sigma,\tau}}\right) + \psi^*\left(\frac{2 d_{\Sigma} D_\psi}{\eps}\right) \right]
\\
& \leq  \frac{\eps}{2} + \frac{d \eps }{2D_\psi} \int_A \frac{\theta_{\Sigma,\tau}}{d_{\Sigma}^2} \psi^*\left(\frac{2 d_{\Sigma} D_\psi}{ \eps}\right)
\\
& \leq \frac{\eps}{2}  +\frac{d \eps }{2D_\psi} \max_{\sigma}\left( \frac{1}{d_\sigma^2} \psi^*\left(\frac{2 D_\psi d_\sigma}{ \eps}\right)\right) \int_A \theta_{\Sigma,\tau}\,
\end{aligned}
\]
by definition of $d_{\Sigma}$.
For all $\eps\geq 0$, there exists a constant $C_\eps$ such that $ \psi^*\left( {\xi}/{\eps}\right) = 4(\cosh(\xi/2\eps)-1)\leq C_\eps \xi^2/2$ if $\xi\leq 1$.
Since $D_\psi$ is bounded we have that eventually $\xi_\sigma=2D_\psi d_\sigma\leq 1$ in the $\max\limits_\sigma(\dots)$ term if $\size(\Tt)$ is sufficiently small (using \eqref{eq:dsigmah}), hence
\begin{equation}\label{eq:F.equiint}
\|F_{\Sigma,\tau} \|_{L^1(A)}
\leq
\frac{\eps}{2}
+ d{\eps  C_\eps } D_\psi  \int_A \theta_{\Sigma,\tau}.
\end{equation}
Since $\{\theta_{\Sigma,\tau}\}_{\Tt,\tau}$ is equiintegrable one can pick $\delta=\delta(\eps)>0$ such that $\meas(A)\leq \delta\implies\|\theta_{\Sigma,\tau}\|_{L^1(A)}\leq \frac{1}{2 d C_\eps D_\psi}$, and therefore
$$
\meas(A)\leq \delta \quad \implies\quad \|F_{\Sigma,\tau} \|_{L^1(A)}
\leq
\frac{\eps}{2}
+ d{\eps  C_\eps } D_\psi \|\theta_{\Sigma,\tau}\|_{L^1(A)}
\leq
\eps.
$$
This means precisely that $\{F_{\Sigma,\tau}\}_{\Tt,\tau}$ is equiintegrable, and as a consequence we can assume up to extraction of a further subsequence that
\begin{equation}
\label{eq:Flimit}
F_{\Sigma^k,\tau^k} \rightharpoonup F
\qquad \text{weakly in } L^1(Q_T;\mathbb{R}^d)\,
\end{equation}
for some vector field $F\in L^1(Q_T;\mathbb{R}^d)$.

The next lemma shows that the previous weak limits $\rho,\theta$ from \eqref{eq:rhothetalimit} coincide, and as of now one should keep in mind $\theta=\rho$.
Note carefully that this requires a condition $\tau=o(\dmin)$ on the mesh.

\begin{lemma}\label{lem:limitid}
Consider a sequence of solutions associated with $\left(\Tt^k,\tau^k\right)_k$ satisfying $\size(\Tt^k)\to 0$ and \eqref{eq:CFL}, i.e. $\tau^k = o(\dmink)$ as $k\to \infty$.
Then
\[
\lim_{k\rightarrow \infty} \|\theta_{\Tt^k,\tau^k} - \rho_{\Tt^k,\tau^k}\|_{L^1(Q_T)}  = 0\,.
\]
\end{lemma}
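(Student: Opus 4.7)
The plan is a short direct computation exploiting the pointwise sandwich bounds \eqref{eq:lower_upper_theta} together with the discrete continuity equation. First, the two-sided inequality
\[
\min(\rho^n_K,\rho^{n+1}_K) \leq \theta^{n+1/2}_K \leq \tfrac{1}{2}(\rho^n_K+\rho^{n+1}_K)
\]
from Proposition~\ref{prop:existence} implies in both cases $\rho^n_K \leq \rho^{n+1}_K$ and $\rho^n_K \geq \rho^{n+1}_K$ the elementary pointwise estimate
\[
|\theta^{n+1/2}_K - \rho^{n+1}_K|
\;\leq\; \tfrac{1}{2}|\rho^{n+1}_K - \rho^n_K|.
\]
Integrating over $Q_T$ via the reconstruction \eqref{eq:densreco} reduces the claim to showing that $\sum_n \tau \sum_K m_K |\rho^{n+1}_K - \rho^n_K|\to 0$.

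Second, from the discrete continuity equation \eqref{eq:discretePDE} and the triangle inequality,
\[
m_K |\rho^{n+1}_K - \rho^n_K|
\;\leq\; \tau \sum_{\sigma \in \Sigma_K} m_\sigma F^{n+1/2}_\sigma.
\]
Summing over $K$ and noting that every interior facet $\sigma = K|L$ is counted exactly twice while boundary facets carry no flux, we obtain
\[
\sum_{n=0}^{N-1} \tau \sum_K m_K |\rho^{n+1}_K - \rho^n_K|
\;\leq\; 2\tau \sum_{n=0}^{N-1} \tau \sum_{\sigma \in \Sigma} m_\sigma F^{n+1/2}_\sigma
\;\leq\; \frac{2\tau}{\dmin}\sum_{n=0}^{N-1} \tau \sum_{\sigma \in \Sigma} m_\sigma d_\sigma F^{n+1/2}_\sigma,
\]
where in the last step we used $m_\sigma \leq m_\sigma d_\sigma / \dmin$. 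By the definition \eqref{eq:Freco} of the flux reconstruction, the double sum on the right equals $\|F_{\Sigma,\tau}\|_{L^1(Q_T)}$ up to a factor depending only on $d$.

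Finally, $\|F_{\Sigma^k,\tau^k}\|_{L^1(Q_T)}$ is uniformly bounded along the sequence — this follows either directly from the weak $L^1$ convergence~\eqref{eq:Flimit} of $F_{\Sigma^k,\tau^k}$, or more explicitly from applying the equiintegrability estimate~\eqref{eq:F.equiint} with $A = Q_T$ together with the uniform bounds on $D_\psi$ and $\|\theta_{\Sigma,\tau}\|_{L^1(Q_T)}$ from Lemma~\ref{lem:entropybounds} and \eqref{eq:entropy_theta_diamond}. Combining all of the above,
\[
\|\theta_{\Tt^k,\tau^k} - \rho_{\Tt^k,\tau^k}\|_{L^1(Q_T)} \;\lesssim\; \frac{\tau^k}{\dmink}\, \|F_{\Sigma^k,\tau^k}\|_{L^1(Q_T)} \;\longrightarrow\; 0,
\]
by the CFL-type hypothesis~\eqref{eq:CFL}. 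No real obstacle arises: the only mildly subtle point is that the natural $L^1$ norm of the fluxes comes weighted by $d_\sigma$ through the diamond measure $\meas(\Delta_\sigma) = m_\sigma d_\sigma/d$, which is exactly what forces the bound $\tau = o(\dmin)$ rather than merely $\tau = o(1)$.
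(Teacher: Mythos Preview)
Your argument is correct and follows essentially the same route as the paper: bound $|\theta^{n+1/2}_K-\rho^{n+1}_K|$ by $|\rho^{n+1}_K-\rho^n_K|$ via \eqref{eq:lower_upper_theta}, then control $\sum_{n,K}\tau m_K|\rho^{n+1}_K-\rho^n_K|$ through the discrete continuity equation and the CFL factor $\tau/\dmin$. One small slip: the constant $\tfrac12$ in your pointwise bound is not valid in the case $\rho^n_K\leq\rho^{n+1}_K$, since then the lower bound $\theta^{n+1/2}_K\geq\min(\rho^n_K,\rho^{n+1}_K)=\rho^n_K$ only gives $|\theta^{n+1/2}_K-\rho^{n+1}_K|\leq|\rho^{n+1}_K-\rho^n_K|$; the factor $\tfrac12$ would require the \emph{upper} bound there. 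This is harmless for the argument.

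The only genuine difference from the paper is cosmetic: where the paper re-applies a $\psi,\psi^*$ Young inequality to bound $\sum_\sigma m_\sigma F^{n+1/2}_\sigma$ directly in terms of $D_\psi$ and $\|\theta_{\Sigma,\tau}\|_{L^1}$, you instead insert $d_\sigma/\dmin$, recognize the resulting sum as $\|F_{\Sigma,\tau}\|_{L^1(Q_T)}$, and invoke the uniform $L^1$ bound already established in \eqref{eq:F.equiint}. Your version is marginally cleaner since it avoids repeating the Young-inequality step.
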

\noindent
\begin{proof}
Let us consider the solution $(\brho^n)_{n=0}^N,(\btheta^{n+1/2})_{n=0}^{N-1}$ obtained for fixed $\Tt,\tau$.
By Proposition \ref{prop:existence} and \eqref{eq:discretePDE}--\eqref{eq:fluxes.linear} we control first by $\psi,\psi^*$-Young inequality
\[
\begin{aligned}
\sum_{K} m_K |\theta^{n+1/2}_K - \rho_K^{n+1}| &
\overset{\eqref{eq:lower_upper_theta}}{\leq} \sum_{K\in\Tt} m_K |\rho^{n+1}_K - \rho_K^n|
\\
 & \leq \, 2 \tau \sum_{\sigma \in \Sigma_\text{int}} \frac{m_\sigma}{d_\sigma} \pi_\sigma\left|\frac{\theta^{n+1/2}_K}{\pi_K} - \frac{\theta^{n+1/2}_L}{\pi_L}\right|
\\
& \leq
2 \tau  \sum_{\sigma \in \Sigma_\text{int}} \frac{m_\sigma}{d_\sigma^2} \theta^{n+1/2}_\sigma
\left[\psi\left( \frac{\pi_\sigma}{\theta^{n+1/2}_\sigma} \left|\frac{\theta^{n+1/2}_K}{\pi_K} - \frac{\theta^{n+1/2}_L}{\pi_L}\right| \right)
+\psi^*(d_\sigma ) \right]
\\
& \leq
 \frac{2\tau}{\dmin}  \sum_{\sigma \in \Sigma} \frac{m_\sigma}{d_\sigma} \theta^{n+1/2}_\sigma
\left[\psi\left( \frac{\pi_\sigma}{\theta^{n+1/2}_\sigma} \left|\frac{\theta^{n+1/2}_K}{\pi_K} - \frac{\theta^{n+1/2}_L}{\pi_L}\right| \right)
+\psi^*(d_\sigma ) \right].
\end{aligned}
\]
Pick now $C>0$ such that $\psi^*(\xi)=4(\cosh(\xi/2)-1)\leq C\xi^2$ for $\xi\leq 1$.
Summing the above estimate over $n$, and recalling the definition \eqref{eq:Dpsi} of $D_\psi$, we obtain
\[
\begin{aligned}
\|\theta_{\Tt,\tau} - \rho_{\Tt,\tau}\|_{L^1(Q_T)} &
= \sum_{n=0}^{N-1} \tau  \sum_{K\in\Tt}  m_K |\theta^{n+1/2}_K - \rho_K^{n+1}|
\\
 & \leq
 \frac{2\tau}{\dmin} \left( D_\psi + C \sum_n \tau \sum_{\sigma\in \Sigma} m_\sigma d_\sigma \theta_\sigma^{n+1/2} \right)
 =
\frac{2\tau}{\dmin} \left( D_\psi + C d\|\theta_{\Sigma,\tau}\|_{L^1(Q_T)} \right).
\end{aligned} 
\]
Recall now that $D_\psi$ is bounded (Lemma~\ref{lem:entropybounds}), and observe that since $\{\theta_{\Sigma,\tau}\}$ is equiintegrable (owing to the entropy bound \eqref{eq:entropy_theta_diamond}) it has bounded $L^1(Q_T)$ norm.
Due to our standing assumption \eqref{eq:CFL} the last term is $o(1)$ as $k\to\infty$ and the proof is complete.
\end{proof}
%
% % % % % % % % % % % % % % % % % % % % % % % % % % % % % % % % % % % % % % % % % % % % % % % % % % % % % % % % % %
\subsection{Strong convergence of the approximate densities}\label{ssec:compact.strong}

The main goal of this section is to establish improved compactness and therefore strong convergence of the reconstructructions
$\theta_{\Tt,\tau}$ and $\rho_{\Tt,\tau}$.
Since the Fokker-Planck equation is linear this is not strictly required in order to prove the convergence of the scheme in Section \ref{ssec:conv.final} (see also Remark \ref{rem:weakenough}), and this should rather be read as a separate result of independent interest.

As a preliminary, we derive a uniform estimate on the discrete $L^2_t\dot H^1_x$ semi-norm of $\sqrt{\theta_{\Tt,\tau}}$, where the discrete TPFA $\dot H^1$ semi-norm is classically defined as
\[
{|\bs u|}_{1,\Tt}^2 = \sum_{\sigma \in \Sigma} \frac{m_\sigma}{d_\sigma} (u_K - u_L)^2, \qquad \bs u \in \R^\Tt. 
\]
Starting from~\eqref{eq:Dpsistar}, we first rearrange the Fisher information as the sum of a linear part plus the $\dot H^1$ semi-norm
\begin{equation}\label{eq:Fisher.1}
\frac{1}{2} \mc{R}_{\psi}(\brho) = \sum_{\sigma \in \Sigma} \frac{m_\sigma\pi_\sigma}{d_\sigma} \left|\sqrt{\frac{\rho_K}{\pi_K}} - \sqrt{\frac{\rho_L}{\pi_L}}\right|^2 =  \mc{I}(\brho) + |\sqrt{\brho}|_{1,\Tt}^2,
\end{equation}
where
\begin{equation}\label{eq:Fisher.I}
\mc{I} (\brho) \coloneqq \sum_{K \in \Tt} \sum_{\sigma \in \Sigma_K}  \frac{m_\sigma}{d_\sigma} \rho_K \left( \sqrt{\frac{\pi_L}{\pi_K}} -1\right).
\end{equation}

\begin{lemma}\label{lem:dissip2H1}
There exists $C$ depending on $V$ and $\zeta$ (but neither on $\tau$ nor on $\text{size}(\Tt)$) such that
\begin{equation}\label{eq:dissip2H1}
\left| \mc{I} (\brho)\right|
\leq
C \brho[\Omega]
 + \frac12  \left|\sqrt{\brho}\right|_{1,\Tt}^2, \qquad \forall\,\brho\in \R_+^\Tt.
\end{equation}
As a consequence, there exists $C_T$ uniform with respect to $\Tt,\tau$ such that the solution of our scheme satisfies
\begin{equation}\label{eq:L2H1_sqrt_theta}
\sum_{n=0}^{N-1} \tau \left|\sqrt{\btheta^{n+1/2}}\right|_{1,\Tt}^2 \leq C_T.
\end{equation}
\end{lemma}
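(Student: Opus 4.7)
The plan is to prove the algebraic inequality \eqref{eq:dissip2H1} by a symmetric discrete product-rule decomposition of $\mc I(\brho)$, and then to deduce \eqref{eq:L2H1_sqrt_theta} from \eqref{eq:dissip2H1} combined with the discrete EDI of Proposition~\ref{prop:ede}, Lemma~\ref{lem:entropybounds}, and the mass conservation plus upper bound on $\btheta^{n+1/2}$ from Proposition~\ref{prop:existence}. The main subtlety lies in part one: a naive Taylor expansion $|\sqrt{\pi_L/\pi_K}-1|\leq Cd_\sigma$ plugged directly in \eqref{eq:Fisher.I} produces the perimeter-type sum $\sum_K\rho_K\sum_{\sigma\in\Sigma_K} m_\sigma$, which is not controlled by $\sum_K m_K\rho_K$ under the mesh regularity \eqref{eq:volumebound}; the decomposition below is tailored to separate a manifestly non-negative $O(d_\sigma^2)$ contribution from a piece factorizing the increments $\sqrt{\rho_K}-\sqrt{\rho_L}$ that enter $|\sqrt\brho|_{1,\Tt}^2$.

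For \eqref{eq:dissip2H1}, I would first rewrite $\mc I(\brho)$ as an edge sum by pairing the two contributions of every inner $\sigma = K|L$ in \eqref{eq:Fisher.I}, which gives
\[
\mc I(\brho) = \sum_{\sigma\in\Sigma}\frac{m_\sigma}{d_\sigma}(\sqrt{\pi_L}-\sqrt{\pi_K})\Bigl(\frac{\rho_K}{\sqrt{\pi_K}}-\frac{\rho_L}{\sqrt{\pi_L}}\Bigr).
\]
Setting $u_K=\sqrt{\rho_K}$, $v_K=1/\sqrt{\pi_K}$, I then use the symmetric identity
\[
u_K^2v_K-u_L^2v_L \;=\; \frac{u_K^2+u_L^2}{2}(v_K-v_L) + \frac{v_K+v_L}{2}(u_K^2-u_L^2)
\]
together with $v_K-v_L=(\sqrt{\pi_L}-\sqrt{\pi_K})/\pi_\sigma$ to split $\mc I(\brho) = B + A$, where
\[
B = \sum_{\sigma\in\Sigma}\frac{m_\sigma}{d_\sigma}\frac{\rho_K+\rho_L}{2}\frac{(\sqrt{\pi_L}-\sqrt{\pi_K})^2}{\pi_\sigma}\;\geq\; 0
\]
and $A$ collects the cross terms proportional to $(\sqrt{\pi_L}-\sqrt{\pi_K})(\sqrt{\rho_K}-\sqrt{\rho_L})(\sqrt{\rho_K}+\sqrt{\rho_L})$. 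Since $V\in C^2(\overline\Omega)$ on the compact $\overline\Omega$, $\sqrt\pi=e^{-V/2}$ is Lipschitz and bounded above and below by positive constants, so $|\sqrt{\pi_L}-\sqrt{\pi_K}|\leq Cd_\sigma$ and $\pi_\sigma,v_K$ are uniformly bounded. Combined with $\sum_{\sigma\in\Sigma_K}m_\sigma d_\sigma\leq 2d\zeta m_K$ from \eqref{eq:volumebound}, this immediately yields $B\leq C\brho[\Omega]$. For $A$, isolating the factor $\sqrt{m_\sigma/d_\sigma}(\sqrt{\rho_K}-\sqrt{\rho_L})$ and applying Cauchy--Schwarz gives
\[
|A|\leq |\sqrt\brho|_{1,\Tt}\,\sqrt{C\sum_{\sigma\in\Sigma}m_\sigma d_\sigma(\sqrt{\rho_K}+\sqrt{\rho_L})^2}\;\leq\; C'\sqrt{\brho[\Omega]}\,|\sqrt\brho|_{1,\Tt},
\]
using $(\sqrt{\rho_K}+\sqrt{\rho_L})^2\leq 2(\rho_K+\rho_L)$ and \eqref{eq:volumebound} once more. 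Young's inequality then yields $|A|\leq\tfrac12|\sqrt\brho|_{1,\Tt}^2+C''\brho[\Omega]$, and combining with the bound on $B$ produces \eqref{eq:dissip2H1}.

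For \eqref{eq:L2H1_sqrt_theta}, I would reorganize \eqref{eq:Fisher.1} as $|\sqrt\brho|_{1,\Tt}^2 = \tfrac12\mc R_\psi(\brho)-\mc I(\brho)$ and substitute \eqref{eq:dissip2H1}; absorbing $\tfrac12|\sqrt\brho|_{1,\Tt}^2$ on the left yields
\[
|\sqrt\brho|_{1,\Tt}^2 \;\leq\; \mc R_\psi(\brho) + 2C\brho[\Omega].
\]
Specializing to $\brho = \btheta^{n+1/2}$, multiplying by $\tau$ and summing in $n$, the sum of the first terms is bounded by Lemma~\ref{lem:entropybounds}. For the second, the upper bound $\btheta^{n+1/2}\leq\tfrac12(\brho^n+\brho^{n+1})$ of Proposition~\ref{prop:existence} together with the mass conservation $\brho^n[\Omega]=\brho^0[\Omega]$ gives $\btheta^{n+1/2}[\Omega]\leq\brho^0[\Omega]$, so $\sum_n\tau\,\btheta^{n+1/2}[\Omega]\leq T\brho^0[\Omega]$. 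This yields \eqref{eq:L2H1_sqrt_theta} with a constant $C_T$ depending only on $T$, $V$, $\zeta$, $\mc H(\rho^0)$ and $\rho^0[\Omega]$.
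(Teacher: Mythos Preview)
Your proof is correct. Both parts follow the paper's overall strategy, and Part~2 is essentially identical.

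For Part~1 your decomposition differs from the paper's. The paper keeps the cell-based form and Taylor-expands $\sqrt{\pi_L/\pi_K}-1=e^{(V_K-V_L)/2}-1=\frac{V_K-V_L}{2}+\frac{(V_K-V_L)^2}{8}e^{y_{K,\sigma}}$, which yields the split $\mc I=\mc I_1+\mc I_2$ with $\mc I_1=\tfrac12\sum_\sigma\frac{m_\sigma}{d_\sigma}(\rho_K-\rho_L)(V_K-V_L)$ and a manifestly nonnegative $\mc I_2$; the two terms are then bounded exactly as you bound $A$ and $B$. You instead pass to an edge sum and apply the discrete product rule $u_K^2v_K-u_L^2v_L=\tfrac{u_K^2+u_L^2}{2}(v_K-v_L)+\tfrac{v_K+v_L}{2}(u_K^2-u_L^2)$ with $u=\sqrt\rho$, $v=1/\sqrt\pi$. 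The resulting pieces $B$ and $A$ are the direct analogues of $\mc I_2$ and $\mc I_1$ (to leading order $(\sqrt{\pi_L}-\sqrt{\pi_K})\tfrac{v_K+v_L}{2}\approx\tfrac12(V_K-V_L)$), so the two routes are morally equivalent. Your version has the minor advantage of working directly with $\sqrt\pi$ and avoiding the exponential remainder bookkeeping; the paper's Taylor approach makes the connection to $\nabla V$ and $\Delta V$ (needed later in Proposition~\ref{prop:fisher}) more transparent.
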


\begin{proof}
Let us first focus on~\eqref{eq:dissip2H1}. 
Bearing in mind that $\pi_K=e^{-V_K}$, we write first $\sqrt{\frac{\pi_L}{\pi_K}} -1=e^{\frac{V_K-V_L}{2}}-1$ in \eqref{eq:Fisher.I}.
Applying the mean value theorem $e^x - 1 = x + \frac{x^2}2 e^y$ for some $y$ between $0$ and $x$, we further split, for any $\brho\in \R^\Tt_+$
\[
\mc{I} (\brho) = \mc{I}_1 (\brho) + \mc{I}_2 (\brho)
\]
with
\[
\begin{aligned}
 \mc{I}_1 (\brho)
 &
 \coloneqq \frac12 \sum_{\sigma \in \Sigma} \frac{m_\sigma}{d_\sigma}
(\rho_K - \rho_L) (V_K - V_L)
\\
 \mc{I}_2 (\brho)
 &\coloneqq \frac 18\sum_{K \in \Tt} \sum_{\sigma \in \Sigma_K}  \frac{m_\sigma}{d_\sigma}
 \rho_K (V_K - V_L)^2 e^{y_{K,\sigma}^{n+1/2}}
 \end{aligned}
\]
for some $y_{K,\sigma}^{n+1/2}$ between $0$ and $(V_K - V_L)/2$.
As $\rho_K \geq 0$, and using the regularity of $V$, we get that
\[
\mc{I}_2 (\brho)
\leq
e^{\frac{\max V - \min V}2}\|\nabla V\|_\infty^2\frac 18   \sum_{K \in \Tt}
 \rho_K \sum_{\sigma \in \Sigma_K} m_\sigma d_\sigma
 \overset{\eqref{eq:volumebound}}{\leq}
 C  \sum_{K \in \Tt}  \rho_K m_K
 =C\brho[\Omega]
\]
with $C$ depending only on $V$ and $\zeta$.
For the $\mc{I}_1$ term we use next the
identities $a-b = (\sqrt a - \sqrt b) (\sqrt a + \sqrt b)$ and $ab \leq (a^2 + b^2 )/ 4$ to get
\[
\mc{I}_1(\brho) \leq \frac12  \left|\sqrt{\brho}\right|_{1,\Tt}^2 +
\frac18 \sum_{\sigma \in \Sigma} \frac{m_\sigma}{d_\sigma} \left(\sqrt{\rho_K} + \sqrt{\rho_L} \right)^2
\left(V_K - V_L \right)^2.
\]
The Lipschitz continuity of $V$ and the elementary inequality $(\sqrt a + \sqrt b)^2 \leq 2(a + b)$ then yield
\[
\begin{aligned}
\mc{I}_1(\brho)
& \leq
\frac12  \left|\sqrt{\brho}\right|_{1,\Tt}^2 + \frac{\| \nabla V \|^2_\infty}4
\sum_{K \in \Tt}  \rho_K \sum_{\sigma \in \Sigma_K} m_\sigma d_\sigma
\\
&
\overset{\eqref{eq:volumebound}}{\leq}
\frac12  \left|\sqrt{\brho}\right|_{1,\Tt}^2 + \frac{\| \nabla V \|^2_\infty}4
\sum_{K \in \Tt}  \rho_K  2d\zeta m_K
\\
&
=
\frac12  \left|\sqrt{\brho}\right|_{1,\Tt}^2 + C\brho[\Omega],
\end{aligned}
\]
for $C$ depending again only on $V,\zeta$.
Combining the above elements provides~\eqref{eq:dissip2H1}.

Turning now to \eqref{eq:L2H1_sqrt_theta}, observe from \eqref{eq:Fisher.1}--\eqref{eq:dissip2H1} that
$$
\frac12  \left|\sqrt{\brho}\right|_{1,\Tt}^2
\leq
\frac{1}{2} \mc{R}_{\psi}(\brho)
+
C \brho[\Omega],\qquad
\forall\,\brho\in \R_+^\Tt.
$$
Summing over $n$ gives, for any discrete curve $(\btheta^{n+1/2})_{n=0}^{N-1}$,
\begin{equation}
\label{eq:control_sum_H1}
\sum_{n=0}^{N-1} \tau \left|\sqrt{\btheta^{n+1/2}}\right|_{1,\Tt}^2
\leq
\sum_{n=0}^{N-1} \tau \mc{R}_{\psi}(\btheta^{n+1/2})
+
2C\|\theta_{\Tt,\tau}\|_{L^1(Q_T)}.
\end{equation}
When evaluated for our particular solution of the discrete scheme, the first term in the right-hand side is bounded by Lemma~\ref{lem:entropybounds}.
Recalling $\theta^{n+1/2}_K\leq \frac{\rho^{n+1}_K +\rho^n_K}{2}$ from \eqref{eq:lower_upper_theta} and the mass conservation $\brho^{n+1}[\Omega]=\brho^n[\Omega]=\rho^0[\Omega]$ from Proposition~\ref{prop:existence}, we see that $\|\theta_{\Tt,\tau}\|_{L^1(Q_T)}\leq T\rho^0[\Omega]$ and the proof is complete.
\end{proof}

We can now upgrade the previous weak $L^1(Q_T)$ convergence of the approximate densities $\rho_{\Tt,\tau},\theta_{\Tt,\tau}$ into strong convergence to a common limit.
\begin{proposition}\label{prop:convL1}
Let $\rho,\theta$ be as in~\eqref{eq:rhothetalimit} and assume~\eqref{eq:CFL} as in Lemma~\ref{lem:limitid}.
Then
$
\rho=\theta
$
and, up to extraction of a subsequence,
\begin{align}\label{eq:convL1.theta}
&\theta_{\Tt^k, \tau^k} \xrightarrow[k\to+\infty]{}\rho \quad \text{ strongly in $L^1(Q_T)$}, \\
\label{eq:convL1.rho}
&\rho_{\Tt^k, \tau^k} \xrightarrow[k\to+\infty]{} \rho \quad \text{ strongly in $L^1(Q_T)$}.
\end{align}
\end{proposition}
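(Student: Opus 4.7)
The plan is to combine Lemma~\ref{lem:limitid} with a discrete Aubin--Lions--Simon argument tailored to the low regularity at hand. First, the $L^1$-closeness established in Lemma~\ref{lem:limitid} immediately identifies the weak limits appearing in~\eqref{eq:rhothetalimit}, so that $\rho=\theta$ a.e. As a consequence it suffices to establish~\eqref{eq:convL1.theta}; the convergence~\eqref{eq:convL1.rho} will then follow by the triangle inequality together with Lemma~\ref{lem:limitid}.

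For the strong $L^1$ compactness I would work at the level of $\sqrt{\theta_{\mc T,\tau}}$ in $L^2(Q_T)$, since this is the quantity controlled by the Fisher information via Lemma~\ref{lem:dissip2H1}. Combined with the mesh regularity~\eqref{eq:volumebound}, the $L^2_t$ bound on $|\sqrt{\btheta^{n+1/2}}|_{1,\mc T}$ yields a standard finite-volume space translation estimate of the form
\[
\|\sqrt{\theta_{\mc T,\tau}}(\cdot,\cdot+y)-\sqrt{\theta_{\mc T,\tau}}\|_{L^2(Q_T)}^2 \leq C|y|\bigl(|y|+\size(\mc T)\bigr),
\]
hence precompactness in space for $\sqrt{\theta_{\mc T,\tau}}$. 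For the time translations, I would test the discrete continuity equation~\eqref{eq:discretePDE} against a smooth function and use a summation by parts to convert the discrete divergence into a pairing against the reconstructed flux $F_{\Sigma,\tau}$. Since $\{F_{\Sigma,\tau}\}$ is equiintegrable in $L^1(Q_T;\mathbb R^d)$ (established just before~\eqref{eq:F.equiint} via $\psi,\psi^*$-Young and the uniform bound on $D_\psi$), this produces a uniform modulus of continuity in time for $\rho_{\mc T,\tau}$ in a suitable dual space of smooth functions.

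I would then invoke the Aubin--Lions--Simon variant of Appendix~\ref{sec:appendix_Moussa}: the spatial compactness of $\sqrt{\theta_{\mc T,\tau}}$, combined with the time compactness of $\rho_{\mc T,\tau}$ and the asymptotic identification $\theta_{\mc T,\tau}-\rho_{\mc T,\tau}\to 0$ in $L^1(Q_T)$ from Lemma~\ref{lem:limitid}, produces a.e.\ convergence (up to a further subsequence) of $\sqrt{\theta_{\mc T,\tau}}$, and hence of $\theta_{\mc T,\tau}$. The equiintegrability of $\{\theta_{\mc T,\tau}\}$, which follows from the uniform entropy bound~\eqref{eq:bound_entropy} and the de la Vallée-Poussin criterion, then upgrades the weak $L^1$ convergence in~\eqref{eq:rhothetalimit} to the strong $L^1$ convergence claimed in~\eqref{eq:convL1.theta} via Vitali's theorem.

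The main obstacle lies in the mismatch between the spatial control (only on $\sqrt{\theta_{\mc T,\tau}}$, not on $\theta_{\mc T,\tau}$ or $\rho_{\mc T,\tau}$) and the time control (on $\rho_{\mc T,\tau}$, and only against smooth test functions since the fluxes are controlled in $L^1$ rather than $L^2$). Bridging these two objects is precisely what necessitates the Moussa-type extension of Aubin--Lions--Simon, and the identification $\theta=\rho$ in the limit provided by Lemma~\ref{lem:limitid} is the structural ingredient that makes the two families interchangeable for the purpose of extracting a.e.\ convergence.
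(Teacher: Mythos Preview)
Your architecture is right—space compactness from the Fisher bound, time compactness from the $L^1$ flux control on $\rho_{\mc T,\tau}$, a Moussa-type Aubin--Lions to handle the mismatch, Vitali to finish—but two genuine gaps remain.

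First, Proposition~\ref{prop:aubin_lions} requires the space-compact sequence $f_k$ to lie in $L^\infty(Q_T)$, and $\sqrt{\theta_{\mc T,\tau}}$ is only bounded in $L^2$; you cannot feed it in directly. The paper handles this by composing with a bounded increasing function $f:\R_+\to\R$ for which $z\mapsto f(z^2)$ is $1$-Lipschitz (e.g.\ $f(z)=\tanh\sqrt z$), so that $f_{\mc T,\tau}\coloneqq f(\theta_{\mc T,\tau})$ is in $L^\infty$ and inherits from $\sqrt{\theta_{\mc T,\tau}}$ the same discrete $L^2_t\dot H^1_x$ translation estimate.

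Second, and more seriously, the \emph{conclusion} of Proposition~\ref{prop:aubin_lions} is only the weak-measure convergence of the product $\rho_kf_k\rightharpoonup \rho\,\mathfrak f$; it does not by itself yield a.e.\ convergence of either factor. The missing step is a Minty-type monotonicity argument: for every $z\geq 0$ the inequality $(\theta_{\mc T,\tau}-z)(f(\theta_{\mc T,\tau})-f(z))\geq 0$ passes to the limit thanks to the product convergence (together with $\theta_{\mc T,\tau}-\rho_{\mc T,\tau}\to 0$), which forces $\mathfrak f=f(\rho)$ a.e. One then checks that $R_k\coloneqq(\theta_{\mc T,\tau}-\rho)(f(\theta_{\mc T,\tau})-f(\rho))\geq 0$ has $\|R_k\|_{L^1}\to 0$, and strict monotonicity of $f$ converts this into a.e.\ convergence $\theta_{\mc T,\tau}\to\rho$, after which Vitali applies as you say. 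Without this Minty step the Moussa compensation only delivers a statement about the product, not pointwise convergence.
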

\begin{proof}
Observe first from Lemma~\ref{lem:limitid} that the weak limits must coincide $\rho=\theta$, so it suffices to prove that $\theta_{\Tt^k, \tau^k}\to \rho$ strongly in $L^1(Q_T)$.
Our proof relies on a combination of an Aubin-Lions-Simon concentration-compactness argument and a monotone Minty's trick, already proposed in~\cite{ACM17}.
This will however need some adaptation of results from \cite{moussa2016some} to our specific setup, which we defer to Proposition~\ref{prop:aubin_lions} in the appendix.

Let $f$ be an increasing and bounded function from $\R_+$ to $\R$ such that $f(0) = 0$ and such that $z\mapsto f(z^2)$ is $1$-Lipschitz continuous (typically $f(z) = \tanh\sqrt z$).
Define next the piecewise constant and discrete functions
$$f_{\Tt,\tau} \coloneqq  f(\theta_{\Tt,\tau})
\qquad\text{and}\qquad
\bs{f}^{n+1/2} \coloneqq f(\btheta^{n+1/2}).
$$
Recall from \eqref{eq:rhothetalimit} that
$$
\rho_{\Tt^k,\tau^k}\xrightharpoonup[k\to +\infty]{} \rho
\quad\text{weakly in }L^1(Q_T),
$$
and observe that, since $f$ is bounded, $\{f_{\Tt,\tau}\}$ is bounded in $L^\infty(Q_T)$ and therefore
\begin{equation*}\label{eq:f.Linfwstar}
f_{\Tt^k,\tau^k} \xrightarrow[k\to +\infty]{}\mathfrak f \quad \text{weakly-$\ast$ in $L^\infty(Q_T)$}
\end{equation*}
for some $\mathfrak f \in L^\infty(Q_T)$ and possibly up to extraction of a subsequence.
We aim to use Proposition~\ref{prop:aubin_lions} from the Appendix to guarantee that, with suitable time-compactness on $\{\rho_{\Tt,\tau}\}$ and space compactness on $\{f_{\Tt,\tau}\}$, we can pass to the limit in the product $\rho_{\Tt,\tau}f_{\Tt,\tau}\rightharpoonup\rho \mathfrak f$ in the sense of measures.

We first focus on the space compactness.
Since $z\mapsto f(z^2)$ is $1$-Lipschitz, we have that
\[
\left|\bs{f}^{n+1/2}\right|_{1,\Tt^k} \leq  \left|\sqrt{\btheta^{n+1/2}}\right|_{1,\Tt^k}, \qquad n \geq 0,
\]
hence we deduce from Lemma~\ref{lem:dissip2H1} that
\begin{equation}\label{eq:f.L2H1}
\sum_{n=0}^{N-1} \tau {|\bs{f}^{n+1/2}|}_{1,\Tt^k}^2 \leq C_T.
\end{equation}
A slight adaptation of \cite[Lemma 9.3]{eymard2000finite} shows first of all that the limit $\mathfrak f\in L^2(0,T;H^1(\Omega))$, and moreover controls the $L^2$ space difference quotients by the discrete $H^1$ norm in the following quantitative sense:
there exists a constant $C>0$ only depending on $\Omega$ such that, for any compact subset $\omega \subset\subset \Omega$ and $h \in \mathbb{R}^d$ such that $|h| < \mathrm{dist}(\omega, \partial \Omega) $,

\begin{equation*}
\int_0^T \int_{\omega}
\left|f_{\Tt,\tau} (t,x +h )-f_{\Tt,\tau} (t,x )\right|^2
\leq
|h |\big[|h |+C\,\text{size}(\Tt)\big] \sum_n \tau | \sqrt{\bs{f}^{n+1/2}}|^2_{1,\Tt}
\leq
C_T |h|.
\end{equation*}
This gives in turn the (suboptimal) $L^1$ difference quotient estimate
\begin{equation}
\label{eq:compact.nabla_f}
\int_0^T \int_{\omega}
\left|f_{\Tt^k,\tau^k} (t,x +h )-f_{\Tt^k,\tau^k} (t,x )\right|
\leq C\sqrt{|h|},
\qquad\text{uniformly in }k.
\end{equation}

Turning now to the compactness in time for $\rho$, take any arbitrary test function $\varphi \in C^\infty_c(Q_T)$, define $\left(\bs{\varphi}^{n}\right)_n\in \R^{\Tt}$ by
\[
\varphi_K^{n}
\coloneqq
\frac1{m_K} \int_K \varphi(t^n, x) \ed x, \qquad K \in \Tt, \; n =0,\dots, N-1,
\]
and compute
\begin{align*}
\left|\sum_{n=0}^{N-1} \sum_{K\in\Tt} m_K (\rho_K^{n+1} - \rho_K^n) \varphi_K^{n}\right|
& \overset{\eqref{eq:discretePDE}}{=}
\left|\sum_{n=0}^{N-1} \tau \sum_{\sigma \in \Sig} m_\sigma F_{K\sigma}^{n+1/2}  \left(\varphi_L^{n} -  \varphi_K^{n}\right)\right|
\\
& \leq  \max_{n,\sigma} \left| \frac{\varphi_L^{n} -  \varphi_K^{n}}{d_\sigma} \right|
\sum_{n=0}^{N-1} \tau \sum_{\sigma \in \Sig} m_\sigma d_\sigma \left| F_{K\sigma}^{n+1/2} \right|
\\
& =  \max_{n,\sigma}\left| \frac{\varphi_L^{n} -  \varphi_K^{n}}{d_\sigma} \right|\,\|F_{\Sigma,\tau} \|_{L^1(Q_T)}.
\end{align*}
Taking $\eps=1$ and $A=Q_T$ in \eqref{eq:F.equiint} gives that $\|F_{\Sigma,\tau} \|_{L^1(Q_T)}\leq C$ is bounded uniformly in $\Tt,\tau$.
Moreover, it is shown in~\cite[\S4.4]{ACM17} that
\[
\left| \frac{\varphi_L^{n} -  \varphi_K^{n}}{d_\sigma} \right| \leq (1+2\zeta) \|\nabla \varphi \|_\infty,
\]
so that altogether we get the $\mc M(0,T;(W^{1,\infty}(\Omega)'))$ estimate
\begin{equation}\label{eq:compact.dtrho}
\left|\langle \partial_t\rho_{\Tt,\tau},\varphi\rangle\right|
=
\left|\sum_{n=0}^{N-1} \tau\sum_{K\in\Tt} m_K \frac{\rho_K^{n+1} - \rho_K^n}\tau \varphi_K^{n}\right|
\leq
C  \|\nabla \varphi \|_\infty.
\end{equation}
We are now in position of rigorously applying our Aubin-Lions compactness from Proposition~\ref{prop:aubin_lions}: \eqref{item:rho_f_L1_Linfty} $\rho_{\Tt^k,\tau^k}$ is bounded in $L^\infty(0,T;L^1(\Omega))$ (conservation of mass), hence in $L^1(Q_T)$, and $f_{\Tt^k,\tau^k}$ is bounded in $L^\infty(Q_T)$, the equiintegrability \eqref{item:rho_equi} follows from the entropy bound \eqref{eq:bound_entropy}, the space compactness \eqref{item:sp_compact} is exactly given by \eqref{eq:compact.nabla_f}, and the time compactness \eqref{item:time_compact} is just \eqref{eq:compact.dtrho}.
We conclude that, up to extraction of a subsequence if need be,
\begin{equation}\label{eq:rho*f}
\rho_{\Tt^k,\tau^k} f_{\Tt^k,\tau^k} \xrightharpoonup[k\to+\infty]{} \rho\, \mathfrak f \quad \text{in $\mc M(Q_T)$},
\end{equation}
and from Lemma~\ref{lem:limitid} also
\begin{equation}\label{eq:theta*f}
\theta_{\Tt^k,\tau^k} f_{\Tt^k,\tau^k} \xrightharpoonup[k\to+\infty]{} \rho\, \mathfrak f \quad \text{in $\mc M(Q_T)$}.
\end{equation}
Fix now any $z \in \R_+$ and $\varphi \in C(\bar Q_T)$ with $\varphi\geq 0$.
Recalling that $f_{\Tt,\tau}=f(\theta_{\Tt,\tau})$, the monotonicity of $f$ gives
\[
\int_{Q_T} (\theta_{\Tt^k, \tau^k} - z) (f_{\Tt^k, \tau^k} - f(z)) \, \varphi \geq 0
\]
and therefore owing to \eqref{eq:theta*f}
\[
\int_{Q_T} (\rho - z) (\mathfrak{f} - f(z)) \, \varphi
=
\lim_{k\to +\infty} \int_{Q_T} (\theta_{\Tt^k, \tau^k} - z) (f_{\Tt^k, \tau^k} - f(z)) \, \varphi
\geq 0.
\]
Since $\varphi\geq 0$ is arbitrary we see that
\[
 \forall \, z \geq 0\,,\hspace{1cm}
 (\rho - z) (\mathfrak{f} - f(z)) \geq 0 \quad \text{a.e. in } Q_T,
\]
which implies
\begin{equation}\label{eq:Minty}
\mathfrak{f} = f(\rho) \quad \text{a.e. in }Q_T.
\end{equation}
Finally, we claim that the non-negative sequence $\left(R_k\right)_{k} \subset L^1(Q_T)$ defined by
\[
R_k \coloneqq (\theta_{\Tt^k, \tau^k} - \rho) (f(\theta_{\Tt^k, \tau^k}) - f(\rho))\geq 0
\]
converges to $0$ strongly in $L^1(Q_T)$.
Indeed, thanks to~\eqref{eq:theta*f} and \eqref{eq:Minty}:
\begin{multline*}
0\leq \|R_k\|_{L^1(Q_T)}=\int_{Q_T}R_k
=
\int_{Q_T} (\theta_{\Tt^k, \tau^k} - \rho) (f(\theta_{\Tt^k, \tau^k}) - f(\rho))
\\
=
\int_{Q_T}\theta_{\Tt^k, \tau^k} f_{\Tt^k, \tau^k}
-\int_{Q_T}\rho f_{\Tt^k, \tau^k}
-\int_{Q_T}\theta_{\Tt^k, \tau^k} f(\rho)
+ \int_{Q_T}\rho f(\rho)
\\
\xrightarrow[k\to\infty]{}
\int_{Q_T}\rho\mathfrak{f}
-\int_{Q_T}\rho \mathfrak{f}
-\int_{Q_T}\theta f(\rho)
+\int_{Q_T}\rho f(\rho)=0
\end{multline*}
since we already proved that $\rho=\theta$ and $\mathfrak f=f(\rho)$.
This strong convergence implies almost everywhere convergence $R_k(t,x)\to 0$ in $Q_T$, up to a subsequence if needed.
As $f$ is increasing, this also implies almost everywhere convergence of $\theta_{\Tt^k, \tau^k}$ towards $\rho$.
Vitali's convergence theorem then provides \eqref{eq:convL1.theta}, and \eqref{eq:convL1.rho} finally follows from Lemma~\ref{lem:limitid}.
\end{proof}
% % % % % % % % % % % % % % % % % % % % % % % % % % % %
\subsection{Asymptotic lower bound for the discrete Fisher information}\label{ssec:conv.Fisher}
 In this section we show that the $\Gamma$-$\liminf$ of the total dissipation functional
\[
\sum_{n=0}^{N-1}\tau \mc{R}_{\psi}(\btheta^{n+1/2})
\]
with respect to the weak $L^1$ convergence is bounded from below by the total dissipation of the continuous system, i.e. the dissipation rate \eqref{eq:dissipationrate} integrated in time.
As this is a statement on the functional itself, throughout this section we will consider arbitrary discrete curves $(\btheta^{n+1/2})_n$ that possibly do not solve \eqref{eq:discretePDE}--\eqref{eq:fluxes.linear}--\eqref{eq:rhoG}.

Let us first consider the easy case of a trivial background potential $V\equiv 0$.
In that case, according to \eqref{eq:Dpsistar}, the dissipation is exactly given by the semi-norm
\[
\frac{1}{2} \mc{D}_{\psi^*}(\brho)
= |\sqrt{\brho}|_{1,\Tt}^2
=\sum_{\sigma \in \Sigma} \frac{m_\sigma}{d_\sigma} |\sqrt{\rho_K} - \sqrt{\rho_L}|^2\,.
\]
Using the previous adaptation of \cite[Lemma 9.3]{eymard2000finite} to control $L^2$ space difference quotients by the discrete seminorms, there exists a constant $C>0$ only depending on $\Omega$ such that, for any compact subset $\omega \subset\subset \Omega$ and $h \in \mathbb{R}^d$ such that $|h| < \mathrm{dist}(\omega, \partial \Omega) $,

\begin{equation}
\label{eq:H1_diff-quotient_xi}
\int_0^T \int_{\omega}  \left|\sqrt{\theta_{\Tt,\tau} (t,x +h )}-\sqrt{\theta_{\Tt,\tau} (t,x )}\right|^2
\leq
\sum_n \tau | \sqrt{\btheta^{n+1/2}}|^2_{1,\Tt}  |h |\big[|h |+C\,\text{size}(\Tt)\big].
\end{equation}
In particular, consider any sequence of meshes and time steps such that $\size(\Tt^k)\rightarrow 0$ and $\tau^k \rightarrow 0$, and let $(\btheta^{n+1/2}_k)_{n=0}^{N-1}$ be any associated discrete curve (again, not necessarily solution to our discrete scheme).
Assume that the reconstruction $\theta_{\Tt^k,\tau^k} \in L^1(Q_T)$ from \eqref{eq:densreco} converges as
\begin{equation*}\label{eq:weakL1}
\theta_{\Tt^k,\tau^k}\rightharpoonup \theta \quad \text{weakly in } L^1(Q_T)\,.
\end{equation*}
Because $f(a,b)=|\sqrt a-\sqrt b|^2$ is convex and continuous, the left-hand side of \eqref{eq:H1_diff-quotient_xi} is lower-semicontinuous for the weak $L^1$ convergence, hence
\[
\int_0^T \left \|\sqrt{\theta(t,\cdot +h )} -\sqrt{ \theta(t,\cdot)}\right \|_{L^2(\omega)}^2 \ed t \,
\leq
|h |^2 \liminf_{k\rightarrow \infty}  \sum_n \tau^k | \sqrt{\btheta^{n+1/2}_k}|^2_{1,\Tt^k}
\]
and therefore, by classical characterization of $H^1(\Omega)$ by difference quotients,
\begin{equation}\label{eq:h1sqrt}
\int_0^T \|\nabla \sqrt{\theta(t,\cdot)} \|^2_{L^2(\Omega)} \ed t
\leq
\liminf_{k\rightarrow \infty}  \sum_n \tau^k | \sqrt{\btheta^{n+1/2}_k}|^2_{1,\Tt^k}
\,.
\end{equation}
This settles the case $V\equiv 0$.

In order to prove the analogue result in the presence of a non-zero potential $V$, we will rewrite below the dissipation functional as the sum of the above $H^1$ seminorm plus a linear term, which can be dealt with easily at least when $\nabla V\cdot n_{\partial \Omega}=0$ on the boundary.
More precisely, at the continuous level, if $\nabla V\cdot n_{\partial \Omega}=0$ on the boundary we have the identity
\begin{equation}
\label{eq:identity_Vn=0}
\int \pi \left|\nabla \sqrt{\frac{\rho}{\pi}} \right|^2 = \frac{1}{2} \int \rho \left(\frac{|\nabla V|^2}2 -\Delta V\right) + \int  \left|\nabla \sqrt{\rho} \right|^2\,.
\end{equation}
This formula can be directly related to the expression for the discrete Fisher information \eqref{eq:Dpsistar}, decomposed into $\frac{1}{2} \mc{R}_{\psi}(\brho) =  \mc{I}(\brho) + |\sqrt{\brho}|_{1,\Tt}^2$ as in~\eqref{eq:Fisher.1}--\eqref{eq:Fisher.I}.
The case $\nabla V\cdot n_{\partial \Omega}\not\equiv 0$ on the boundary will be handled via an approximation argument from~\cite{droniou2002density}.

\begin{proposition}\label{prop:fisher}
Let $(\btheta^{n+1/2}_k)_{n=0}^{N-1} \in (\mathbb{R}^\Tt_{+})^N$ be a given discrete curve associated with $\Tt^k,\tau^k$ with reconstruction $\theta_{\Tt^k,\tau^k}\in L^1(Q_T)$ as in \eqref{eq:densreco}, and suppose that
\[
\theta_{\Tt^k,\tau^k} \rightharpoonup \theta \,, \quad \text{weakly in } L^1(Q_T)\,
\]
for some $\theta\in L^1(Q_T)$.
Then
\begin{equation*}
\liminf_{k\rightarrow \infty}  \sum_n \tau^k \mc{R}_{\psi}(\btheta^{n+1/2}_k)
\geq
2\int_{Q_T} \pi \left|\nabla \sqrt{\frac{\theta}{\pi}} \right|^2\,.
\end{equation*}
\end{proposition}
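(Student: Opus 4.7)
The plan is to exploit the decomposition $\tfrac{1}{2}\mc R_\psi(\brho) = \mc I(\brho) + |\sqrt{\brho}|_{1,\Tt}^2$ from \eqref{eq:Fisher.1}--\eqref{eq:Fisher.I} together with its continuous analog \eqref{eq:identity_Vn=0}, and pass to the liminf in each piece separately. I would first handle the simplifying case $\nabla V \cdot n_{\partial\Omega} \equiv 0$ for which both decompositions are compatible, and then recover the general case by a density argument.

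For the seminorm piece $|\sqrt{\brho}|_{1,\Tt}^2$, I apply the $L^2$ difference-quotient bound \eqref{eq:H1_diff-quotient_xi} to $\btheta^{n+1/2}_k$ (which holds regardless of $V$) and invoke the weak $L^1$ lower-semicontinuity of the jointly convex non-negative functional $(a,b)\mapsto |\sqrt a-\sqrt b|^2$. Passing to the liminf against $\theta_{\Tt_k,\tau_k} \rightharpoonup \theta$ with $\size(\Tt_k)\to 0$ and invoking the standard characterization of $H^1(\Omega)$ via difference quotients yields $\sqrt\theta \in L^2(0,T;H^1(\Omega))$ together with
\[
\int_{Q_T}|\nabla\sqrt\theta|^2 \leq \liminf_{k\to\infty}\sum_n \tau_k \bigl|\sqrt{\btheta^{n+1/2}_k}\bigr|_{1,\Tt_k}^2,
\]
exactly as outlined in the paragraph preceding the statement.

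For the linear part $\mc I$, assume temporarily that $\nabla V\cdot n_{\partial\Omega} \equiv 0$. Taylor-expanding $e^{(V_K-V_L)/2}-1$ up to second order on each summand of \eqref{eq:Fisher.I} and symmetrizing over $K,L$ gives
\[
\mc I(\brho) = -\frac{1}{2}\sum_K m_K\rho_K \Delta_\Tt V_K + \frac{1}{8}\sum_\sigma \frac{m_\sigma}{d_\sigma}(\rho_K+\rho_L)(V_K-V_L)^2 + R_\Tt(\brho),
\]
where $\Delta_\Tt V_K \coloneqq \frac{1}{m_K}\sum_{\sigma \in \Sigma_K}\frac{m_\sigma}{d_\sigma}(V_L-V_K)$ is the TPFA Laplacian and $R_\Tt(\brho) = O(\size(\Tt))\,\brho[\Omega]$ is controlled by \eqref{eq:volumebound} together with $V\in C^2$. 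The boundary condition on $V$ ensures the uniform consistency $\Delta_\Tt V_K \to \Delta V(x_K)$ without boundary defect, while the asymptotic isotropy \eqref{eq:isocond}--\eqref{eq:isovanish} combined with the equiintegrability of $\{\theta_{\Tt_k,\tau_k}\}$ inherited from the entropy bound \eqref{eq:bound_entropy} allow me to identify
\[
\frac{1}{8}\sum_\sigma\frac{m_\sigma}{d_\sigma}(\rho_K+\rho_L)(V_K-V_L)^2 = \frac{1}{4}\sum_K m_K \rho_K |\nabla V(x_K)|^2 + o(1)\,\brho[\Omega],
\]
the $o(1)$ absorbing both the Taylor tail and the anisotropic cells. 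Testing the weak $L^1$ convergence $\theta_{\Tt_k,\tau_k}\rightharpoonup \theta$ against the continuous functions $\Delta V$ and $|\nabla V|^2$ then gives
\[
\sum_n \tau_k \mc I(\btheta^{n+1/2}_k) \xrightarrow[k\to\infty]{} \frac{1}{2}\int_{Q_T}\theta\left(\frac{|\nabla V|^2}{2}-\Delta V\right).
\]
Summing with the previous paragraph and invoking \eqref{eq:identity_Vn=0} proves the proposition under the boundary condition.

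For general $V\in C^2(\overline\Omega)$, I would invoke the density technique of \cite{droniou2002density} to construct $V_\eps\in C^2(\overline\Omega)$ with $\nabla V_\eps\cdot n_{\partial\Omega} \equiv 0$ and $V_\eps\to V$ strongly. Controlling $\pi_\sigma/\pi^\eps_\sigma$ and $\sqrt{\rho_K/\pi_K}-\sqrt{\rho_K/\pi^\eps_K}$ via $\|V-V_\eps\|_\infty$ produces a uniform-in-$\Tt$ comparison $\mc R^V_\psi(\brho) \geq (1-\omega_\eps)\mc R^{V_\eps}_\psi(\brho) - C_\eps\,\brho[\Omega]$ with $\omega_\eps,C_\eps\to 0$, while $\pi_\eps|\nabla\sqrt{\theta/\pi_\eps}|^2 \to \pi|\nabla\sqrt{\theta/\pi}|^2$ follows from $V_\eps\to V$ in $C^1$. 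Applying the restricted case to $V_\eps$ and letting $\eps\to 0$ concludes. The main obstacle lies precisely in this last step: making the two asymptotics (the uniform-in-$\Tt$ comparison and the $\eps\to 0$ limit of the continuous target) cooperate without losing constants, which in turn is why the paper handles the boundary-compatible case first rather than attempting a direct weighted-$H^1$ lower-semicontinuity argument.
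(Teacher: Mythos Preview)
Your treatment of the boundary-compatible case $\nabla V\cdot n_{\partial\Omega}=0$ is essentially the paper's: same decomposition \eqref{eq:Fisher.1}, same difference-quotient argument for the seminorm, and a Taylor expansion on $\mc I$ that is slightly sharper than the paper's one-sided inequality $e^x-1\geq x+x^2/2$ but leads to the same conclusion. (A minor slip: you invoke the entropy bound \eqref{eq:bound_entropy} for equiintegrability, but the proposition concerns arbitrary curves; equiintegrability comes instead from the assumed weak $L^1$ convergence via Dunford--Pettis.)

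The general case, however, has a real gap. Droniou's density result gives $\|V-V_\eps\|_{W^{1,p}(\Omega)}\to 0$ for any \emph{finite} $p$, not $C^1$ convergence; so your claim that $\pi_\eps|\nabla\sqrt{\theta/\pi_\eps}|^2\to\pi|\nabla\sqrt{\theta/\pi}|^2$ ``follows from $V_\eps\to V$ in $C^1$'' is unsupported. More seriously, the multiplicative comparison $\mc R^V_\psi(\brho)\geq(1-\omega_\eps)\mc R^{V_\eps}_\psi(\brho)-C_\eps\,\brho[\Omega]$ that you claim from $\|V-V_\eps\|_\infty$ alone cannot hold. Writing $b_K=\sqrt{\rho_K/\pi_K}$ and $b_K^\eps=\sqrt{\rho_K/\pi_K^\eps}$, the pointwise bound $|b_K-b_K^\eps|\leq C\|V-V_\eps\|_\infty\sqrt{\rho_K}$ is fine, but inserting it into the edge differences and squaring produces a remainder $\sum_\sigma\frac{m_\sigma}{d_\sigma}(\rho_K+\rho_L)$ that scales like $\size(\Tt)^{-2}\,\brho[\Omega]$ and blows up. The genuine obstruction is the discrete gradient of $V-V_\eps$ hidden in $b_K^\eps-b_L^\eps$, which $\|V-V_\eps\|_\infty$ does not control.

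The paper circumvents this by \emph{not} comparing the full $\mc R_\psi$. It keeps the quadratic piece $\mc I_1$ and the seminorm intact (neither requires integration by parts), and only replaces the linear term $\mc I_2$ by $\mc I_2^\eps$. The error $|\mc I_2-\mc I_2^\eps|$ involves $|\theta_K-\theta_L|\,|(V_K-V_L)-(V_K^\eps-V_L^\eps)|$; via Young with a parameter $\eta$ this splits into $\eta|\sqrt{\btheta}|_{1,\Tt}^2$ (absorbed into the seminorm) plus $\eta^{-1}\int\theta|\nabla(V-V^\eps)|^2$. The latter is then controlled by H\"older and the Sobolev embedding for $\sqrt\theta\in L^2_tH^1_x$, choosing $p$ large but finite so that $\|\nabla(V-V^\eps)\|_{L^{2p}}\to 0$. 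The same device handles the continuous side, replacing $\int\theta\Delta V^\eps$ by $\int\nabla\theta\cdot\nabla V$ at the cost of another $\eta$-loss. This is precisely the ``cooperation of two asymptotics'' you flag as the obstacle, and it cannot be bypassed by a direct comparison of the two $\mc R_\psi$'s.
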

\begin{proof}
Consider first $\nabla V \cdot n_{\partial \Omega} = 0$ on the boundary, and assume that the $\liminf$ in the statement is finite (otherwise the statement is vacuous).
By \eqref{eq:control_sum_H1} we readily obtain that
$$
\liminf \limits_{k\to \infty}
\sum_n \tau^k  \left|\sqrt{\btheta^{n+1/2}_k}\right|^2_{1,\Tt}
\leq
2 C\limsup_{k\rightarrow \infty} \|\theta_{\Tt^k,\tau^k}\|_{L^1(Q_T)}
+
\liminf_{k\rightarrow \infty}  \sum_n \tau^k \mc{R}_{\psi}(\btheta^{n+1/2}_k)
<+\infty
$$
is finite.
By the previous considerations for $V\equiv 0$ we see that \eqref{eq:h1sqrt} holds, hence comparing \eqref{eq:Fisher.1}--\eqref{eq:Fisher.I} on the one hand and \eqref{eq:identity_Vn=0} on the other hand, clearly it suffices to show:
\begin{equation}
\label{eq:linearpart}
\liminf_{k\rightarrow \infty} \sum_n \tau^k \mc{I}(\btheta^{n+1/2}_k)  \geq  \frac{1}{2} \int \theta \left(\frac{|\nabla V|^2}2 -\Delta V\right)\,.
\end{equation}
To this end we first observe that
\[
\sqrt{\frac{\pi_L}{\pi_K}}-1 = \exp\left(\frac{V_K-V_L}{2}\right) -1 \geq \frac{V_K-V_L}{2} + \frac{(V_K-V_L)^2}{8} .
\]
Therefore we can write $\mc{I}(\brho) \geq \mc{I}_1(\brho) + \mc{I}_2(\brho)$ where
\[
\mc{I}_1(\brho)\coloneqq \sum_{K \in \Tt} \sum_{\sigma \in \Sigma_K}  \frac{m_\sigma}{d_\sigma} \rho_K \frac{(V_K-V_L)^2}{8} \,,
\]
\[ \mc{I}_2(\brho)\coloneqq \sum_{K \in \Tt} \sum_{\sigma \in \Sigma_K}  \frac{m_\sigma}{d_\sigma} \rho_K \frac{V_K-V_L}{2}\,.
\]
For fixed $K\in\Tt$ we have
\[
\sum_{\sigma \in \Sigma_K}  \frac{m_\sigma}{d_\sigma} \frac{V_K-V_L}{2}
\geq
-\frac{1}{2} \int_K \Delta V - C \size(\Tt^k),
\]
where $C$ only depends on $V$.

Since $V$ belongs to $C^2(\overline \Omega)$, and because of the orthogonality condition~\eqref{eq:ortho}, 
there holds 
\[
\frac{V_K-V_L}{d_\sigma} = - \nabla V(x_K) \cdot n_{K\sigma} + \mc{O}(d_\sigma) = - \nabla V(x) \cdot n_{K\sigma} + \mc{O}(d_\sigma + |x-x_K|)
\]
for any $x \in K$. Therefore, we deduce from~\eqref{eq:volumebound} and \eqref{eq:dsigmah} that 
\[
\frac{V_K-V_L}{d_\sigma} = - \nabla V(x) \cdot n_{K\sigma} + \mc{O}(\operatorname{size}({\mc T}^k)).
\]
Using the isotropy condition \eqref{eq:isocond} and then integrating w.r.t. $x\in K$ yields
\begin{equation}\label{eq:trucmuche}
\sum_{\sigma \in \Sigma_K}  \frac{m_\sigma}{d_\sigma} \frac{|V_K-V_L|^2}{8} \geq(1-\varepsilon_{\Tt^k}) \left(\int_K \frac{|\nabla V|^2}{4} - C \size(\Tt^k) \right)
= \int_K \frac{|\nabla V|^2}{4} + o(1),
\end{equation}
where the $o(1)$ remainder is uniform both in $K\in \Tt^k$ and $n=1,\dots N-1$ as $k\to\infty$.
Hence we obtain that
\[
\sum_n \tau^k \mc{I}_1(\btheta^{n+1/2}_k) \geq  \int_{Q_T^k} \theta_{\Tt^k,\tau^k} \frac{|\nabla V|^2}{4}\,+\, o(1)
\]
and
\[
\sum_n \tau^k \mc{I}_2(\btheta^{n+1/2}_k) \geq - \int_{Q_T} \theta_{\Tt^k,\tau^k} \Delta V \,+\, o(1),
\]
where we have set
\[
Q_T^k \coloneqq (0,T)\times(\Omega\setminus \Omega^k_{\mathrm{iso}}).
\]
Since we assume $ \theta_{\Tt^k,\tau^k}\rightharpoonup\theta$ weakly in $L^1(Q_T)$, the  inequality involving $\mc I_2$ immediately passes to the limit.
For the $\mc I_1$ lower bound, the weak convergence implies that $\{\theta_{\Tt^k,\tau^k}\}_k$ is equi-integrable.
Owing to our standing assumption \eqref{eq:isovanish} we see that $\meas\left(Q\setminus Q_T^k\right)\to 0$ hence the $\mc I_1$ inequality also passes to the limit and our claim \eqref{eq:linearpart} follows.

Let us finally settle the case $\nabla V\cdot n_{\partial \Omega}\neq 0$ on the boundary.
Fix $\eps>0$ and take an approximation $V^\eps\in C^\infty_c(\mathbb{R}^d)$ of $V$ satisfying
\[
\nabla V^\eps \cdot n_{\partial \Omega} = 0 ~~ \text{ on } \partial \Omega, \quad \|V-V^\eps\|_{W^{1,p}(\Omega)} \leq \eps,
\]
for some $p\in[1,\infty)$ to be chosen later.
The existence of such a function for arbitrarily large but finite $p\geq 1$ is due to Droniou \cite{droniou2002density}.
Define $\mc{I}_2^\eps$ in the obvious way, simply substituting $V^\eps$ for $V$ in the previous definition of $\mc I_2$, and for any discrete function $\bs\rho\in\R_+^{\Tt}$ decompose now $\frac 12\mc R_\psi(\bs\rho)=\mc{I}(\brho) +|\sqrt{\brho}|_{1,\Tt}^2\geq \mc{I}_1(\brho)+\mc{I}_2(\brho) +|\sqrt{\brho}|_{1,\Tt}^2$ as
\begin{equation}
\label{eq:R_I_Ieps}
\frac{1}{2} \mc{R}_\psi(\brho)
\geq
\left[\mc{I}_1(\brho) + \mc{I}_2^\eps(\brho)  +  |\sqrt{\brho}|_{1,\Tt}^2\right]
+
\left[\mc{I}_2(\brho)-\mc{I}_2^\eps(\brho)\right]\,.
\end{equation}
Let us first estimate the difference $\mc{I}_2-\mc{I}^\eps_2$.
To this end, pick an arbitrary $\eta>0$ small, and for any $\btheta\in\R_+^\Tt$ with spatial reconstruction $\theta_\Tt\in L^1(\Omega)$ write
\[
\begin{aligned}
\left| \mc{I}_2(\btheta^{ })-  \mc{I}_2^\eps(\btheta^{ })\right|
&
\leq    \sum_{\sigma\in\Sigma} \frac{m_\sigma}{2d_\sigma}|\theta_K^{ } -\theta_L^{ }| | V_K- V_K^\eps -  V_L + V_L^\eps|
\\
& =
\sum_{\sigma\in\Sigma} \frac{m_\sigma}{2d_\sigma}|\sqrt{\theta_K} -\sqrt{\theta_L}|\cdot \left(\sqrt{\theta_K} +\sqrt{\theta_L}\right) | V_K- V_K^\eps -  V_L + V_L^\eps|
\\
&
\leq   \left(  \eta  \left|\sqrt{\btheta^{ }}\right|_{1,\Tt}^2 + \frac{1}{4\eta}  \sum_{\sigma\in\Sigma} \frac{m_\sigma}{d_\sigma} \left|\sqrt{\theta_K^{ }} +\sqrt{\theta_L^{ }}\right|^2 \left| V_K- V_K^\eps -  V_L + V_L^\eps\right|^2 \right)
\\
&
\leq   \left(  \eta  \left|\sqrt{\btheta^{ }}\right|_{1,\Tt}^2 + \frac{1}{2\eta}  \sum_{\sigma\in\Sigma} \frac{m_\sigma}{d_\sigma} \left(\theta_K^{ } +\theta_L^{ }\right)\, \left| V_K- V_K^\eps -  V_L + V_L^\eps\right|^2 \right).
\end{aligned} 
\]
Similar arguments as those employed to establish~\eqref{eq:trucmuche} show that
\[
 \sum_{\sigma \in \Sigma_K} \frac{m_\sigma}{2d_\sigma} {| V_K- V_K^\eps -  V_L + V_L^\eps|^2} \leq  \frac{1}{m_K}\int_K |\nabla (V-V^\eps)|^2 + C_\eps (\varepsilon_\Tt + \size(\Tt)),
 \qquad \forall\,K\in \Tt_\mathrm{iso},
\]
where the constant $C_\eps$ depends on $V-V^{\eps}$ but not on the mesh.
On the other hand for $K\not\in \Tt_\mathrm{iso}^k$ we simply write
\[
\begin{aligned}
 \forall\,K\not\in \Tt_\mathrm{iso}^k,\qquad \sum_{\sigma \in \Sigma_K} \frac{m_\sigma}{2d_\sigma} {| V_K- V_K^\eps -  V_L + V_L^\eps|^2}
 & \leq
 \sum_{\sigma \in \Sigma_K} \frac{m_\sigma d_\sigma}{2} \left(\|\nabla V\|_\infty +\|\nabla V^\eps\|_\infty\right)^2
 \\
 & \overset{\eqref{eq:volumebound}}{\leq} d\zeta m_k \left(\|\nabla V\|_\infty +\|\nabla V^\eps\|_\infty\right)^2
 \\
  & \leq C_{\eps}m_K,
 \end{aligned}
\]
where $C_\eps$ again depends on $\eps$ but not on $\Tt^k$.
Hence we find
\begin{multline*}
\left|\mc{I}_2(\btheta )-\mc{I}_2^\eps(\btheta )\right|
 \leq \eta   |\sqrt{\btheta }|_{1,\Tt}^2 + \frac{1}{\eta} \int_{\Omega} \theta_{\Tt} |\nabla (V- V^\eps)|^2
 \\
 + C_\eps  \left((\varepsilon_\Tt + \size(\Tt)) \|\theta_{\Tt}\|_{L^1(\Omega)}
 +\|\theta_\Tt\| _{L^1(\Omega\setminus\Omega_{\mathrm{iso}})}\right),
\end{multline*}
which inserted into \eqref{eq:R_I_Ieps} yields
\begin{multline*}
\frac{1}{2} \mc{R}_\psi(\btheta)
\geq
\left[\mc{I}_1(\btheta) + \mc{I}_2^\eps(\btheta)  +  (1-\eta)|\sqrt{\btheta}|_{1,\Tt}^2\right]
\\
-\frac{1}{\eta} \int_{\Omega} \theta_{\Tt} |\nabla (V- V^\eps)|^2
-C_\eps  \left((\varepsilon_\Tt + \size(\Tt)) \|\theta_{\Tt}\|_{L^1(\Omega)}  +\|\theta_\Tt\| _{L^1(\Omega\setminus\Omega_{\mathrm{iso}})}\right).
\end{multline*}
Evaluating for a discrete curve $(\bs \theta^{n+1/2}_k)$ and summing over $n$, the first three terms in the right-hand side pass to the $\liminf$ as soon as $\theta_{\Tt^k,\tau^k}\rightharpoonup\theta$ weakly in $L^1(Q_T)$, exactly as in the previous case where $\nabla V\cdot n_{\partial \Omega}=0$ on the boundary.
Moreover the non-isotropic term vanishes as $k\to\infty$ -- as before, $\theta_{\Tt^k,\tau^k}$ is equiintegrable and $Q^k_T=(0,T)\times (\Omega\setminus\Omega_{\mathrm{iso}})$ has vanishing measure -- hence for fixed $\eta>0$ and $\eps>0$ we obtain
\begin{multline*}
 \liminf_{k\rightarrow\infty} \sum_n \tau^k \mc{R}_\psi(\btheta^{n+1/2}_k)
 \\
\geq
\int_{Q_T} \theta \left(\frac{|\nabla V|^2}2 -\Delta V^\eps\right)
+ 2(1-\eta) \int_{Q_T} |\nabla\sqrt{ \theta}|^2
-\frac{2}{\eta} \int_{Q_T} \theta |\nabla (V-V^\eps)|^2
\end{multline*}
This shows as a byproduct that, whenever the left-hand side is finite, $\| \nabla \sqrt{\theta}\|_{L^2(Q_T)}$ is finite too and $\nabla\theta =2\sqrt\theta\nabla\sqrt\theta\in L^2_tL^1_x$ due to $\sqrt\theta\in L^\infty_tL^2_x$.
At this stage one would wish to substitute $-\int \theta \Delta V^\eps$ by the desired $+\int\nabla\theta\cdot\nabla V$.
To this end we use the exact same strategy as before, but this time at the continuous level:
Since $\nabla V^\eps\cdot n_{\partial \Omega}$ on the boundary and $\theta \in L^2_tW^{1,1}_x$ we can legitimately integrate by parts
$$
\begin{aligned}
 \left| \int_{Q_T} \theta \Delta V^\eps+ \int_{Q_T}   \nabla \theta \cdot  \nabla V  \right|
  &
  = \left| \int_{Q_T} \nabla\theta\cdot (\nabla V^\eps-\nabla V)\right|
  \\
  & = 2 \left| \int_{Q_T}\nabla\sqrt\theta\cdot  \sqrt{\theta}(\nabla V^\eps-\nabla V)\right|
  \\
  & \leq 2\eta \int_{Q_T}\left|\nabla\sqrt\theta\right|^2 + \frac{2}{\eta}\int_{Q_T}\theta\left|\nabla V^\eps-\nabla V\right|^2
\end{aligned}
$$
and thus
\begin{multline*}
\liminf_{k\rightarrow\infty} \sum_n \tau^k \mc{R}(\btheta^{n+1/2}_k)
\\
\geq
\int_{Q_T} \left( \theta \frac{|\nabla V|^2}2+ \nabla \theta \cdot  \nabla V \right)
+ (2-4\eta) \int_{Q_T} |\nabla\sqrt{ \theta}|^2
-\frac{4}{\eta} \int_{Q_T} \theta |\nabla (V-V^\eps)|^2\,.
\end{multline*}
Fix now $q>1$ such that, for any $f\in H^1(\Omega)$,
\[
\|f\|_{L^{2q}(\Omega)} \leq C_q \|f\|_{H^1(\Omega)}
\]
for some constant $C_q>0$ depending only on $\Omega$ and $q$.
Choosing $p$ as $1/p+1/q =1$ we estimate in the last term
\[
\begin{aligned}
\int_{Q_T} \theta |\nabla (V-V^\eps)|^2
& \leq
\|\nabla (V-V^\eps)\|_{L^{2p}(\Omega)}^2 \int_0^T \| \theta(t,\cdot) \|_{L^q(\Omega)} \ed t
\\
& =
\|\nabla (V-V^\eps)\|_{L^{2p}(\Omega)}^2 \int_0^T \| \sqrt{\theta}(t,\cdot) \|_{L^{2q}(\Omega)}^2 \ed t
\\
& \leq
{C_q^2}  \|\nabla (V-V^\eps)\|_{L^{2p}(\Omega)}^2 \|\sqrt{\theta}\|_{L^{2}(0,T;H^1(\Omega))}^2
\end{aligned}
\]
Choosing $\eps>0$ sufficiently small so that
\[
C_q^2 \|\nabla (V-V^\eps)\|^{2}_{L^{2p}(\Omega)} \leq \eta^2
\]
gives
\begin{multline*}
\liminf_{k\rightarrow\infty} \sum_n \tau^k \mc{R}(\btheta^{n+1/2}_k)
\\
\geq
\int_{Q_T} \underbrace{\left( \theta \frac{|\nabla V|^2}2+ \nabla \theta \cdot  \nabla V  + 2 \left|\nabla\sqrt\theta\right|^2\right)}_{=2\pi \Big|\nabla \sqrt{\frac{\theta}{\pi}} \Big|^2}
-4\eta \int_{Q_T} |\nabla\sqrt{ \theta}|^2
-4\eta,
\end{multline*}
and since $\eta>0$ was arbitrary the proof is complete.
\end{proof}
%
% % % % % % % % % % % % % % % % % % % % % % % % % % % % % % % % % % % % % % % % % % % % % % %
\subsection{Asymptotic lower bound for the discrete Benamou-Brenier functional}\label{ssec:conv.BB}
In this section we establish a lower bound for the kinetic part of the dissipation, analogous to the previous section but now for
\[
\sum_{n=0}^{N-1}\tau \mc{D}_\psi(\btheta^{n+1/2},\bs{F}^{n+1/2}) = \sum_{n=0}^{N-1}\tau \sum_{\sigma\in \Sigma} \frac{m_\sigma}{d_\sigma} \theta^{n+1/2}_\sigma \psi\left( \frac{d_\sigma F_{K\sigma}^{n+1/2}}{\theta_\sigma^{n+1/2}} \right)\,.
\]
The lower bound will be given by the Benamou-Brenier functional (see \cite[Section 5.3.1]{santambrogio2015optimal}), which is the map given by $(\rho,F) \in L^1(Q_T;\mathbb{R}^{d+1}) \mapsto \int_{Q_T} B(\rho,F)$ with the function $B$ defined as in \eqref{eq:BB}.

Just as in section~\ref{ssec:conv.Fisher} for the Fisher information, this statements is about how the geometry discretization allows a consistent interplay between the discrete $\mc D_\psi$ dissipation and the continuous Benamou-Brenier functional functional, and does not deal with particular solutions of our scheme properly speaking.
Accordingly, consider $\theta \in L^1(Q_T)$ and $F\in L^1(Q_T;\mathbb{R}^d)$ that are obtained as weak $L^1$ limits of arbitrary sequences of reconstructed densities $\{\theta_{\Tt^k,\tau^k}\}_{k}$ and fluxes $\{F_{\Sigma^k,\tau^k}\}_{k}$.
Our strategy below is similar to \cite{lavenant2021unconditional}, where unconditional convergence of discrete to continuous optimal transport models is proved.
Recall that the Benamou-Brenier functional can be classically written (e.g. \cite[Proposition 5.18]{santambrogio2015optimal}) as
\begin{equation}
\label{eq:dualB}
\int_{Q_T} B(\theta,F)
=
\sup_{b\in C(\overline{Q}_T;\mathbb{R}^d)} \left\{ \langle b,F \rangle - \int_{Q_T} \theta \frac{|b|^2}{2}\right\},
\end{equation}
where the duality pairing is $\langle b,F\rangle=\int_{Q_T}b\cdot F $.
Consider first the case when \eqref{eq:dualB} is finite.
Then, for any arbitrary small $\eta>0$ we can find $b \in C(\overline{Q}_T;\mathbb{R}^d)$ such that
\begin{equation}\label{eq:etabound}
\begin{aligned}
\int_{Q_T}B(\theta,F) &\leq \langle b,F\rangle - \int_{Q_T} \theta \frac{|b|^2}{2} +\eta
& \leq \lim_{k\rightarrow \infty}  \langle b,F_{\Sigma^k,\tau^k} \rangle - \int_{Q_T} \theta_{\Tt^k,\tau^k} \frac{|b|^2}{2} +\eta ,
\end{aligned}
\end{equation}
and by density we can actually assume that $b \in C^1(\overline{Q}_T;\mathbb{R}^d)$.
At the discrete level, the analogue of \eqref{eq:dualB} is (by definition of the $\psi,\psi^*$ convex duality)
\begin{multline*}
\sum_n \tau \sum_\sigma \frac{m_\sigma \theta_\sigma^{n+1/2}}{d_\sigma} \psi\left( \frac{F_\sigma^{n+1/2} d_\sigma}{\theta_\sigma^{n+1/2}} \right)
\\
=\sup_{\mathbf{b}\in\mathbb{F}_\Tt}\left\{ \sum_{n,\sigma} \tau m_\sigma d_\sigma  b^{n+1/2}_{K\sigma} F^{n+1/2}_{K\sigma}-  \sum_n \tau \sum_\sigma \frac{m_\sigma \theta_\sigma^{n+1/2}}{d_\sigma}\psi^*\left( b_{K\sigma}^{n+1/2} d_\sigma\right)\right\}\,.
\end{multline*}
Let us take $b$ as in \eqref{eq:etabound}, and define
\[
b_{K\sigma}^{n+1/2} \coloneqq \frac{1}{\tau\meas(\Delta_\sigma)}\int_{t^n}^{t^{n+1}}\int_{\Delta_\sigma} b\cdot n_{K\sigma}
\]
so that
\[
\langle b,F_{\Sigma,\tau} \rangle
=
\sum_{n,\sigma}  \tau m_\sigma d_\sigma  b^{n+1/2}_{K\sigma}F^{n+1/2}_{K\sigma} \,.
\]
Moreover owing to $\theta^{n+1/2}_\sigma=\sqrt{\theta^{n+1/2}_K\theta^{n+1/2}_L}\leq \frac 12(\theta^{n+1/2}_K+\theta^{n+1/2}_L)$ there holds
\[
 \sum_n \tau \sum_\sigma \frac{m_\sigma \theta_\sigma^{n+1/2}}{d_\sigma}\psi^*\left( b_{K\sigma}^{n+1/2} d_\sigma\right)
\leq
\sum_n \tau \sum_K\theta_K^{n+1/2} \sum_{\sigma \in \Sigma_K} \frac{m_\sigma }{2 d_\sigma}\psi^*\left( b_{K\sigma}^{n+1/2} d_\sigma\right).
\]
Note that $|b_{K\sigma}^{n+1/2}|\leq \|b\|_{\infty}$ and, owing to our assumption \eqref{eq:dsigmah} on the meshes, $d_\sigma\leq \zeta \,\size(\Tt)$ is small.
Since $ \psi^*(\xi)=4(\cosh(\xi/2)-1)= |\xi|^2/2+\mc O(|\xi|^4)\leq \frac 12|\xi|^2(1+|\xi|)$ for small $\xi$ we can bound
$$
\psi^*\left( b_{K\sigma}^{n+1/2} d_\sigma\right)
\leq
\frac{d_\sigma^2}{2} \big|b_{K\sigma}^{n+1/2}\big|^2\big[1 + \|b\|_\infty d_\sigma\big]\,.
$$
Since $b\in C^1(\bar Q_T)$ is Lipschitz and $d_\sigma\leq \zeta \,\size(\Tt)$ we obtain
\[
\psi^*\left( b_{K\sigma}^{n+1/2} d_\sigma\right)
\leq
\frac{d_\sigma^2}{2} \left[(b(t,x)\cdot n_{K\sigma})^2 + C (\tau+\size(\Tt))\right],
\qquad (t,x) \in [t^{n}, t^{n+1}]\times K
\]
for some constant $C>0$ depending on $b,\zeta$, but not on $\Tt,\tau$.
Leveraging one last time the Lipschitz regularity of $b$ we see that
\[
\psi^*\left( b_{K\sigma}^{n+1/2} d_\sigma\right) \leq \frac{d_\sigma^2}{2} \left[ \frac{1}{\tau m_K} \int_{t^n}^{t^{n+1}}\int_{K}(b(t,x)\cdot n_{K\sigma})^2 + C (\tau+\size(\Tt))\right]\,.
\]

Hence, using yet again the mesh isotropy, we get
\[
\begin{aligned}
 \sum_{n,K}\tau &\theta_K^{n+1/2} \sum_{\sigma \in \Sigma_K} \frac{m_\sigma }{2 d_\sigma}\psi^*\left( b_{K\sigma}^n d_\sigma\right)\\
 &\leq \sum_{n,K} {\theta_K^{n+1/2}}\sum_{\sigma \in \Sigma_K}\left[\int_{t^n}^{t^{n+1}} \int_K  \frac{m_\sigma d_\sigma }{4 m_K }  (b(t,x)\cdot n_{K\sigma})^2 + \tau \frac{m_\sigma d_\sigma}{2}C (\tau+\size(\Tt))\right]
 \\&
 \overset{\eqref{eq:isocond}--\eqref{eq:volumebound}}{\leq}
 (1+\eps_{\Tt}) \int_{Q_T} \theta_{\Tt,\tau} \frac{|b|^2}{2} + \frac{\zeta\|b\|_\infty^2}{2}\int_0^T \int_{\Omega\setminus \Omega_{\mathrm{iso}}} \theta_{\Tt,\tau} + \frac{Cd \zeta}{2} (\tau+\size(\Tt))\|\theta_{\Tt,\tau}\|_{L^1(Q_T)}
 \\&
\leq
\int_{Q_T}  \theta_{\Tt,\tau} \frac{|b|^2}{2}+ r_{\Tt,\tau}
\end{aligned}
\]
where the remainder
\[
r_{\Tt,\tau} \coloneqq
\frac{\zeta\|b\|_\infty^2}{2}\int_0^T \int_{\Omega\setminus \Omega_{\mathrm{iso}}} \theta_{\Tt,\tau} + \left[\frac{Cd \zeta}{2} (\tau+\size(\Tt)) + \frac{\|b\|_\infty^2}{2} \eps_{\Tt} \right]
\|\theta_{\Tt,\tau}\|_{L^1(Q_T)} \,.
\]
Therefore, combining the previous estimates with \eqref{eq:etabound} we deduce that
\[
\sum_{n=0}^{N-1}\tau \mc{D}_\psi(\btheta^{n+1/2}_k)
\geq
\langle b, F_{\Sigma^k,\tau^k}\rangle - \int_{Q_T} \theta_{\Tt^k,\tau^k} \frac{|b|^2}{2} - r_{\Tt^k,\tau^k}\,.
\]
Since $(\theta_{\Tt^k,\tau^k})_k$ is converging weakly in $L^1$ it is also equiintegrable and $L^1$-bounded.
Due to our assumption \eqref{eq:isovanish} that $ \meas((0,T)\times(\Omega \setminus \Omega_{\mathrm{iso}}))\to 0$ we see that $r_{\Tt^k,\tau^k}\rightarrow 0$ as $k\rightarrow \infty$, and as a consequence
\[
\liminf_{k\rightarrow \infty} \sum_{n=0}^{N-1}\tau \mc{D}_\psi(\btheta^{n+1/2}_k)
\geq
\langle b, F\rangle - \int_{Q_T} \theta \frac{|b|^2}{2}
\geq
\int_{Q_T}  B(\theta,F) - \eta\,.
\]
Since $\eta>0$ was arbitrary the claim follows.

If now \eqref{eq:dualB} is infinite we can proceed in a similar fashion.
For any fixed $M>0$ large there is $b \in C^1(\overline{Q}_T;\mathbb{R}^d)$ such that
\[
\begin{aligned}
 \langle b,F\rangle - \int_{Q_T} \theta \frac{|b|^2}{2} \geq M.
\end{aligned}
\]
Following the same line of thought as above we find
\[
\liminf_{k\rightarrow \infty} \sum_{n=0}^{N-1}\tau \mc{D}_\psi(\btheta^{n+1/2}_k) \geq M.
\]
Since $M$ is arbitrary, this $\liminf$ is also infinite.
We have just proven the following:
\begin{proposition}\label{prop:bb}
Let $(\btheta^{n+1/2}_k)_{n=0}^{N-1} \in (\mathbb{R}^\Tt_{+})^N$ and $(\bs{F}^{n+1/2}_k)_{n=0}^{N-1} \in (\mathbb{F}_\Tt)^N$  be given discrete curves associated with the mesh $\Tt^k$ and time step $\tau^k$.
Suppose that their reconstruction $\theta_{\Tt^k,\tau^k}\in L^1(Q_T)$ and $F_{\Sigma^k,\tau^k}\in L^1(Q_T;\mathbb{R}^d)$ from \eqref{eq:densreco} and \eqref{eq:Freco} converge as
\[
\theta_{\Tt^k,\tau^k} \rightharpoonup \theta  \quad \text{weakly in } L^1(Q_T)\,, \quad F_{\Sigma^k,\tau^k} \rightharpoonup F  \quad \text{weakly in } L^1(Q_T;\mathbb{R}^d)\,.
\]
Then,
\begin{equation*}\label{eq:lscbb}
\liminf_{k\rightarrow \infty} \sum_{n=0}^{N-1}\tau \mc{D}_\psi(\btheta^{n+1/2}_k,\bs{F}^{n+1/2}_k) \geq \int_{Q_T}  B(\theta,F)\,.
\end{equation*}
\end{proposition}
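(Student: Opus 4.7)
The plan is to exploit the Legendre duality between $\mc D_\psi$ and $\mc D_\psi^\ast$ together with the classical Benamou--Brenier dual representation
\[
\int_{Q_T} B(\theta,F) = \sup_{b \in C(\overline Q_T;\R^d)} \left\{ \int_{Q_T} b\cdot F - \frac12 \int_{Q_T} \theta\,|b|^2 \right\}.
\]
Fix $\eta>0$ and, assuming finiteness of the continuous supremum, pick $b \in C^1(\overline Q_T;\R^d)$ achieving the sup up to $\eta$. (The case of infinite sup will be handled identically after replacing $\eta$ by a large target $M$ that blows up.) Define an admissible discrete test field by averaging $b\cdot n_{K\sigma}$ over the space-time diamond $(t^n,t^{n+1})\times \Delta_\sigma$:
\[
b^{n+1/2}_{K\sigma} \coloneqq \frac{1}{\tau\,\meas(\Delta_\sigma)} \int_{t^n}^{t^{n+1}} \int_{\Delta_\sigma} b\cdot n_{K\sigma}.
\]
By construction the pairing $\sum_{n,\sigma}\tau m_\sigma d_\sigma b^{n+1/2}_{K\sigma}F^{n+1/2}_{K\sigma}$ equals the reconstructed duality bracket $\langle b, F_{\Sigma,\tau}\rangle$, which converges to $\langle b, F\rangle$ by the weak $L^1$ assumption on the fluxes.

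The heart of the argument is then to control from above the discrete $\psi^\ast$-penalty
\[
\sum_{n}\tau \sum_\sigma \frac{m_\sigma \theta^{n+1/2}_\sigma}{d_\sigma}\psi^\ast(b^{n+1/2}_{K\sigma} d_\sigma)
\]
by the continuous quadratic term $\frac12\int_{Q_T}\theta\,|b|^2$ plus a vanishing error. Three ingredients are needed. First, $\theta^{n+1/2}_\sigma = \sqrt{\theta^{n+1/2}_K \theta^{n+1/2}_L} \leq \frac12(\theta^{n+1/2}_K+\theta^{n+1/2}_L)$ allows one to move the density weight from the edge $\sigma$ to the cells $K,L$. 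Second, since $\|b\|_\infty<\infty$ and $d_\sigma \leq \zeta\,\size(\Tt)\to 0$, the Taylor expansion $\psi^\ast(\xi) = \xi^2/2 + O(\xi^4)$ yields $\psi^\ast(b^{n+1/2}_{K\sigma}d_\sigma)\leq \frac{d_\sigma^2}{2}(b^{n+1/2}_{K\sigma})^2\bigl(1+O(\size(\Tt))\bigr)$, which after Lipschitz control of $b$ can be replaced pointwise by $\frac{d_\sigma^2}{2}(b(t,x)\cdot n_{K\sigma})^2 + O(\tau+\size(\Tt))$ for $(t,x)\in (t^n,t^{n+1})\times K$. Third, the asymptotic isotropy assumption \eqref{eq:isocond}, applied cell-by-cell on $\Tt_{\mathrm{iso}}$, reconstructs exactly the continuous quadratic form:
\[
\sum_{\sigma \in \Sigma_K} \frac{m_\sigma d_\sigma}{2m_K}(b(t,x)\cdot n_{K\sigma})^2 \leq (1+\eps_\Tt)|b(t,x)|^2, \qquad K\in \Tt_{\mathrm{iso}}.
\]
The contribution of the non-isotropic cells $K\not\in\Tt_{\mathrm{iso}}$ is handled by brute force using $\|b\|_\infty$ and \eqref{eq:volumebound}, multiplied by $\|\theta_{\Tt,\tau}\|_{L^1((0,T)\times(\Omega\setminus\Omega_{\mathrm{iso}}))}$, which tends to $0$ by the equiintegrability of $\{\theta_{\Tt^k,\tau^k}\}$ implied by weak $L^1$ convergence combined with $\meas(\Omega\setminus\Omega_{\mathrm{iso}}^k)\to 0$ from \eqref{eq:isovanish}.

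Putting all pieces together and using the definition of $\mc D_\psi$ as the Legendre transform of $\mc D_\psi^\ast$, one obtains
\[
\sum_n \tau \mc D_\psi(\btheta^{n+1/2}_k, \bs F^{n+1/2}_k) \geq \langle b, F_{\Sigma^k,\tau^k}\rangle - \frac12 \int_{Q_T} \theta_{\Tt^k,\tau^k}|b|^2 - r_{\Tt^k,\tau^k},
\]
where $r_{\Tt^k,\tau^k}\to 0$ is a remainder collecting the isotropy defect, the Taylor error and the $\eps_\Tt$ term. Taking $\liminf_{k\to\infty}$ and using weak $L^1$ convergence of $\theta$ and $F$ against the continuous function $b$ and $|b|^2/2$ gives $\liminf \geq \langle b,F\rangle - \frac12\int \theta|b|^2 \geq \int B(\theta,F)-\eta$, and sending $\eta\to 0$ concludes. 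The main obstacle I expect is precisely the isotropy step: one must transfer edge-based quadratic sums to the continuous $|b|^2$ with errors small enough to survive the $\liminf$, which forces the careful split between $\Tt_{\mathrm{iso}}$ and its complement and relies critically on \eqref{eq:isocond}--\eqref{eq:isovanish}.
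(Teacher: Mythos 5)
Your proposal follows essentially the same route as the paper's proof: the dual Benamou--Brenier representation with a near-optimal $C^1$ test field $b$, the diamond-averaged discrete field $b^{n+1/2}_{K\sigma}$ paired exactly with the reconstructed flux, the transfer of the edge weight $\theta_\sigma$ to the cells, the Taylor bound $\psi^*(\xi)\leq \frac{\xi^2}{2}(1+O(|\xi|))$ combined with the Lipschitz regularity of $b$, the isotropy condition on $\Tt_{\mathrm{iso}}$ with a brute-force $\|b\|_\infty$ bound on the complement, and the vanishing remainder via equiintegrability and \eqref{eq:isovanish}, before passing to the liminf and sending $\eta\to 0$ (resp.\ $M\to\infty$). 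The argument is correct and matches the paper's proof in all essential steps.
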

%
%
% % % % % % % % % % % % % % % % % % % % % % % % % % % % % % % % %
\subsection{Convergence of the scheme}\label{ssec:conv.final}
It remains now to prove that the curve $\rho=\theta$, constructed in the previous sections as the limit of solutions to our discrete scheme, is actually an EDI solution.
\begin{theorem}\label{thm:main}
Let $(\brho^{n}_k)_{n=0}^{N} \in (\mathbb{R}^{\Tt^k}_{+})^N$ and  $(\btheta^{n+1/2}_k)_{n=0}^{N-1} \in (\mathbb{R}^{\Tt^k}_{+})^N$ be the densities obtained as the unique solution of the scheme \eqref{eq:discretePDE}--\eqref{eq:fluxes.linear}--\eqref{eq:rhoG}, associated with a mesh $\Tt^k$ and time step $\tau^k$ satisfying {$\tau^k = o(\dmink)$}.
Let $\rho_{\Tt^k,\tau^k}\in L^1(Q_T)$, $\theta_{\Tt^k,\tau^k}\in L^1(Q_T)$ and $\rho^T_{\Tt^k,\tau^k}\in L^1(\Omega)$ be the reconstructions defined in \eqref{eq:densreco} and \eqref{eq:rhoTreco}.
Then, there exists $\rho\in L^1(Q_T)\cap C([0,T];L^1_{\text{w}}(\Omega))$ such that
\[
\rho_{\Tt^k,\tau^k}, \theta_{\Tt^k,\tau^k} \rightarrow \rho \text{ in } L^1(Q_T)
\qquad\text{and}\qquad
\rho^T_{\Tt^k,\tau^k} \rightharpoonup \rho^T=\rho(T)  \text{ weakly in } L^1(\Omega)
\]
as $k\to+\infty$.
Moreover, the limiting density $\rho$ is the unique EDI solution with initial datum $\rho^0$ in the sense of Definition~\ref{def:EDI}.
\end{theorem}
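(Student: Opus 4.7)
The plan is to pass to the limit in (a) the discrete continuity equation and (b) the summed fully discrete EDI of Proposition~\ref{prop:ede}, and then invoke uniqueness of EDI solutions to conclude that the convergence holds along the whole sequence. By Proposition~\ref{prop:convL1} and the preceding compactness arguments, along a subsequence (not relabeled) we already have $\rho_{\Tt^k,\tau^k}, \theta_{\Tt^k,\tau^k} \to \rho$ strongly in $L^1(Q_T)$ for a common limit $\rho$, $F_{\Sigma^k,\tau^k}\rightharpoonup F$ weakly in $L^1(Q_T;\R^d)$ by \eqref{eq:Flimit}, and $\rho^T_{\Tt^k,\tau^k}\rightharpoonup \rho^T$ weakly in $L^1(\Omega)$ by \eqref{eq:rhoTlimit}.

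For the continuity equation, pick $\varphi \in C^1(\overline Q_T)$ and set $\varphi^n_K = \frac{1}{m_K}\int_K \varphi(t^n,\cdot)$. Multiplying \eqref{eq:discretePDE} by $\tau\varphi^n_K$, summing over $K,n$, and performing a discrete summation by parts in $n$ (together with the flux-conservativity of $\bs F^{n+1/2}$) yields, after standard manipulations,
\[
\sum_{n=0}^{N-1}\sum_K m_K \rho_K^{n+1}\frac{\varphi^{n+1}_K-\varphi^n_K}{\tau}\cdot\tau + \sum_{n=0}^{N-1}\tau \sum_{\sigma\in\Sigma} m_\sigma d_\sigma F^{n+1/2}_{K\sigma}\frac{\varphi^n_L-\varphi^n_K}{d_\sigma} = \int_\Omega \rho^N_{\Tt,\tau}\varphi(T,\cdot) - \int_\Omega \rho^0_\Tt \varphi(0,\cdot).
\]
Classical finite-volume consistency gives $\tfrac{\varphi^{n+1}_K-\varphi^n_K}{\tau}\to \partial_t\varphi$ and $d\,\tfrac{\varphi^n_L-\varphi^n_K}{d_\sigma}n_{K\sigma}\to \nabla\varphi$ uniformly, and combined with the strong $L^1$ convergence of $\rho_{\Tt^k,\tau^k}$ and the weak $L^1$ convergence of $F_{\Sigma^k,\tau^k}$ we recover \eqref{eq:FP.cons.weak} with the prescribed initial datum $\rho^0$ and terminal datum $\rho^T$.

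For the EDI, sum Proposition~\ref{prop:ede} over $n$ to obtain
\[
\E_\Tt(\brho^N) + \sum_{n=0}^{N-1}\tau \mc{D}_\psi(\btheta^{n+1/2},\bs F^{n+1/2}) + \sum_{n=0}^{N-1}\tau\mc{R}_\psi(\btheta^{n+1/2}) \leq \E_\Tt(\brho^0).
\]
On the right-hand side, Jensen's inequality applied to $H$ gives $\mc{H}_\Tt(\brho^0)\leq \mc{H}(\rho^0)$, and continuity of $V$ together with $\rho^0_\Tt \to \rho^0$ in $L^1$ yields $\E_\Tt(\brho^0)\to\E(\rho^0)$ (with a $\limsup$ inequality sufficing). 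On the left-hand side, Propositions~\ref{prop:fisher} and \ref{prop:bb} provide the two $\Gamma$-$\liminf$ lower bounds $\int_{0}^T\mc R(\rho)$ and $\int_{Q_T}B(\rho,F)$ respectively (using $\theta=\rho$), while the convex energy functional $\sigma\mapsto \int_\Omega H(\sigma) + V\sigma$ is sequentially weak-$L^1$ lower semicontinuous, so $\liminf_k \E_\Tt(\brho^N_k)\geq \E(\rho^T)$. Taking the $\liminf$ thus yields
\[
\E(\rho^T) + \int_0^T \mc R(\rho) + \int_{Q_T} B(\rho,F) \leq \E(\rho^0),
\]
and since the particular $F$ we constructed is admissible for the infimum in \eqref{eq:EDI}, the EDI of Definition~\ref{def:EDI} holds.

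The passage to the limit thus identifies $\rho$ as an EDI solution with initial datum $\rho^0$; Proposition~\ref{prop:props_EDI} in the appendix guarantees both $\rho\in C([0,T];L^1_w(\Omega))$ with $\rho(T)=\rho^T$, and uniqueness of such solutions, which promotes the subsequential convergence to convergence of the whole sequence. The main subtlety I expect is the coordination of the three weak objects ($\rho^T$, $F$, and the limit of the energy at $t=T$): we must make sure that the $\rho^T$ appearing as the weak limit of the terminal densities is the same one that plays the role of the terminal boundary condition in the continuity equation passage and in the lower-semicontinuous energy. This is handled by noting that the continuity equation limit forces $\rho(T)=\rho^T$ in the distributional trace sense, because the discrete summation-by-parts places $\rho^N_{\Tt,\tau}$ exactly in the position of the trace at $T$ in \eqref{eq:FP.cons.weak}.
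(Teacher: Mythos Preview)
Your proposal is correct and follows essentially the same approach as the paper: sum the one-step discrete EDI, pass to the $\liminf$ via Propositions~\ref{prop:fisher} and~\ref{prop:bb} and weak-$L^1$ lower semicontinuity of $\E$, and separately pass to the limit in the discrete continuity equation using consistency of the test-function averages together with the weak $L^1$ convergence of $F_{\Sigma^k,\tau^k}$. The only cosmetic difference is that the paper makes explicit the $\mathcal O(\operatorname{size}(\Tt^k))$ discrepancy between the discrete energy $\E_\Tt(\brho)$ (which uses $V_K=V(x_K)$) and the continuous energy $\E$ evaluated at the piecewise-constant reconstruction, and uses $C^2$ test functions for the continuity-equation limit; both points are implicit in your ``classical consistency'' and lower-semicontinuity arguments.
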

\begin{proof}
First, notice that no subsequence is involved in the above statement, as a by-product of the uniqueness of EDI solutions, cf. Proposition \ref{prop:props_EDI} in the Appendix. Showing compactness for $\left(\rho_{\Tt^k, \tau^k}\right)_{k}$ and 
$\left(\theta_{\Tt^k, \tau^k}\right)_{k}$, and the fact that any cluster point $\rho$ is an EDI solution then automatically gives the convergence of the whole sequence. So in what follows, we will not indicate when convergences hold up to a subsequence.

First of all, the convergence of the reconstructions is a consequence of Proposition~\ref{prop:convL1}.
Summing Proposition \ref{prop:ede} in time we get the discrete EDI estimate
\begin{equation}\label{eq:EDI.1}
\E_\Tt (\rho_k^N)  +\sum\limits_n\tau \left\{ \mc{D}_\psi(\btheta^{n+1/2},\bs{F}^{n+1/2}) + \mathcal{R}_\psi(\btheta^{n+1/2})\right\}
\leq \E _\Tt(\bs\rho_k^0).
\end{equation}
Jensen's inequality and the definition~\eqref{eq:Vpirho0} of $\brho_k^0$ gives 
\[
\mc{H}(\rho^0_{\Tt^k}) = \sum_{K\in\Tt^k} m_K H(\rho_K^0) \leq \mc{H}(\rho^0),
\]
whereas 
\[
\left| \sum_{K\in\Tt^k} m_K \rho_K^0 V_K - \int_\Omega \rho^0 V \right| \leq \sum_{K\in\Tt^k} \int_K \rho^0(x) |V(x) - V(x_K)| \operatorname{d}x
\leq C \operatorname{size}(\Tt^k)
\]
thanks to the regularity of $V$ and~\eqref{eq:volumebound}.
Therefore, 
\[
\E _\Tt(\bs\rho_k^0) \leq \E(\rho^0) + \mc{O}(\operatorname{size}(\Tt^k)).
\]
We deduce from similar arguments that 
\[
\left| \E_\Tt (\rho_k^N) -  \E(\rho_{\Tt^k,\tau^k}^T) \right|
=
\left| \sum_{K\in\Tt^k} m_K \rho_K^N V_K - \int_\Omega \rho^T_{\Tt^k,\tau^k} V\right|
\leq
C \operatorname{size}(\Tt^k).
\]
Taking the above estimates into acount in~\eqref{eq:EDI.1} leads to
\begin{equation}\label{eq:EDI.2}
 \E(\rho_{\Tt^k,\tau^k}^T)  +\sum\limits_n\tau \left\{ \mc{D}_\psi(\btheta^{n+1/2},\bs{F}^{n+1/2}) + \mathcal{R}_\psi(\btheta^{n+1/2})\right\}
\leq \E(\rho^0) + C \operatorname{size}(\Tt^k). 
\end{equation}
The first term on the left immediately passes to the liminf by standard weak-$L^1$ lower semi-continuity of the convex functional $\E$ together with the weak convergence~\eqref{eq:rhoTlimit}.
Moreover, let us recall from \eqref{eq:Flimit} that $ F_{\Sigma^k,\tau^k}\rightharpoonup F$ weakly in $L^1(Q_T)$.
Propositions \ref{prop:fisher} and \ref{prop:bb} thus allow to take the liminf in the dissipation terms and conclude that
 \[
 \E (\rho^T) + 2 \int_{Q_T} \pi\left| \nabla \sqrt{\frac{\rho}{\pi}}\right|^2 +  \int_{Q_T}  B(\rho,F) \leq \E (\rho^0)\,.
 \]
It only remains to show that the pair $(\rho,F)$ solves the continuity equation with initial and terminal data $\rho^0$ and $\rho^T$ taken in the $C([0,T];L^1_w(\Omega))$ sense.
For this, take any test-function $\varphi \in C^2(\overline{Q}_T)$ and denote
\[
\varphi_{K}^n = \frac{1}{m_K} \int_K\varphi(t^n,\cdot)\,.
\]
For fixed $\Tt,\tau$, observe on the one hand that
\[
\begin{aligned}
\langle \rho_{\Tt,\tau}, \partial_t \varphi\rangle  &= \sum_n  \sum_{K}  m_K \rho^{n+1}_K (\varphi^{n+1}_K - \varphi^n) \\
& = \sum_n  \sum_{K}  m_K (\rho^n_K -\rho^{n+1}_K) \varphi^{n}_K + \sum_K m_K (\rho^{N}_K\varphi^{N}_K - \rho^0_K\varphi^0_K)\\
& =  \sum_n  \sum_{K}  m_K (\rho^n_K -\rho^{n+1}_K) \varphi^{n}_K + \int_\Omega \varphi(T,\cdot) \rho^T_{\Tt,\tau} - \int_\Omega \varphi(0,\cdot) \rho^0_{\Tt},
\end{aligned}
\]
and on the other hand that 
\[
\begin{aligned}
\langle F_{\Sigma,\tau},\nabla \varphi \rangle & = \frac{1}{2} \sum_{n,K} \sum_{\sigma \in\Sigma_K} m_\sigma d_{\sigma} F^{n+1/2}_{K\sigma} \frac{1}{\Delta_\sigma} \int_{t^n}^{t^{n+1}}\int_{\Delta_{\sigma}} \nabla \varphi \cdot n_{K\sigma}\\
&
=
\frac{1}{2} \sum_{n,K} \sum_{\sigma \in\Sigma_K} \tau m_\sigma d_{\sigma} F^{n+1/2}_{K\sigma} \frac{\varphi^{n}_L - \varphi^{n}_K}{d_\sigma} +
\mc O\left(C_\varphi(\size(\Tt)+\tau) \|F_{\Sigma,\tau}\|_{L^1(Q_T)}\right)
\\
&
=
- \sum_{n,K} \sum_{\sigma \in\Sigma_K} \tau m_\sigma  F^{n+1/2}_{K\sigma} \varphi^{n}_K
+ \mc O\left(C_\varphi(\size(\Tt)+\tau) \|F_{\Sigma,\tau}\|_{L^1(Q_T)}\right),
\end{aligned}
\]
where $C_\varphi$ is a constant depending on $\|D^2_{t,x}\varphi\|_\infty$ only.
Hence by \eqref{eq:discretePDE}
\begin{multline*}
\left| \langle \rho_{\Tt,\tau}, \partial_t \varphi\rangle + \langle F_{\Sigma,\tau},\nabla \varphi \rangle  - \int_\Omega \varphi(T,\cdot) \rho^T_{\Tt,\tau} + \int_\Omega \varphi(0,\cdot) \rho^0_{\Tt} \right|
\\
=
\mc O\left(C_\varphi(\size(\Tt)+\tau) \|F_{\Sigma,\tau}\|_{L^1(Q_T)}\right).
\end{multline*}
Since $F_{\Sigma^k,\tau^k}\rightharpoonup F$ weakly in $L^1(Q_T;\R^d)$, since $\rho_{\Tt^k,\tau^k}\to \rho$ in $L^1(Q_T)$,  since ${\rho^0_{\Tt^k}\rightarrow\rho^0}$ in $L^1(\Omega)$ and since $\rho^T_{\Tt^k,\tau^k}\rightharpoonup\rho^T$ weakly in $L^1(\Omega)$ we can take the limit to retrieve the weak formulation \eqref{eq:FP.cons.weak} of the continuity equation for $C^2(\bar Q_T)$ test functions, and therefore for all $\varphi\in C^1(\bar Q_T)$ by density.
Finally, it is well-known \cite{AGS08} that any pair $(\rho,F)$ solving the continuity equation with finite kinetic energy $\int_{Q_T}B(\rho,F)<+\infty$ is $L^1_w(\Omega)$-continuous in time with initial/terminal data $\rho^0,\rho^T$, and the proof is complete.
\end{proof}

\begin{remark}\label{rem:weakenough}
Note that, since Proposition \ref{prop:fisher} and \ref{prop:bb} were established using weak convergence only, the strong convergence of the reconstructions in Proposition~\ref{prop:convL1} is itself not strictly needed to prove that the limit is an EDI solution.
\end{remark}
%
% % % % % % % % % % % % % % % % % % % % % % % % % % % % % % % % % % % % % % % % % % % % % % % % % % %
\section{Numerical implementation and results}\label{sec:num}
In this section we describe the implementation of the scheme defined by \eqref{eq:discretePDE}--\eqref{eq:rhoG}.
We also present some numerical tests confirming the second order accuracy, both in time and space.
%
% % % % % % % % % % % % % % % % % % % % % % % % % % % % % % % % % % % % % % % % % % % % % % % % % % % %
\subsection{Nested Newton method}\label{ssec:Newton}
Given $\brho^n$, computing $\brho^{n+1}$ requires solving the nonlinear system \eqref{eq:discretePDE}--\eqref{eq:rhoG} with in particular $\rho^{n+1}_K = \Xi(\rho^n_K,\theta^{n+1/2}_K)$.
For practical numerical purposes, solving for $\btheta^{n+1/2}$ as the primary variable requires solving a nonlinear scalar system in each cell in order to evaluate the function $\Xi(\rho^n_K,\cdot)$ itself and its derivatives.
Since $\Xi$ is $C^1$, convex, and $|\Xi'(a,\cdot)|\leq {e}$ this can be achieved efficiently with a Newton-Rhapson method.
In order to limit the number of linear systems to be solved in practice, however, we follow here an alternative reparametrization strategy inspired from~\cite{BC17}.
Recall from Section~\ref{ssec:scheme} that our interpolation $\Xi(a,\cdot)$ is defined in terms of the convex function $g=f^{-1}$ in \eqref{eq:GH}, whose graph $\{y=g(x)\}\subset \R\times\R_+$ we choose to reparametrize as
\[
\R\ni s  \longmapsto (x(s), y(s))
\coloneqq
\begin{cases}
(s +\xi,g(s+\xi) )  &\text{if } s\leq 0,\\
(f({\lambda}s+g(\xi)),{\lambda}s+g(\xi)) & \text{else}.
\end{cases}
\]
Here $\xi>e^{-1}$ is an arbitrary cutoff threshold:
for $s\leq 0$ one runs the graph $y=g(x)$ at unit speed, while for $s>\xi$ one rather chooses to run the inverse graph $x=f(y)$ at speed $\lambda$.
We impose $\lambda = g'(\xi)$, so that $x(\cdot)$ and $y(\cdot)$ are $C^1$ across $s=0$.
Note that, by construction, $\rho^{n+1}_K = \Xi(\rho^n_K, \theta^{n+1/2}_K)$ if and only if there exists (a unique) $s_K \in \mathbb{R}$ such that
\begin{equation}\label{eq:rhoX}
\rho^{n+1}_K = X(\rho^n_K;s_K) \coloneqq 
\left\{
\begin{array}{ll}
s_K & \text{if } \rho^n_K = 0\\
\rho^n_K x(s_K) &\text{otherwise}
\end{array}
\right.
\end{equation}
and
\[
\theta^{n+1/2}_K = Y(\rho^n_K;s_K) \coloneqq
\left\{
\begin{array}{ll}
e^{-1} s_K & \text{if } \rho^n_K = 0\\
\rho^n_K y(s_K) &\text{otherwise}
\end{array}
\right..
\]
At each time step, we first look for $\bs{s}\in\mathbb{R}^{\Tt}$ solving
\begin{equation}\label{eq:discretePDEs}
m_K \frac{ X(\rho^n_K; s_K) - \rho^n_K}{\tau} + \sum_{\sigma \in\Sigma_K} \frac{m_\sigma}{d_\sigma} \pi_\sigma  \left(\frac{ Y(\rho^n_K;s_K)}{\pi_K} - \frac{Y(\rho^n_L;s_L)}{\pi_L}\right) =0\,, \quad \forall\,K \in\Tt\,,
\end{equation}
by the Newton-Raphson method, and then update $\brho^{n+1}$ according to \eqref{eq:rhoX}.
Note that this reparametrization does not change the exact solution, which by Proposition \ref{prop:existence} should satisfy $\rho^{n+1}_K\geq 0$ and $\theta^{n+1/2}_K>0$ for all $K \in \Tt$.

\begin{remark} 
The evaluation of $X(\rho^n_K;\cdot)$, $Y(\rho^n_K;\cdot)$ and their derivatives requires the solution of an inner nonlinear system only when $\rho^n_K>0$ and $s_K\leq 0$, which at convergence corresponds to $\rho^{n+1}_K \leq \xi \rho^n_K$.
In practice we verified numerically that, even choosing $\xi \approx e^{-1}$, the outer Newton method for \eqref{eq:discretePDEs} only require very few iterations.
In that case, the inner Newton method is merely required in order to guarantee robustness of the scheme when dealing with solutions with steep gradients or vanishing densities.
\end{remark}
%
% % % % % % % % % % % % % % % % % % % % % % % % % % % % % % % % % % % % % % % % % % % % % % % % %
\subsection{Test-cases and numerical results}\label{ssec:results}
We set $\Omega =[0,1]^2$ and consider the two refinement patterns illustrated in Figure \ref{fig:meshes}.
Note that only the subdivision refinement satisfies all the requirements on the mesh geometry from Section~\ref{ssec:mesh2}.
\begin{figure}
\begin{subfigure}{.33\textwidth}
  \centering
  \includegraphics[width=.85\linewidth, trim = 90 0 90 0, clip]{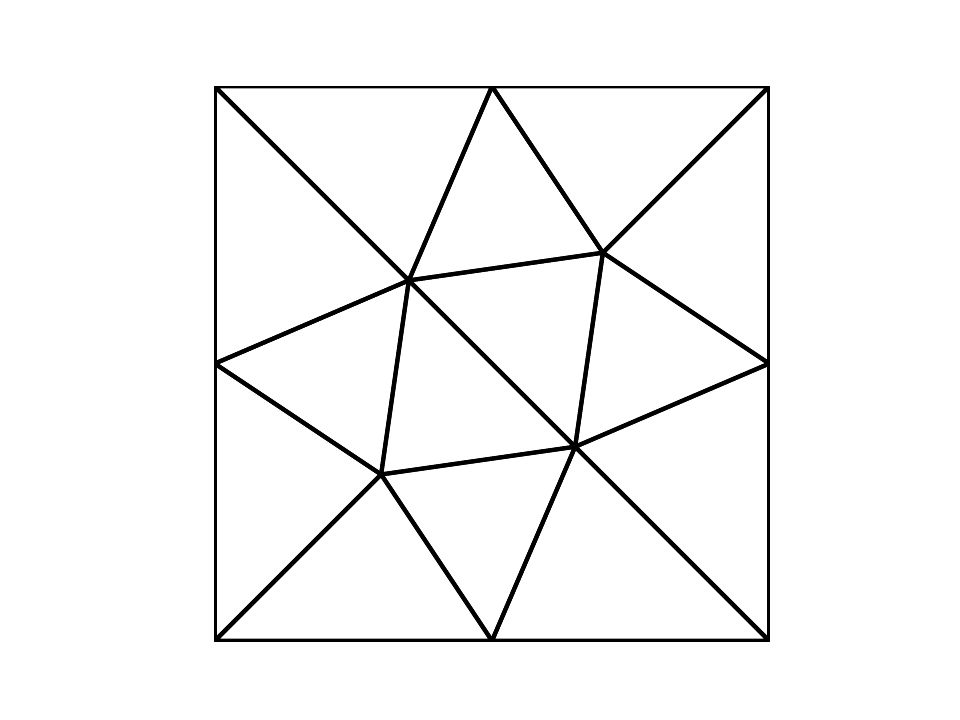}
  \caption{Coarsest mesh}
  \label{fig:sfig1}
\end{subfigure}%
\begin{subfigure}{.33\textwidth}
  \centering
  \includegraphics[width=.85\linewidth, trim = 90 0 90 0, clip]{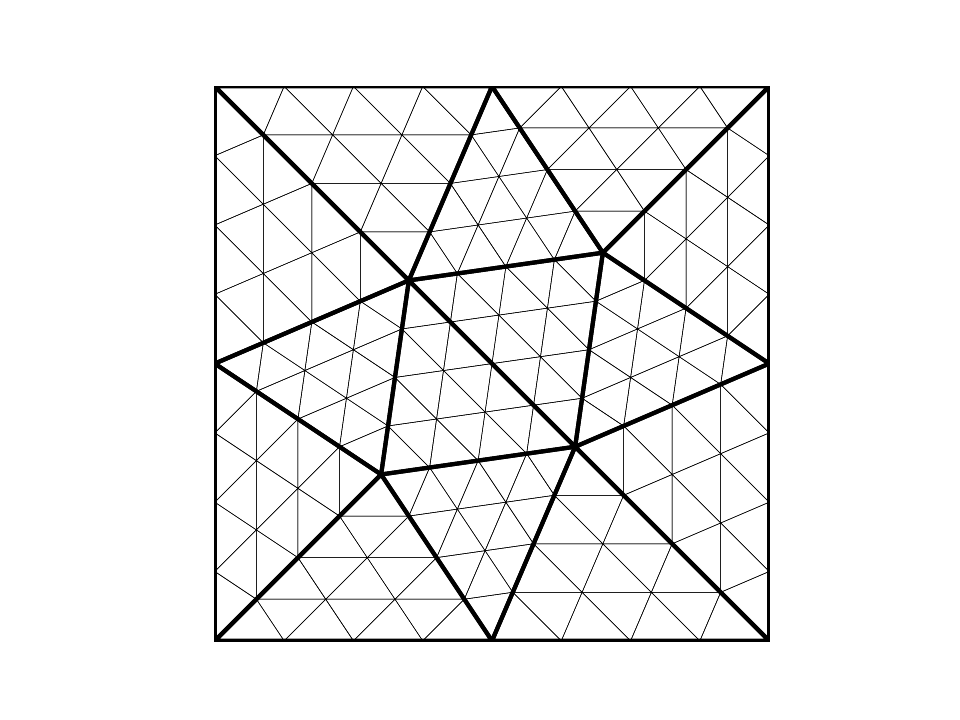}
  \caption{Refinement by subdivision}
  \label{fig:sfig2}
\end{subfigure}%
\begin{subfigure}{.33\textwidth}
  \centering
  \includegraphics[width=.85\linewidth, trim = 90 0 90 0, clip]{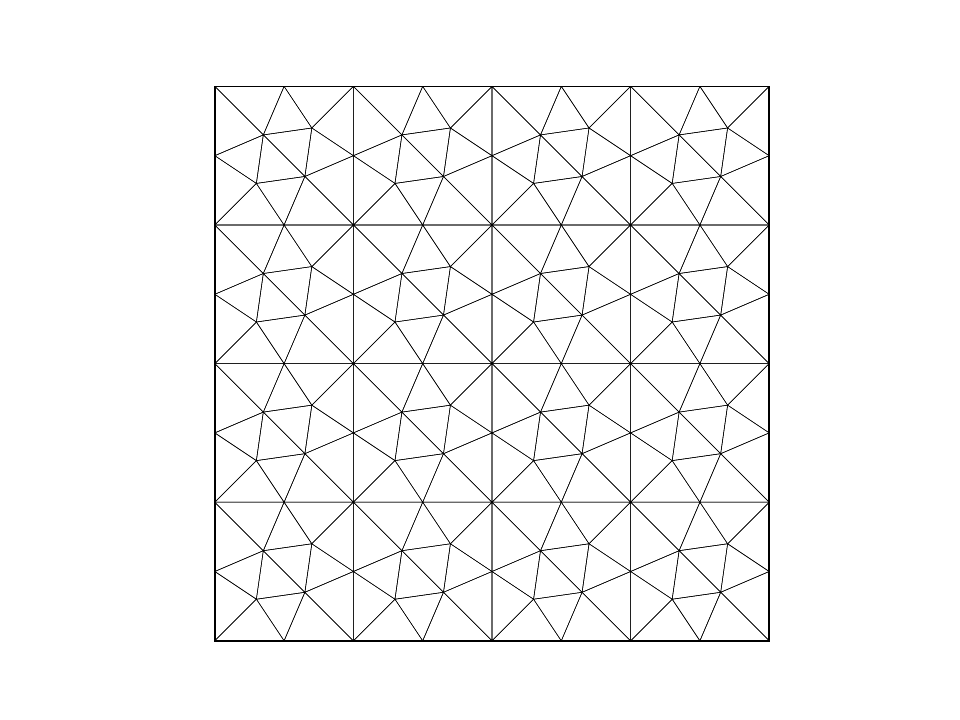}
  \caption{Refinement by repetition}
  \label{fig:sfig3}
\end{subfigure}
\caption{Illustration of the refinement patterns used in the numerical tests; \ref{fig:sfig1} is the base mesh used for both refinement patterns, while \ref{fig:sfig2} and \ref{fig:sfig3} are the second meshes for the two different refinement patterns used in the tests.}
\label{fig:meshes}
\end{figure}
% % % % % % % % % % % % % % % % % % % % % % % % % % % % %
\subsubsection{Convergence test}
In order to investigate convergence and accuracy of our scheme we consider a test-case with gravitational potential $V(x) = -gx_1$ for $g\in\mathbb{R}_+$ and $x=(x_1,x_2)\in \Omega$, in which case the continuous model reduces to the following linear Fokker-Planck equation:
\[
\partial_t \rho - \Delta \rho - g \partial_{x_1} \rho = 0 \quad \text{ in } Q_T\,.
\]
An exact solution given by
\[
\rho(t,x) = \exp\left(- \alpha (t+\delta) + \frac{g}{2}x_1\right)
\left( \pi \cos(\pi x_1) + \frac{g}{2}\sin(\pi x_1)\right)
+ \pi \exp\Big( g \Big(x_1-\frac{1}{2} \Big)\Big)
\]
with $\alpha = \pi^2 + g^2/4$ and $\delta>0$.
Note that the initial condition satisfies
\[
\rho^0(1,x_2)= \pi \exp\Big(  \frac{g}{2} \Big) \left(1- \exp\left(- \alpha \delta \right) \right)
\]
for all $x_2\in[0,1]$, and in particular $\rho^0(1,x_2)=0$ if $\delta=0$, and $\rho(t,x_1,x_2)>0$ for all $t>0$ and $(x_1,x_2)\in\Omega$.
In the following we fix $g=1$.
Tables~\ref{table:subdiv}--\ref{tab:delta0rep} show the $L^1(Q_T)$, $L^2(Q_T)$, and $L^\infty(Q_T)$ errors computed for the reconstructed density $\rho_{\tau,\Tt}$ for the two values $\delta=0.0001,\delta=0$ and both refinement methods.
We observe that the aimed second order in time and space convergence is practically achieved for the $L^1(Q_T)$ norm whatever the initialization and the mesh refinement strategy.
In the presence of vacuum, when the initial profile $\rho^0$ partially vanishes along $x_1=1$ for $\delta=0$, second order accuracy is lost for the $L^\infty(Q_T)$ norm, and to a lesser extent for the $L^2(Q_T)$ norm.

\begin{table}
\begin{tabular}{lllllllll}
\hline
 $\tau$    & $\size(\Tt)$      & $L^1$    & rate     & $L^2$    & rate     & $L^\infty$   & rate     & $\rho_{min}$   \\
\hline
 2.50e-02 & 3.06e-01 & 4.48e-03 &  & 1.04e-02 &  & 5.96e-02     &  & 6.68e-02      \\
 1.25e-02 & 1.53e-01 & 9.84e-04 & 2.19 & 2.44e-03 & 2.09 & 3.65e-02     & 7.08e-01 & 5.55e-02      \\
 6.25e-03 & 7.65e-02 & 2.39e-04 & 2.04 & 6.01e-04 & 2.03 & 1.46e-02     & 1.32 & 5.28e-02      \\
 3.13e-03 & 3.82e-02 & 5.95e-05 & 2.01 & 1.48e-04 & 2.02 & 4.94e-03     & 1.56 & 5.22e-02      \\
 1.56e-03 & 1.91e-02 & 1.48e-05 & 2.01 & 3.65e-05 & 2.02 & 1.42e-03     & 1.79 & 5.21e-02      \\
 7.81e-04 & 9.56e-03 & 3.67e-06 & 2.01 & 9.05e-06 & 2.01 & 3.38e-04     & 2.07 & 5.21e-02      \\
\hline
\end{tabular}
\caption{$\delta= 0.001$, refinement by subdivision.}
\label{table:subdiv}
\end{table}

\begin{table}
\begin{tabular}{lllllllll}
\hline
 $\tau$    & $\size(\Tt)$      & $L^1$    & rate     & $L^2$    & rate     & $L^\infty$   & rate     & $\rho_{min}$   \\
\hline
 2.50e-02 & 3.06e-01 & 2.75e-03 &  & 7.64e-03 &  & 7.52e-02     &  & 6.68e-02      \\
 1.25e-02 & 1.53e-01 & 6.10e-04 & 2.17 & 1.87e-03 & 2.03 & 2.81e-02     & 1.42 & 5.55e-02      \\
 6.25e-03 & 7.65e-02 & 1.50e-04 & 2.03 & 4.85e-04 & 1.95 & 1.43e-02     & 9.76e-01 & 5.28e-02      \\
 3.13e-03 & 3.82e-02 & 3.75e-05 & 2.00 & 1.24e-04 & 1.97 & 5.29e-03     & 1.43 & 5.22e-02      \\
 1.56e-03 & 1.91e-02 & 9.36e-06 & 2.00 & 3.10e-05 & 2.00 & 1.64e-03     & 1.69 & 5.21e-02      \\
 7.81e-04 & 9.56e-03 & 2.34e-06 & 2.00 & 7.73e-06 & 2.00 & 4.31e-04     & 1.93 & 5.21e-02      \\
\hline
\end{tabular}
\caption{$\delta= 0.001$, refinement by repetition.}
\label{table:repete}
\end{table}

\begin{table}
\begin{tabular}{lllllllll}
\hline
 $\tau$    & $\size(\Tt)$      & $L^1$    & rate     & $L^2$    & rate     & $L^\infty$   & rate     & $\rho_{min}$   \\
\hline
 2.50e-02 & 3.06e-01 & 4.60e-03 &  & 1.09e-02 &  & 8.49e-02     &  & 1.61e-02      \\
 1.25e-02 & 1.53e-01 & 1.05e-03 & 2.13 & 2.85e-03 & 1.93 & 6.43e-02     & 4.00e-01 & 4.06e-03      \\
 6.25e-03 & 7.65e-02 & 2.64e-04 & 1.99 & 8.26e-04 & 1.79 & 3.73e-02     & 7.85e-01 & 1.02e-03      \\
 3.13e-03 & 3.82e-02 & 6.68e-05 & 1.98 & 2.45e-04 & 1.75 & 2.00e-02     & 8.99e-01 & 2.55e-04      \\
 1.56e-03 & 1.91e-02 & 1.66e-05 & 2.01 & 7.31e-05 & 1.75 & 1.04e-02     & 9.48e-01 & 6.39e-05      \\
 7.81e-04 & 9.56e-03 & 4.13e-06 & 2.01 & 2.18e-05 & 1.74 & 5.29e-03     & 9.72e-01 & 1.60e-05      \\
\hline
\end{tabular}
\caption{$\delta= 0$, refinement by subdivision.} \label{tab:delta0sub} 
\end{table}

\begin{table}
\begin{tabular}{lllllllll}
\hline
 $\tau$    & $\size(\Tt)$      & $L^1$    & rate     & $L^2$    & rate     & $L^\infty$   & rate     & $\rho_{min}$   \\
\hline
 2.50e-02 & 3.06e-01 & 2.84e-03 &  & 8.16e-03 &  & 8.17e-02     &  & 1.61e-02      \\
 1.25e-02 & 1.53e-01 & 6.74e-04 & 2.07 & 2.33e-03 & 1.81 & 5.55e-02     & 5.57e-01 & 4.06e-03      \\
 6.25e-03 & 7.65e-02 & 1.74e-04 & 1.96 & 7.23e-04 & 1.69 & 3.43e-02     & 6.94e-01 & 1.02e-03      \\
 3.13e-03 & 3.82e-02 & 4.43e-05 & 1.97 & 2.25e-04 & 1.69 & 1.91e-02     & 8.43e-01 & 2.55e-04      \\
 1.56e-03 & 1.91e-02 & 1.12e-05 & 1.99 & 6.92e-05 & 1.70 & 1.01e-02     & 9.15e-01 & 6.39e-05      \\
 7.81e-04 & 9.56e-03 & 2.82e-06 & 1.99 & 2.11e-05 & 1.71 & 5.23e-03     & 9.55e-01 & 1.60e-05      \\
\hline
\end{tabular}
\caption{$\delta= 0$, refinement by repetition.} \label{tab:delta0rep} 
\end{table}

In figure \ref{fig:balance} we plot the error in the dissipation balance
\[
\Delta(t^n)
\coloneqq
\frac{1}{\tau}
\big[\E _{\Tt}(\brho^{n}) - \E _{\Tt}(\brho^{n+1})) - \mc{D}_\psi(\btheta^{n+1/2}) - \mc{D}_{\psi^*}(\btheta^{n+1/2})\Big]
\]
as a function of time.
Recall that $\Delta(t)\equiv 0$ is expected in the limit for continuous EDI solutions of the Fokker-Planck equation, so the smaller the numerical $\Delta$ the more accurately dissipation is captured by the scheme.

\begin{figure}
\includegraphics[scale=.8, trim = 0 10 0 0, clip]{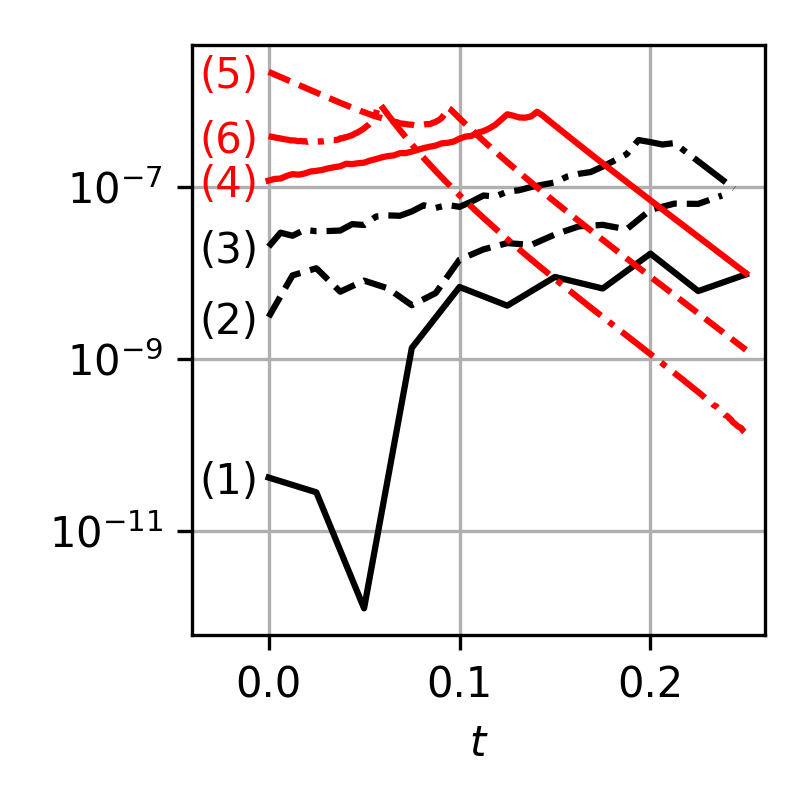}
\includegraphics[scale=.8, trim = 0 10 0 0, clip]{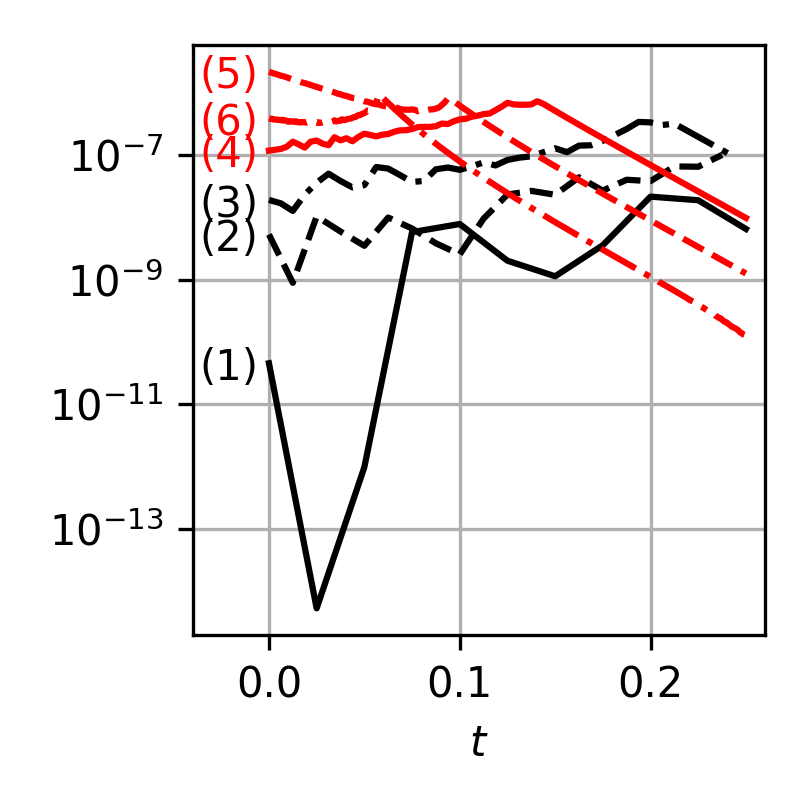}
\caption{Error in the energy balance $\Delta$, for $\delta=0$. The labels refer to different meshes and time steps corresponding to the row numbers of Table \ref{tab:delta0sub} (left, refinement by subdivision) and \ref{tab:delta0rep} (right, refinement by repetition)} \label{fig:balance}
\end{figure}

\begin{figure}
\includegraphics[scale=.8, trim = 0 10 0 0, clip]{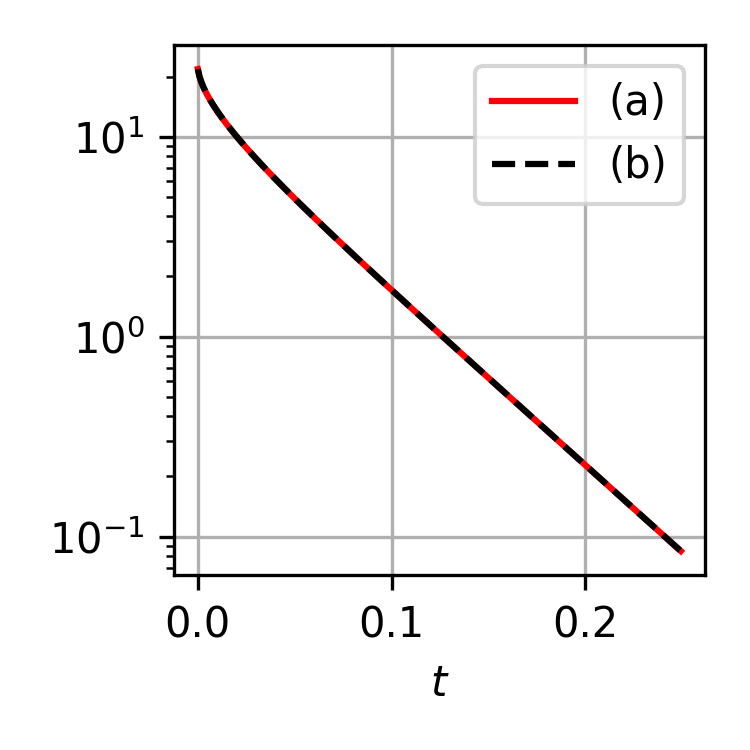}
\includegraphics[scale=.8, trim = 0 10 0 0, clip]{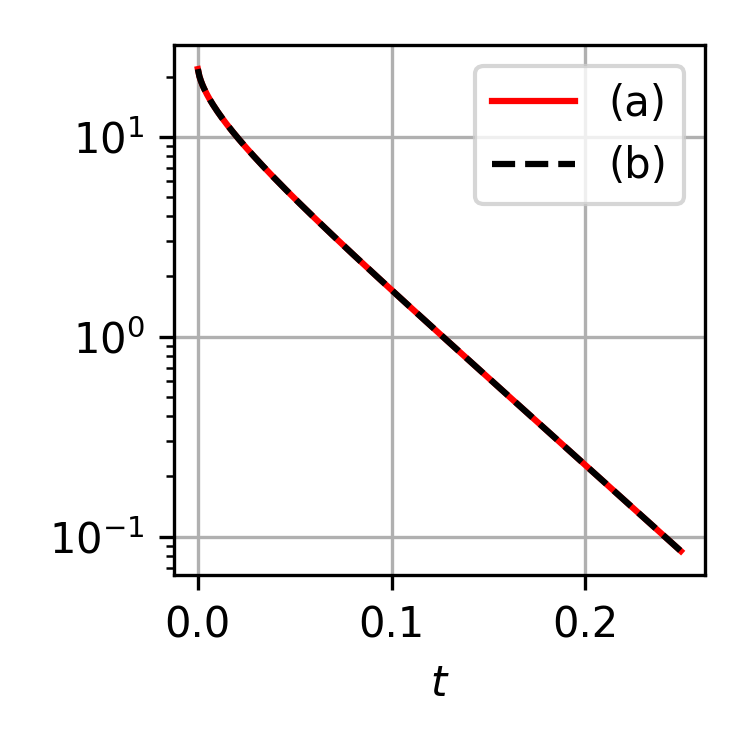}
\caption{Dissipation rates $\mc{D}_\psi(\btheta^{n+1/2})$ (a), and $\mc{D}_{\psi^*}(\btheta^{n+1/2})$ (b), for $\delta=0$ and using the finest mesh and time step, corresponding to the last row of Table \ref{tab:delta0sub} (left, refinement by subdivision) and \ref{tab:delta0rep} (right, refinement by repetition)}
\end{figure}

\begin{figure}
\begin{subfigure}{.33\textwidth}
  \centering
  \includegraphics[width=.85\linewidth, trim = 90 20 115 0, clip]{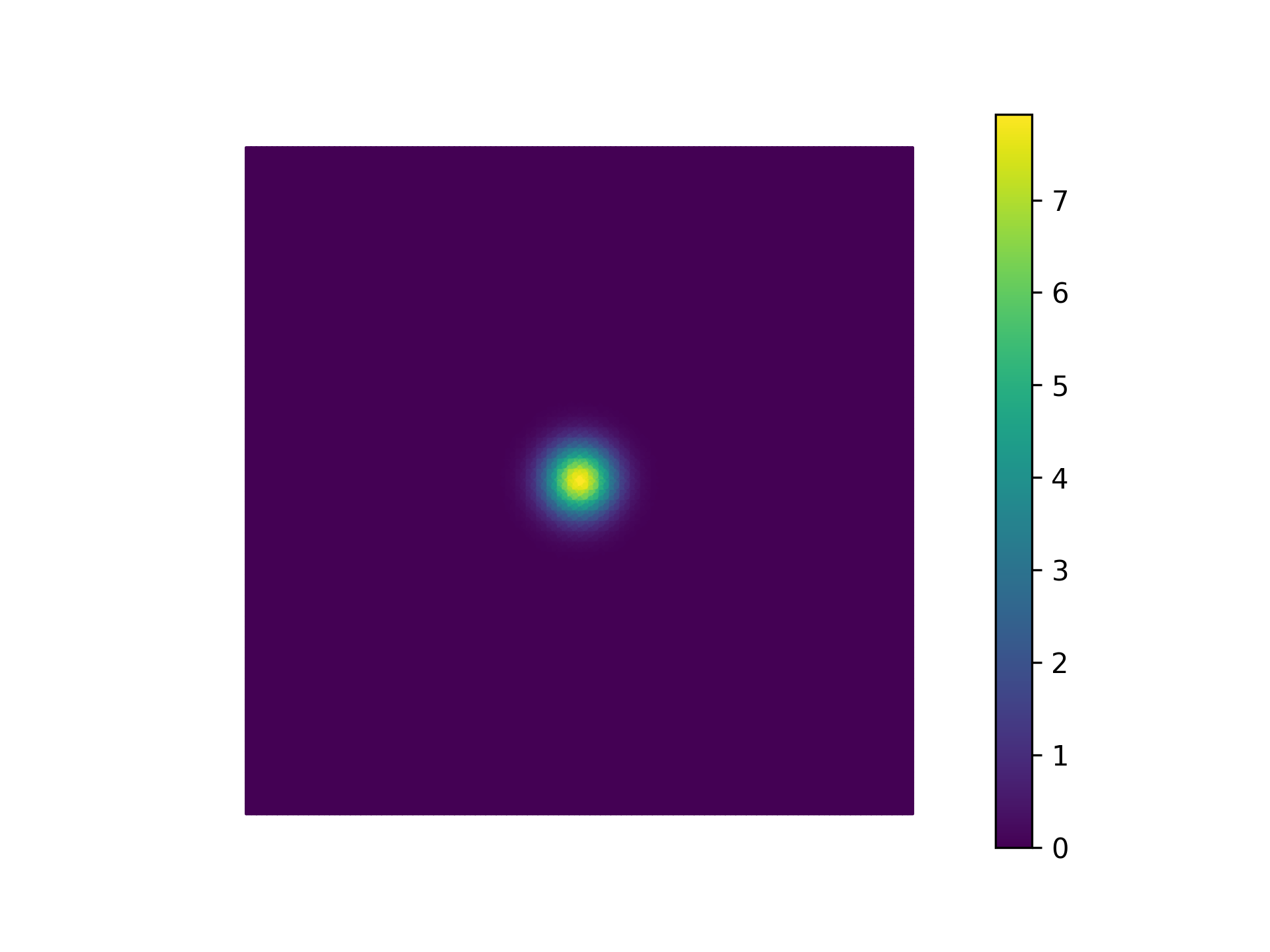}
  \caption{$t=0$}
\end{subfigure}%
\begin{subfigure}{.33\textwidth}
  \centering
  \includegraphics[width=.85\linewidth, trim = 90 20 115 0, clip]{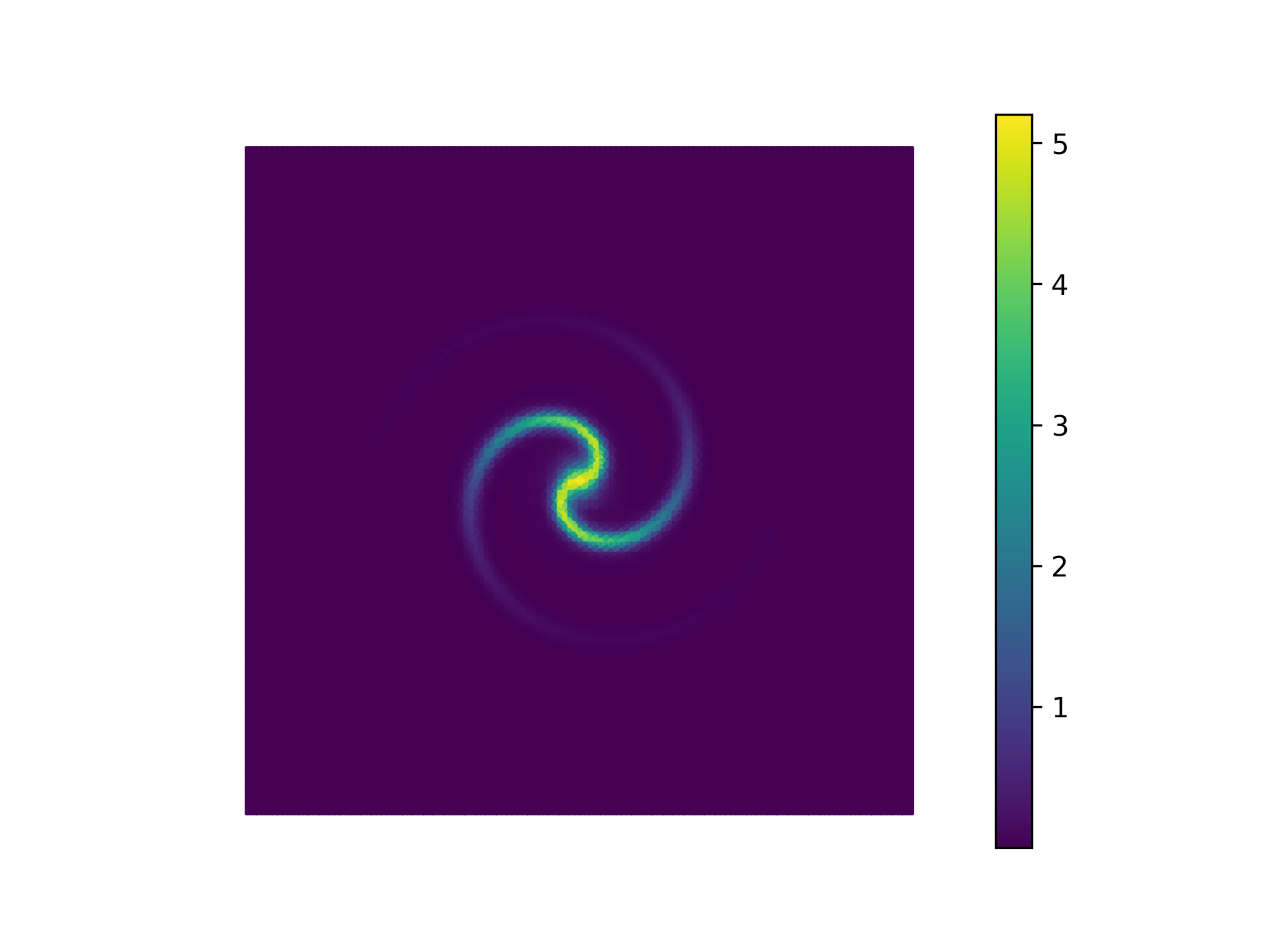}
  \caption{$t=1.2\cdot 10^{-2}$}
\end{subfigure}%
\begin{subfigure}{.33\textwidth}
  \centering
  \includegraphics[width=.85\linewidth, trim = 90 20 115 0, clip]{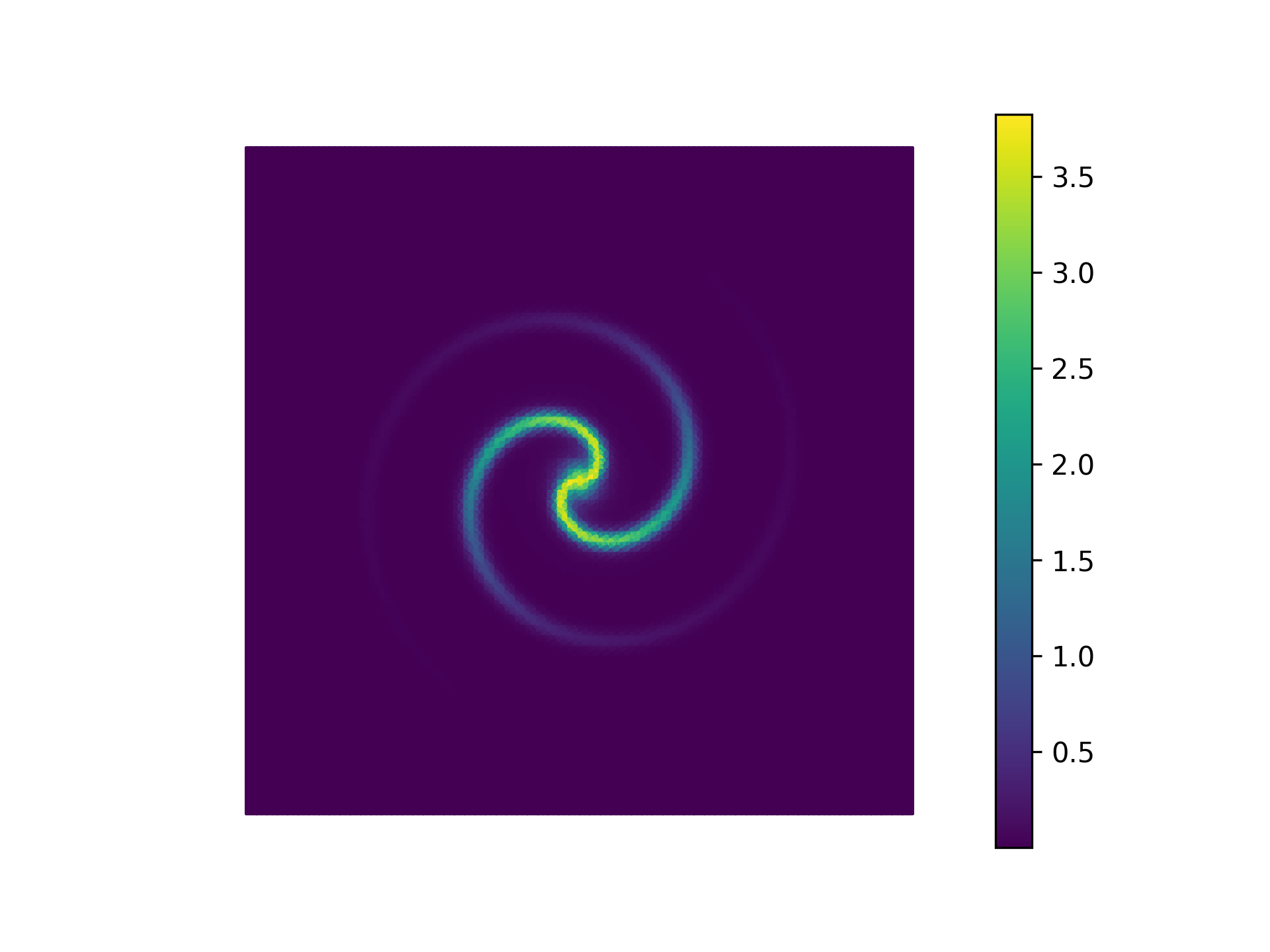}
  \caption{$t=2.5\cdot 10^{-2}$}
\end{subfigure}\\
\begin{subfigure}{.33\textwidth}
  \centering
  \includegraphics[width=.85\linewidth, trim = 90 20 115 0, clip]{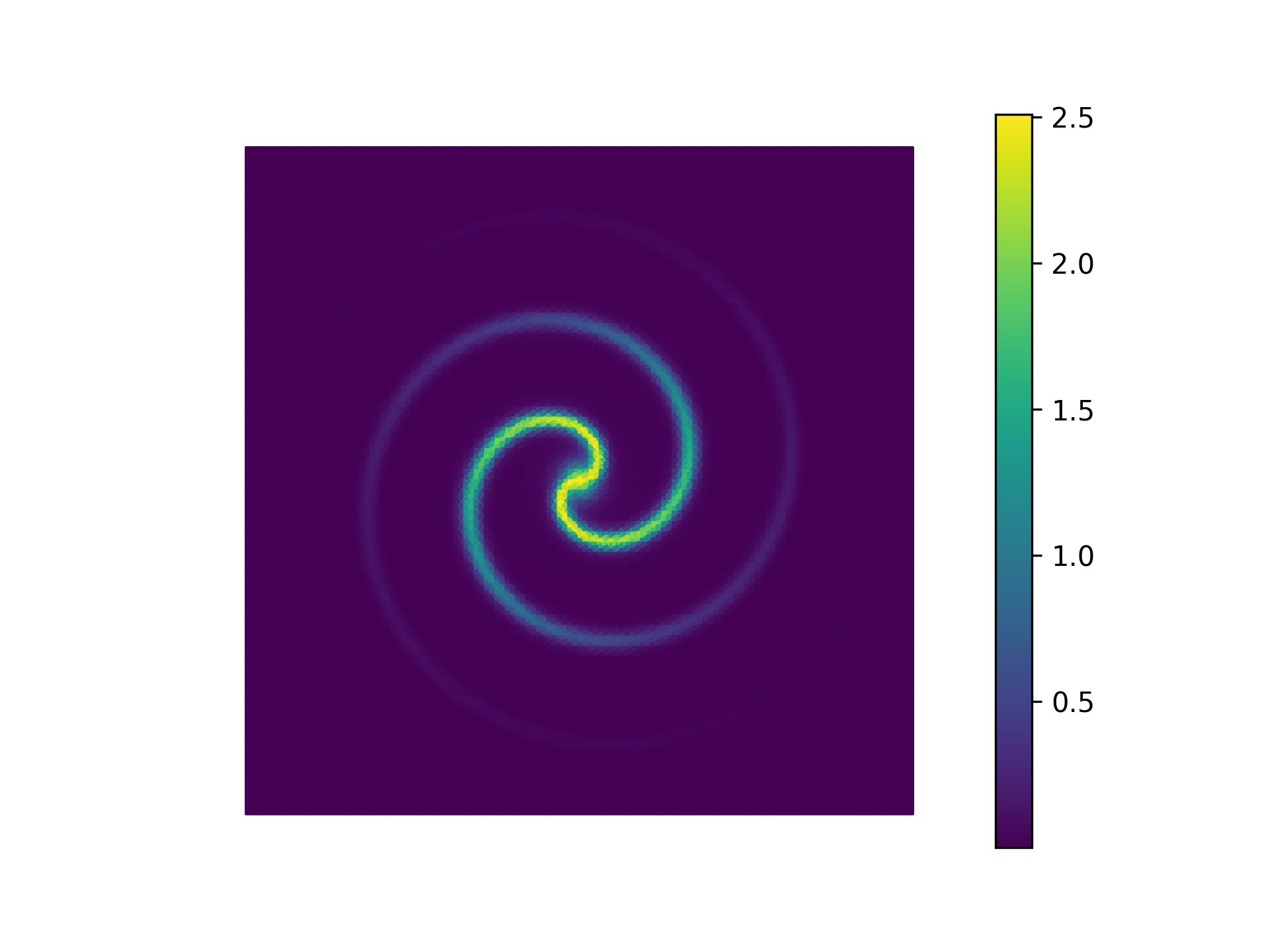}
  \caption{$t=5.0\cdot 10^{-2}$}
\end{subfigure}%
\begin{subfigure}{.33\textwidth}
  \centering
  \includegraphics[width=.85\linewidth, trim = 90 20 115 0, clip]{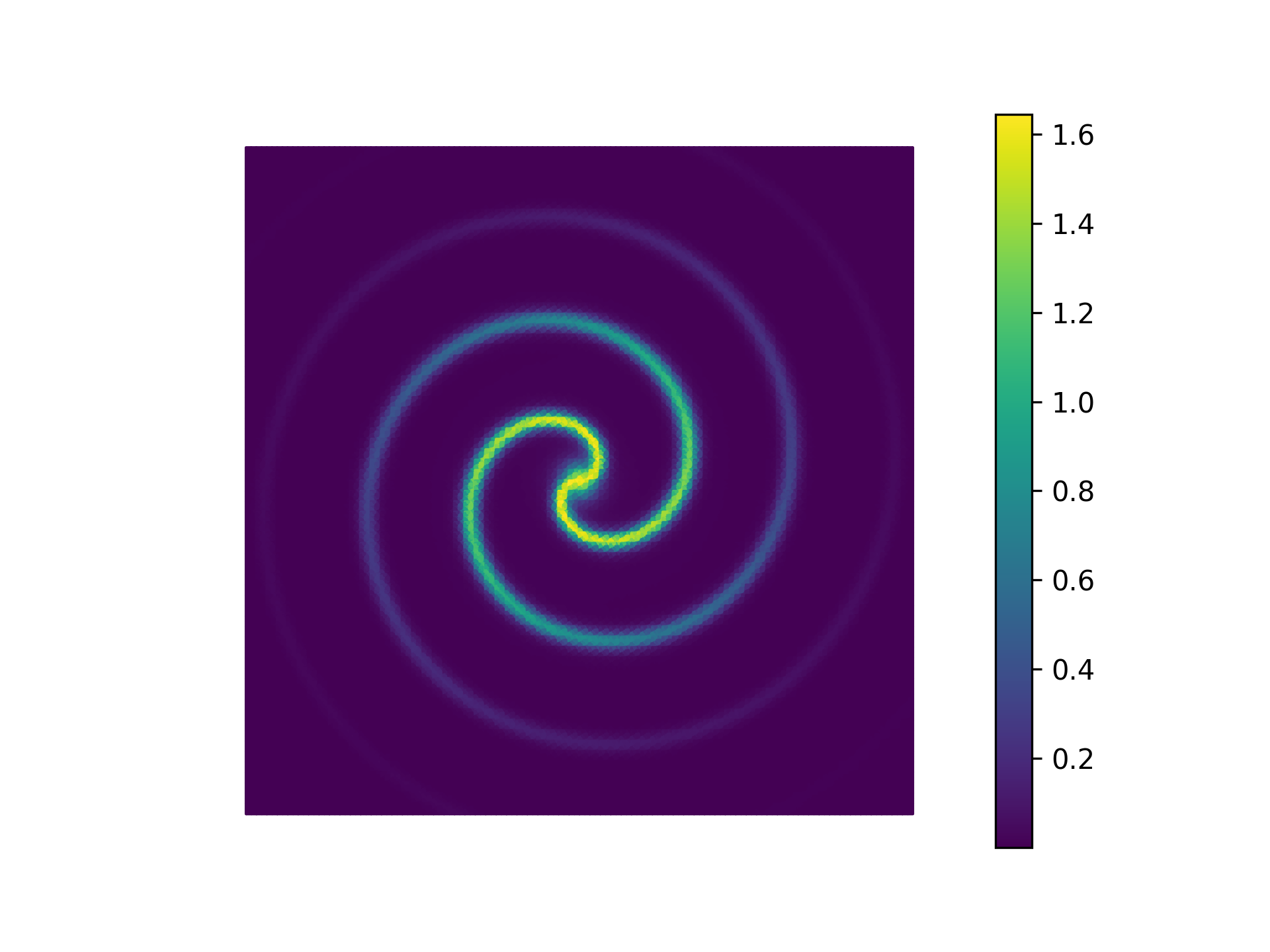}
  \caption{$t= 10^{-1}$}
\end{subfigure}%
\begin{subfigure}{.33\textwidth}
  \centering
  \includegraphics[width=.85\linewidth, trim = 90 20 115 0, clip]{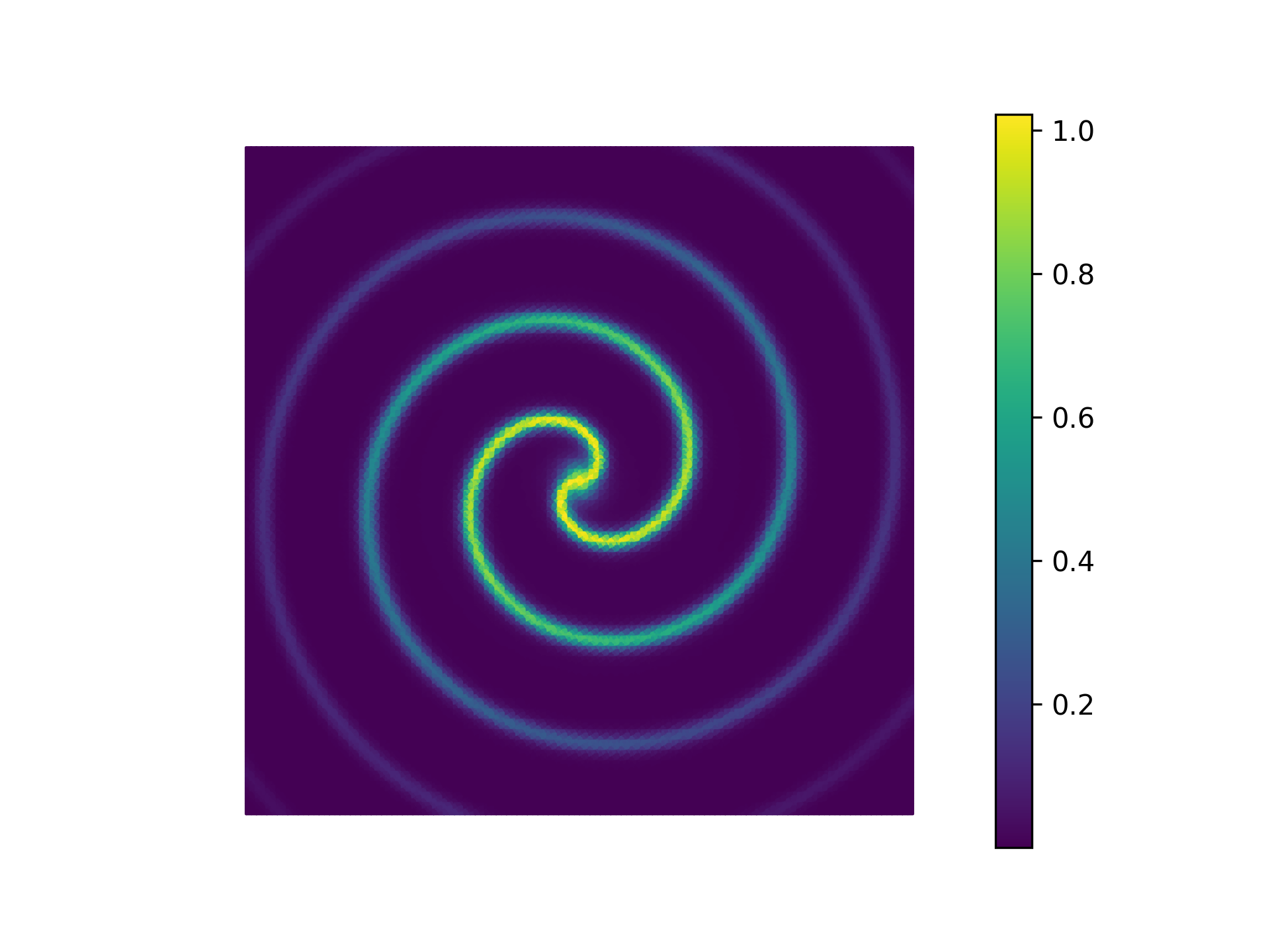}
  \caption{$t=2.0\cdot 10^{-1}$}
\end{subfigure}
\caption{Density evolution for the second test case (note that the color scale is renormalized in each picture).}\label{fig:spiral}
\end{figure}

\begin{figure}
\includegraphics[scale=.8, trim = 0 10 0 0, clip]{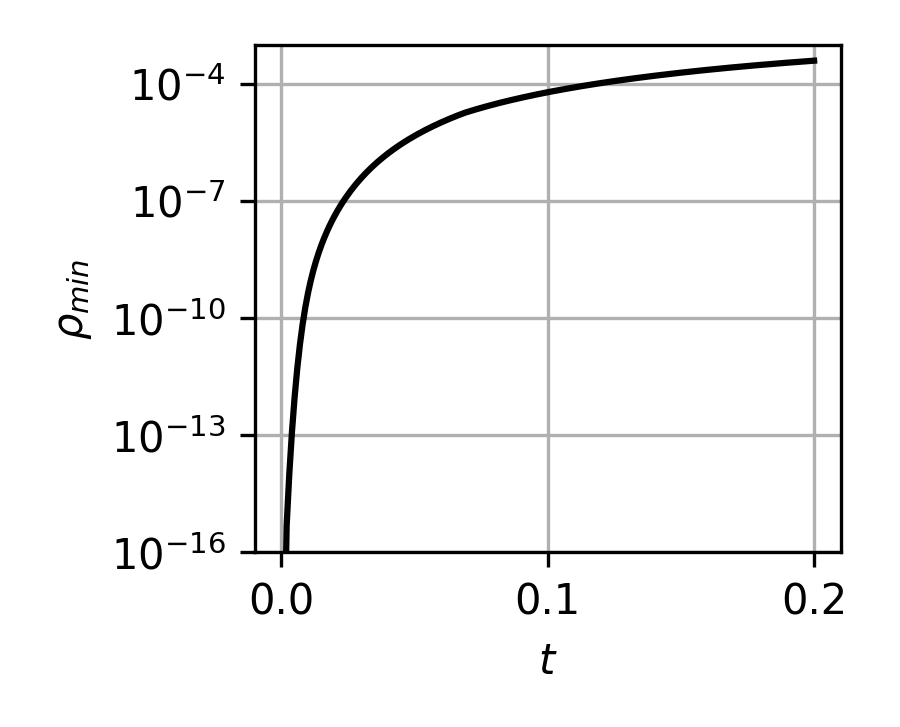}
\includegraphics[scale=.8, trim = 0 10 0 0, clip]{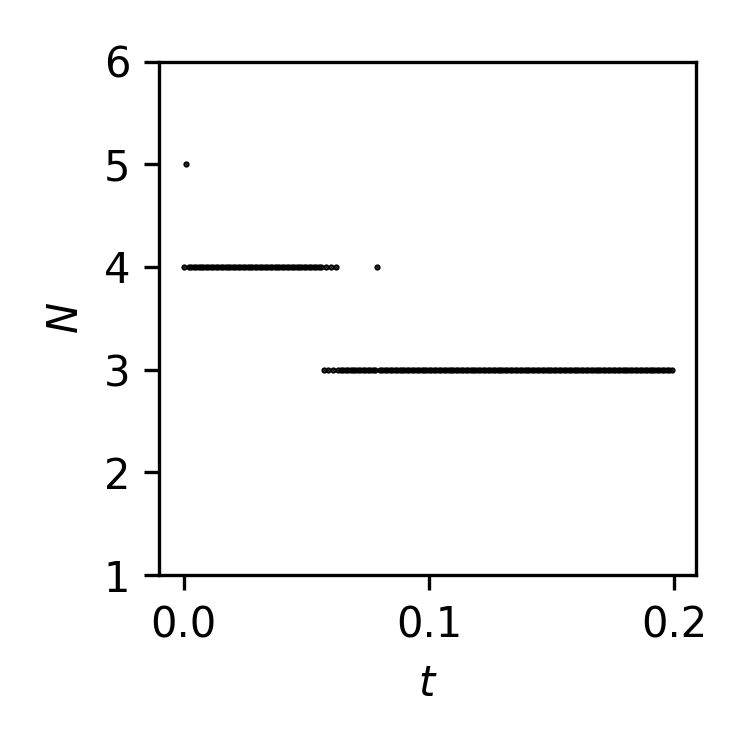}
\caption{Time evolution of density minimum $\min \rho_{\Tt,\tau}(t,\cdot)$ and number of outer Newton iterations to solve the nonlinear system at each time step.} \label{fig:spiral_newton}
\end{figure}
% % % % % % % % % % % % % % % % % % % % % % % % % % % % % % % % % % % % % % %
\subsubsection{A test case with a steep potential}
We define now in polar coordinates
\[
V(x) = 5(1-\exp(-r^2/\sigma^2)) (1-\cos^6(20r-\theta))
,\qquad
\rho^0(x) = \frac{1}{\sqrt{2\pi}\sigma} \exp(-r^2/\sigma^2),
\]
where the origin $r=0$ is set at the center of the domain $(1/2,1/2)$ and $\sigma =10^{-2}$.
The solution for these data concentrates on a small area of the domain centered around the curve $20r = \theta$.
The density profile at different times is displayed in figure~\ref{fig:spiral}.
In figure~\ref{fig:spiral_newton} we show the time evolution of the minimum of the density and the number of outer Newton iterations.
% % % % % % % % % % % % % % % % % % % % % % % % % % % %
 
 \section{Conclusion and prospects}\label{sec:conclusion}
 
The numerical strategy we propose in this paper shows great promises, as its structure allows to carry out the rigorous numerical analysis.
The convergence can even be established under reasonable assumptions on the mesh that might be relaxed even further.
The scheme preserves positivity and is compatible with thermodynamics in the sense that free energy decays in time at a rate which accurately approaches the exact one.
Moreover, our method is computationally efficient thanks to the local-in space character of the time extrapolation.
In particular,  the resolution of the nonlinear systems arising from our scheme does not seem to be computationally demanding in practice, at least in the test cases we have run.

We only validated our approach so far on the simple linear Fokker-Planck equation.
The extension to nonlinear (possibly degenerate) parabolic equations (or even systems) with gradient structure remains to be done.
We also plan to extend our strategy to the case of Poisson-Nernst-Planck systems, where the extrapolation step is no longer local but still computable at reasonable cost.
% % % % % % % % % % % % % % % % % % % % % % % % % % % % % % %

\appendix

% % % % % % % % % % % % % % % % % % % % % % % % % % % % % % % % % % % % % % % % % % % % % % % % % % %
\section{Properties of EDI solutions}
\label{sec:appendix_EDI}
We discuss here basic properties of EDI solutions needed for our purpose.
The starting point is the following chain rule:
\begin{proposition}[Chain rule]
Let $\Omega\subset \R^d$ be convex and $V\in C^2(\bar\Omega)$.
Take any $(\rho,F)$ satisfying the continuity equation \eqref{eq:FP.cons.weak} in time $[0,T]$, with moreover finite kinetic energy and Fisher information
 $$
 \int_0^T \mc R(\rho) + \int_{Q_T} B(\rho, F)<+\infty.
 $$
 Then $t\mapsto \E(\rho^t)$ is absolutely continuous with distributional derivative
 \begin{equation}
 \label{eq:upper_chain_rule_abstract}
 \frac{d}{dt}\E(\rho^t)=\int_\Omega F^t \cdot \nabla\log\left(\frac{\rho^t}{\pi}\right)
 \qquad
 \in L^1(0,T).
 \end{equation}
\end{proposition}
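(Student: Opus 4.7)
The strategy is to justify the formal chain-rule computation
\[
\frac{d}{dt} \E(\rho^t) = \int_\Omega \log(\rho^t/\pi)\, \partial_t \rho^t = -\int_\Omega \log(\rho^t/\pi)\,\Div F^t = \int_\Omega F^t \cdot \nabla \log(\rho^t/\pi)
\]
by a regularization argument that bypasses both the vacuum set $\{\rho = 0\}$ and the insufficient time-regularity of $\rho$.

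First I would record the two integrability consequences of the hypotheses. Finite kinetic energy forces $F \ll \rho$ with $|F|^2/\rho \in L^1(Q_T)$, so in particular $F = 0$ a.e. on $\{\rho = 0\}$ and $F/\sqrt{\rho} \in L^2(Q_T)$; combined with $\rho \in L^\infty_t L^1_x$ (mass conservation), Cauchy--Schwarz also yields $F \in L^1(Q_T)$. Finite Fisher information gives $\sqrt{\rho/\pi} \in L^2(0,T; H^1(\Omega))$, and together with $V \in C^2(\bar\Omega)$ this yields $\nabla \sqrt{\rho} \in L^2(Q_T)$. From the pointwise decomposition
\[
F \cdot \nabla \log(\rho/\pi) = (F/\sqrt{\rho}) \cdot 2\nabla \sqrt{\rho} + F \cdot \nabla V \quad \text{on } \{\rho > 0\},
\]
Cauchy--Schwarz then shows that the RHS of \eqref{eq:upper_chain_rule_abstract} lies in $L^1(Q_T)$ under the natural convention that it vanishes on $\{\rho = 0\}$.

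Next I would desingularize the vacuum by setting $\rho^\eps \coloneqq \rho + \eps$ with $\eps > 0$; this pair still satisfies $\partial_t \rho^\eps + \Div F = 0$. Extending $(\rho^\eps, F)$ by constants (resp.\ by $0$) in time and convolving with a temporal mollifier $\eta_\delta$ produces $(\rho^\eps_\delta, F_\delta)$ that are smooth in time and still solve the continuity equation classically. Since $\rho^\eps_\delta \geq \eps > 0$, $\log(\rho^\eps_\delta/\pi)$ has a bona fide spatial gradient, and the classical chain rule combined with integration by parts (justified by the no-flux condition and the $C^1(\overline Q_T)$ test space in \eqref{eq:FP.cons.weak}) yields
\[
\int_\Omega \bigl[H(\rho^\eps_\delta(t)) + \rho^\eps_\delta(t) V\bigr] - \int_\Omega \bigl[H(\rho^\eps_\delta(s)) + \rho^\eps_\delta(s) V\bigr]
=
\int_s^t \!\!\int_\Omega F_\delta \cdot \left[\frac{\nabla \rho^\eps_\delta}{\rho^\eps_\delta} + \nabla V\right]
\]
for every $0 \leq s < t \leq T$. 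The limit $\delta \to 0$ at fixed $\eps > 0$ is routine: standard mollifier estimates give strong $L^1$ convergence, while the uniform lower bound $\rho^\eps_\delta \geq \eps$ keeps $\nabla \rho^\eps_\delta/\rho^\eps_\delta$ in $L^1(Q_T)$ uniformly in $\delta$, so the identity passes to the limit for Lebesgue points $s, t$.

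The main obstacle is the subsequent limit $\eps \to 0$ in $\int F \cdot \nabla \rho/(\rho + \eps)$, where one must exploit the vanishing of $F$ on $\{\rho = 0\}$ to secure a uniform $L^1$ domination. The key factorization
\[
\frac{F \cdot \nabla \rho}{\rho + \eps} = \frac{\rho}{\rho + \eps} \cdot \bigl(F/\sqrt{\rho}\bigr) \cdot 2\nabla \sqrt{\rho} \qquad \text{on } \{\rho > 0\}
\]
shows that the integrand is pointwise dominated by the $\eps$-independent function $(|F|/\sqrt{\rho}) \cdot 2|\nabla \sqrt{\rho}| \in L^1(Q_T)$ and converges pointwise a.e.\ to $F \cdot \nabla \log \rho$, trivially extended by $0$ on $\{\rho = 0\}$. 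Dominated convergence, together with the pointwise convergence $H(\rho^\eps_\delta) \to H(\rho)$ controlled by the entropy bound, produces the integral form of the chain rule
\[
\E(\rho^t) - \E(\rho^s) = \int_s^t \int_\Omega F \cdot \nabla \log(\rho/\pi),
\]
from which the absolute continuity of $t \mapsto \E(\rho^t)$ and the identification of its distributional derivative as the $L^1(0,T)$ function in \eqref{eq:upper_chain_rule_abstract} follow at once.
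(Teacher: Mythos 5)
Your proposal is a genuinely different route from the paper's: the paper does not give a self-contained PDE argument at all, but deduces the chain rule from the Wasserstein subdifferential calculus of \cite{AGS08} (\S 10.1.2.E, Proposition 10.3.18, Theorem 10.4.9), where the convexity of $\Omega$ and $V\in C^2(\bar\Omega)$ guarantee $\lambda$-displacement convexity of $\rho\mapsto\E(\rho)$. Your preliminary integrability observations ($F=0$ a.e.\ on vacuum, $F/\sqrt\rho\in L^2$, $\nabla\sqrt\rho\in L^2$, hence the right-hand side of \eqref{eq:upper_chain_rule_abstract} is in $L^1$) are correct, but the core of your regularization argument has a genuine gap. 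The identity you claim for $(\rho^\eps_\delta,F_\delta)$ amounts to using $\varphi=\log(\rho^\eps_\delta/\pi)$ (times a temporal cutoff) as a test function in \eqref{eq:FP.cons.weak}. That formulation is only available for $\varphi\in C^1(\overline Q_T)$, and your mollification is in time only, so it produces no spatial regularity: $\nabla\log\rho^\eps_\delta=\nabla\rho_\delta/(\rho_\delta+\eps)$ lies at best in $L^1$ in space, while $F_\delta$ is itself only $L^1$ in space, so neither the duality pairing nor the extension of \eqref{eq:FP.cons.weak} to such test functions is available without a further spatial regularization, a commutator argument, and—crucially—a treatment of the no-flux boundary condition: $F$ has no normal trace, so the ``integration by parts justified by the no-flux condition'' cannot be performed directly; the boundary information is encoded only weakly through $C^1(\overline Q_T)$ test functions. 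This is precisely where the geometry of $\Omega$ must enter (e.g.\ through dilations of the convex domain to push the support inward before mollifying, or through the displacement-convexity machinery the paper invokes). Tellingly, your argument never uses convexity of $\Omega$; if it were complete as written it would yield the chain rule on arbitrary bounded Lipschitz domains, which the paper explicitly states is open.

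A secondary point: the limit $\delta\to0$ at fixed $\eps$ is not ``routine'' as stated. A uniform-in-$\delta$ $L^1$ bound on $\nabla\rho_\delta/(\rho_\delta+\eps)$ gives no weak compactness that can be paired with the merely $L^1$-convergent $F_\delta$; one must exploit the joint convexity of $B$ (so that $\int_{Q_T}B(\rho_\delta,F_\delta)\leq\int_{Q_T}B(\rho,F)$ under time mollification) and set up a weak–strong pairing, e.g.\ of $F_\delta/\sqrt{\rho_\delta+\eps}$ weakly in $L^2$ against a strongly convergent second factor, which in turn requires spatial compactness you have not established. These two steps—the admissibility of the logarithmic test function up to the boundary and the limit in the product—are the actual mathematical content of the proposition, and they are the parts your proposal leaves unproved.
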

As $\Omega$ is convex, one can directly apply results from \cite{AGS08} (see in particular \S 10.1.2.E, Proposition 10.3.18, and thm. 10.4.9 therein).
Roughly speaking, the convexity of $\Omega$ combined with the regularity of $V$ guarantee that the relative entropy $\rho\mapsto \E(\rho)=\mc H(\rho\,\vert\,\pi)$ is $\lambda$-displacement convex for some $\lambda\in\R$, which then opens the way to the subdifferential calculus developped in \cite{AGS08}.
The extension to non-smooth and non-convex Lipschitz domains of the above chain rule is an open problem up to our knowledge.

The following properties of EDI solutions is then an easy corollary:
\begin{proposition}
\label{prop:props_EDI}
Under the same assumptions, take in addition $\rho^0$ with finite energy $\E(\rho^0)<+\infty$.
Then EDI solutions with initial datum $\rho^0$ are unique, solve the Fokker-Planck equation \eqref{eq:FP} at least in the distributional sense, and satisfy in fact \emph{Energy Dissipation Equality} in the sense that $t\mapsto \E(\rho^t)$ is absolutely continutous with
$$
\frac{d}{dt}\E(\rho^t)=-\mc R(\rho^t)-\int_\Omega B(\rho^t,F^t)
\qquad\in L^1(0,T).
$$
\end{proposition}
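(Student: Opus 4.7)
The plan is to bootstrap the one-sided EDI inequality into a two-sided equality by pairing it with the chain rule, identify the flux from the equality case of Young's inequality, and then invoke standard gradient-flow uniqueness. First, let $\rho$ be an EDI solution in the sense of Definition~\ref{def:EDI}, and pick $F \in L^1(Q_T;\R^d)$ solving the continuity equation \eqref{eq:FP.cons.weak} and approximately realizing the infimum in \eqref{eq:EDI} (by a diagonal argument one may reduce to an actual minimizer, or simply work with a minimizing sequence and pass to the limit). The EDI \eqref{eq:EDI} together with $\E(\rho^0)<+\infty$ immediately gives $\int_0^T \mc R(\rho)<+\infty$ and $\iint_{Q_T} B(\rho,F)<+\infty$, which are exactly the assumptions needed to invoke the chain rule \eqref{eq:upper_chain_rule_abstract}.

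Integrating \eqref{eq:upper_chain_rule_abstract} over $[0,T]$ and applying the pointwise Young inequality
\[
F \cdot \nabla \log\frac{\rho}{\pi} \;\geq\; -B(\rho,F)\;-\;\frac{\rho}{2}\left|\nabla\log\frac{\rho}{\pi}\right|^2
\]
yields the reverse EDI
\[
\E(\rho^T) - \E(\rho^0) \;\geq\; -\iint_{Q_T} B(\rho,F) \;-\; \int_0^T \mc R(\rho).
\]
Combined with the EDI \eqref{eq:EDI}, both inequalities must be equalities. This forces pointwise equality in Young's inequality for a.e. $(t,x)\in Q_T$, which in turn identifies the flux as
\[
F \;=\; -\rho\,\nabla\log\frac{\rho}{\pi} \;=\; -\nabla\rho - \rho\,\nabla V
\]
on $\{\rho>0\}$, while $F=0$ a.e. on $\{\rho=0\}$ follows from $\iint B(\rho,F)<+\infty$ and the definition \eqref{eq:BB} of $B$. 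Substituting this expression into the continuity equation \eqref{eq:FP.cons.weak} gives the Fokker-Planck equation~\eqref{eq:FP} in the distributional sense. At the same time, the equality case furnishes the pointwise-in-time identity
\[
\frac{\rm d}{{\rm d}t}\E(\rho^t) \;=\; -\mc R(\rho^t) \;-\; \int_\Omega B(\rho^t,F^t),
\]
i.e.\ the full EDE, with the right-hand side in $L^1(0,T)$.

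For uniqueness, the convexity of $\Omega$ and smoothness of $V$ ensure $\lambda$-geodesic convexity of $\rho\mapsto\E(\rho)=\mc H(\rho\,|\,\pi)$ along Wasserstein geodesics for some $\lambda\in\R$; the EDE above is exactly the characterization of the Wasserstein gradient flow of $\E$ in the sense of \cite{AGS08}, for which uniqueness at fixed initial datum follows from \cite[Thm.~11.1.4]{AGS08} or directly from \cite{gigli2010heat}. Alternatively, once \eqref{eq:FP} is satisfied distributionally with $\rho^0\in L^1(\Omega)$, linear parabolic theory provides uniqueness directly.

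The only subtlety, and the main place where one must be careful, is justifying the equality-in-Young step and the subsequent identification of $F$ on the vacuum set $\{\rho=0\}$: one has to check that the $B,\mc R$ integrands are legitimately finite a.e. and that the chain rule \eqref{eq:upper_chain_rule_abstract} can be applied; this is where the convexity of $\Omega$ enters crucially, via the cited results of \cite{AGS08}.
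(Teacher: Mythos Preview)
Your argument for upgrading EDI to EDE and identifying the flux is essentially the same as the paper's: combine the chain rule \eqref{eq:upper_chain_rule_abstract} with Young's inequality to get the reverse inequality, sandwich, and read off equality in Young pointwise. The paper phrases this slightly differently---it first shows that EDI on $[0,T]$ forces EDI on every subinterval $[t_0,t_1]$ by additivity of the defect functional $J$, and then differentiates---but the content is the same.

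The genuine difference is in the uniqueness step. You invoke abstract gradient-flow uniqueness from \cite{AGS08} or linear parabolic theory as a black box. The paper instead reproduces \emph{Gigli's trick} explicitly: given two EDI solutions $(\rho_i,F_i)$, set $\tilde\rho=\frac12(\rho_1+\rho_2)$, $\tilde F=\frac12(F_1+F_2)$; joint convexity of $\mc R$ and $B$ gives that $(\tilde\rho,\tilde F)$ satisfies an EDI-type inequality with $\frac12(\E(\rho_1^\tau)+\E(\rho_2^\tau))$ on the left, while the chain rule applied to $\tilde\rho$ gives the reverse bound with $\E(\tilde\rho^\tau)$; strict convexity of $\E$ then forces $\rho_1^\tau=\rho_2^\tau$. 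This is self-contained and uses only the ingredients already on the table (convexity of the dissipation functionals, strict convexity of the entropy), whereas your route either imports the full machinery of \cite[Thm.~11.1.4]{AGS08} or steps outside the EDI framework entirely via linear PDE theory. Both are legitimate, but the paper's version is more in keeping with the variational spirit of the appendix.
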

\begin{proof}
Let us first show that any EDI solution is a distributional solution, which amounts to proving that the flux driving the continuity equation \eqref{eq:FP.cons.weak} is $F=-\nabla \rho -\rho\nabla V=-\rho \nabla \log\left(\frac\rho\pi\right)$.
To this end we first note from \eqref{eq:upper_chain_rule_abstract} and Young's inequality that
$$
\left|\E(\rho^{t_1})-\E(\rho^{t_0})\right|
\leq
\frac 12\int_{t_0}^{t_1}\int_\Omega \left\{\frac{|F|^2}{\rho}+\rho \left|\nabla\log\left(\frac{\rho}{\pi}\right)\right|^2\right\}
= \int_{t_0}^{t_1}\int_\Omega B(\rho,F) + \int_{t_0}^{t_1}\mc R(\rho)
$$
for arbitrary subinterval $[t_0,t_1]\subset [0,T]$.
As a consequence
$$
J(\rho,F;t_0,t_1)\coloneqq \mc E(\rho^{t_1})+\int_{t_0}^{t_1}\mc R(\rho^t)+\int_{t_0}^{t_1}\int_\Omega B(\rho^t,F^t) -\mc E(\rho^{t_0})\geq 0
$$
is nonnegative.
By additivity, if $(\rho,F)$ is an EDI solution in time $[0,T]$ -- meaning $J(\rho,F;0,T)\leq 0$ -- we have that
$$
0\leq J(\rho,F;t_0,t_1) = J(\rho,F;0,T) - J(\rho,F;0,t_0)-J(\rho,F;t_1,T)\leq 0
$$
and $(\rho,F)$ is therefore an EDI solution in any subinterval.
With the absolute continuity from \eqref{eq:upper_chain_rule_abstract} this gives
\begin{multline*}
 -\frac 12 \int_\Omega \frac{|F^t|^2}{\rho^t} -\frac 12 \int_\Omega\rho^t\left|\nabla\log\left(\frac{\rho^t}{\pi}\right)\right|
\\
\leq \int_\Omega F^t \cdot \nabla\log\left(\frac{\rho^t}{\pi}\right)=\frac{d}{dt}\E(\rho^t)
% \\
\overset{\text{EDI}}{\leq} -\int_\Omega B(\rho^t,F^t)-\mc R(\rho^t)
\\
=-\frac 12 \int_\Omega \frac{|F^t|^2}{\rho^t} -\frac 12 \int_\Omega\rho^t\left|\nabla\log\left(\frac{\rho^t}{\pi}\right)\right|.
\end{multline*}
This forces equality in Young's inequality, hence $F^t=-\rho^t\nabla\log\left(\frac{\rho^t}{\pi}\right)$ and $(\rho,F)$ is indeed a distributional solution.
This also gives equality in EDI, and $(\rho,F)$ is in fact an EDE solution as in our statement.

For the uniqueness we implement the so-called \emph{Gigli's trick} \cite{gigli2010heat}.
Let $(\rho_1,F_1),(\rho_2,F_2)$ be two EDI solutions with common initial datum $\rho^0$ and set
$$
\tilde \rho\coloneqq \frac 12\left(\rho_1+\rho_2\right)
\qquad\text{and}\qquad
\tilde F\coloneqq \frac 12\left(F_1+F_2\right).
$$
By linearity $(\tilde \rho,\tilde F)$ still solves the continuity equation with initial datum $\tilde\rho^0=\rho^0$.
Fix any $\tau\in (0,T]$ and recall form the first part of the proof that any EDI solution in  $[0,T]$ is also an EDI solution in $[0,\tau]$.
Summing the two EDI inequalities for $i=1,2$ and leveraging the joint convexity of the Fisher information and Benamou-Brenier functionals, we see that
\begin{multline*}
\frac 12\left(\E(\rho_1^{\tau})+\E(\rho_2^{\tau})\right) + \int_0^{\tau} \mc R(\tilde\rho) + \int_{Q_{\tau}}B(\tilde\rho,\tilde F)
\\
\leq
\frac 12\left(\E(\rho_1^{\tau})+\E(\rho_2^{\tau})\right) + \int_0^{\tau} \frac 12 \left[\mc R(\rho_1)+\mc R(\rho_2)\right] + \int_{Q_{\tau}}\frac 12\left[ B(\rho_1,F_1)+B(\rho_2,F_2)\right]
\\
\leq
\frac 12\left(\E(\rho_1^0)+\E(\rho_2^0)\right)
=
\E(\rho^0)=\E(\tilde\rho^0).
\end{multline*}
This shows in particular that $(\tilde\rho,\tilde F)$ has finite kinetic energy and Fisher information in $[0,\tau]$.
Owing to \eqref{eq:upper_chain_rule_abstract} and Young's inequality we see that $ \E(\tilde\rho^0)\leq \E(\tilde\rho^{\tau}) +\int_0^{\tau} \mc R(\tilde\rho) + \int_{Q_{\tau}} B(\tilde\rho, \tilde F)$, which substituted into the above right-hand side yields
$$
\E\left(\frac{\rho^{\tau}_1+\rho^{\tau}_2}{2}\right)=\E(\tilde \rho^{\tau})\geq \frac 12\left(\E(\rho^{\tau}_1)+\E(\rho^{\tau}_2)\right).
$$
By strict convexity of $\E$ this implies $\rho^{\tau}_1=\rho^{\tau}_2$, and since $\tau\in (0,T)$ was arbitrary the proof is complete.
\end{proof}

% % % % % % % % % % % % % % % % % % % % % % % % % % % % % % % % % % % % % % % % % % % % % % % % % d

\section{An Aubin-Lions-Moussa compensation-compactness result}\label{sec:appendix_Moussa}
The technical statement below is a kind of compensation-compactness argument, allowing to pass to the limit for the product of two weakly converging sequences.
\begin{proposition}
\label{prop:aubin_lions}
Let $\Omega\subset \R^d$ be Lipschitz and bounded, and consider two sequences $\rho_k,f_k$ such that
\begin{enumerate}[(i)]
 \item
 \label{item:rho_f_L1_Linfty}
 $\rho_k$ is bounded in $L^1(Q_T)$ and $f_k$ is bounded in $L^\infty(Q_T)$
 \item
 \label{item:rho_equi}
 $\rho_k$ is $L^1(Q_T)$-equiintegrable
 \item
 \label{item:sp_compact}
 for some fixed modulus of continuity $\eta(\cdot)$ and all $\omega\subset\subset \Omega$, there holds
 \begin{equation}
 \label{eq:f_space_compact}
 \sup\limits_{k}\int_0^T \int_{\omega}|f_k(t,x)-f_k(t,x+h)|\leq \eta(|h|)
 \qquad\text{for }|h|\leq \operatorname{dist}(\omega,\partial\Omega)
 \end{equation}
 \item
 \label{item:time_compact}
 $\partial_t \rho_k$ is bounded in $\mc M(0,T;\left(W^{m,q}(\Omega)\right)')$ for some fixed $m\in \N,q\geq 1$
\end{enumerate}
Then, up to extraction of a subsequence, $\rho_k\rightharpoonup \rho$ weakly in $L^1(Q_T)$, $f_k\overset{\ast}{\rightharpoonup} f$ weakly-$\ast$ in $L^\infty(Q_T)$, and $\rho_kf_k\rightharpoonup \rho f$ in $\mc M(Q_T)$ in the sense that
$$
\int_{Q_T} \rho_k f_k\varphi \to \int_{Q_T} \rho f\varphi,
\qquad
\forall\,\varphi\in C(\bar Q_T).
$$
\end{proposition}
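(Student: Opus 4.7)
My approach is to reduce the convergence $\rho_k f_k \rightharpoonup \rho f$ in $\mc M(Q_T)$ to a compensated-compactness argument pairing the time regularity of $\rho_k$ (hypothesis (iv)) against the spatial regularity of $f_k$ (hypothesis (iii)), via a Moussa-type Aubin--Lions lemma. First, Dunford--Pettis applied to (i)--(ii) extracts a subsequence along which $\rho_k \rightharpoonup \rho$ weakly in $L^1(Q_T)$, and Banach--Alaoglu applied to (i) yields $f_k \overset{\ast}{\rightharpoonup} f$ weakly-$\ast$ in $L^\infty(Q_T)$ along a further subsequence. To establish the claim, it suffices to prove $\int_{Q_T}\rho_k f_k \varphi \to \int_{Q_T}\rho f \varphi$ for $\varphi \in C^\infty_c((0,T)\times \omega')$ with $\omega' \subset\subset \Omega$; the general case $\varphi \in C(\overline{Q_T})$ follows by density using the uniform bound $\|\rho_k f_k\|_{L^1(Q_T)} \leq C$ together with equiintegrability of $\rho_k$, used to absorb the contribution of any thin neighborhood of $\partial Q_T$.

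\textbf{Mollification and easy terms.} Pick $\omega$ with $\omega' \subset\subset \omega \subset\subset \Omega$, a smooth mollifier $\chi_\eps$ supported in $B(0,\eps)\subset\R^d$ with $\eps < \operatorname{dist}(\omega',\partial\omega)$, and set $f_k^\eps := f_k *_x \chi_\eps$ and $f^\eps := f *_x \chi_\eps$. Hypothesis (iii) yields $\|f_k - f_k^\eps\|_{L^1((0,T)\times \omega')} \leq C\eta(\eps)$ uniformly in $k$, while for fixed $\eps>0$ the sequence $f_k^\eps$ is uniformly bounded in $L^\infty(0,T;W^{\ell,\infty}(\omega'))$ for every $\ell \in \N$ (with constant depending on $\eps^{-\ell}$). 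Decomposing
\begin{align*}
\int_{Q_T}(\rho_k f_k - \rho f)\varphi = I_1(k,\eps) + I_2(k,\eps) + I_3(\eps)
\end{align*}
with $I_1 := \int \rho_k(f_k - f_k^\eps)\varphi$, $I_2 := \int(\rho_k f_k^\eps - \rho f^\eps)\varphi$ and $I_3 := \int \rho(f^\eps - f)\varphi$, the term $I_1$ is controlled by truncating $\rho_k$ at a level $M=M(\delta)$ provided by equiintegrability: $|I_1| \leq \|\varphi\|_\infty(2\|f_k\|_\infty \delta + M C \eta(\eps))$, which becomes arbitrarily small by fixing $\delta$ first and then sending $\eps \to 0$. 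The term $I_3(\eps)$ tends to $0$ by dominated convergence since $f^\eps \to f$ a.e. with $\|f^\eps\|_\infty \leq \|f\|_\infty$ and $\rho\varphi \in L^1(Q_T)$.

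\textbf{The main step and the main obstacle.} It remains to show $I_2(k,\eps) \to 0$ as $k \to \infty$ for each fixed $\eps>0$; this is the crux. For this I invoke the adaptation to our setting of the Moussa--Aubin--Lions lemma from \cite{moussa2016some}: combining equiintegrability (ii) with the time-derivative bound (iv) and the compact embedding $L^1(\Omega) \hookrightarrow W^{-s,r}(\Omega)$ (valid for sufficiently large $s$ and suitable $r\in[1,\infty)$ via Rellich--Kondrachov on the Lipschitz bounded $\Omega$, chosen so that $W^{s,r'}(\Omega) \hookrightarrow C^m(\overline{\Omega})$ by Sobolev embedding), one obtains, along a further subsequence,
\begin{align*}
\rho_k \longrightarrow \rho \quad \text{strongly in } L^1\big(0,T; W^{-s,r}(\Omega)\big).
\end{align*}
On the other hand, for fixed $\eps>0$ the test function $f_k^\eps \varphi$ is uniformly bounded in $L^\infty(0,T;W_0^{s,r'}(\Omega))$, and a further extraction produces $f_k^\eps\varphi \overset{\ast}{\rightharpoonup} f^\eps\varphi$ weakly-$\ast$ in this space (the limit being identified from $f_k \overset{\ast}{\rightharpoonup} f$). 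The duality pairing between $L^1(0,T;W^{-s,r})$ and its dual $L^\infty(0,T;W^{s,r'})$ is bilinearly continuous along strong $\times$ weak-$\ast$ convergence, hence
\begin{align*}
I_2(k,\eps) = \int_0^T \langle \rho_k - \rho, f_k^\eps \varphi\rangle \,\ud t + \int_0^T \langle \rho, (f_k^\eps - f^\eps)\varphi\rangle \,\ud t \longrightarrow 0.
\end{align*}
Sending $k \to \infty$ and then $\eps \to 0$ in the decomposition concludes. The hardest point is precisely the Moussa-type strong compactness of $\rho_k$: classical Aubin--Lions requires reflexivity of the target, which fails in $L^1$, and $\partial_t \rho_k$ is only a bounded measure in time rather than an $L^1$ function. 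Overcoming both issues demands the de la Vallée--Poussin criterion to upgrade the equiintegrable $L^1$ bound to a genuine Orlicz $L^\Phi$ bound on a subsequence, together with a time-regularization argument tailored to the measure-valued time derivative, which is exactly the adaptation of \cite{moussa2016some} announced at the end of the introduction.
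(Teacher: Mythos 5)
Your route is genuinely different from the paper's: you mollify $f_k$ rather than $\rho_k$, and several pieces are solid. The reduction to $\varphi$ compactly supported in $(0,T)\times\Omega$, the treatment of $I_3$ by dominated convergence, and above all the control of $I_1=\int\rho_k(f_k-f_k^\eps)\varphi$ by truncating $\rho_k$ at a level $M(\delta)$ given by equiintegrability (using the $L^\infty$ bound on $f_k$ on the super-level set and the modulus $\eta$ elsewhere) are correct; this truncation is, in substance, the same trick the paper uses to kill its Friedrichs commutator $S_{k,n}=(\rho_k*\zeta_n)f_k-(\rho_kf_k)*\zeta_n$, which is the paper's only real departure from \cite{moussa2016some}. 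The paper, however, never needs any strong compactness of $\rho_k$: the time bound (iv) enters only through Moussa's original handling of the fixed-mollification term, followed by a cut-off localization to the bounded domain.

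The gap is at $I_2$. The strong convergence $\rho_k\to\rho$ in $L^1(0,T;W^{-s,r}(\Omega))$ is the load-bearing step of your argument, and you neither prove it nor point to a result that contains it: the ``adaptation of \cite{moussa2016some} announced at the end of the introduction'' is exactly the proposition you are asked to prove (a product convergence statement, established in the appendix by a commutator argument), and Moussa's Proposition~1 is likewise a product lemma, not an $L^1$-in-time negative-Sobolev compactness theorem; the citation is therefore effectively circular. The ingredients you list for it are also misdirected: no de la Vall\'ee--Poussin/Orlicz upgrade of $\rho_k$ is needed there (equiintegrability plays no role in that compactness; it is needed only for $I_1$ and the localization). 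What the step actually requires is (a) relative compactness of bounded sets of $L^1(\Omega)$ in $\bigl(W_0^{s,r'}(\Omega)\bigr)'$, via the compact adjoint of $W^{s,r'}(\Omega)\hookrightarrow\hookrightarrow C(\overline\Omega)$; (b) the continuous embedding $\bigl(W^{m,q}(\Omega)\bigr)'\hookrightarrow\bigl(W_0^{s,r'}(\Omega)\bigr)'$, which your condition $W^{s,r'}(\Omega)\hookrightarrow C^m(\overline\Omega)\subset W^{m,q}(\Omega)$ does provide on the bounded $\Omega$; and (c) the BV-in-time translate estimate $\int_0^{T-h}\|\rho_k(t+h)-\rho_k(t)\|_{(W_0^{s,r'})'}\,\ud t\leq h\,\|\partial_t\rho_k\|_{\mc M(0,T;(W^{m,q})')}$ fed into Simon's compactness criterion, which needs no reflexivity. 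With such a lemma proved, your $I_2$ argument closes (for its first piece only boundedness of $f_k^\eps\varphi$ in $L^\infty(0,T;W_0^{s,r'})$ is needed, and the second piece follows from Fubini and the weak-$\ast$ convergence of $f_k$ in $L^\infty(Q_T)$). As written, though, the hardest point of the proof is delegated to a reference that does not contain it, so the argument is incomplete until you supply this Aubin--Lions--Simon variant yourself.
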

\noindent
We stress that this is merely a variant on \cite[Proposition 1]{moussa2016some}, with however the subtle difference that the space compactness is obtained therein via suitable $W^{1,p}(\Omega)$ bounds, whereas we use here the more versatile difference quotient estimate \eqref{item:sp_compact}.
This has two main advantages: first, this covers the case where space compactness is obtained via discrete difference quotients, as is typical for finite volume schemes (see our practical application for the proof of Proposition~\ref{prop:convL1}).
Second, and more importantly, this allows more general scenarios since in \cite{moussa2016some} some restriction is imposed between the (spatial) Sobolev exponent $p\in [1,d]$ on $f$ and the limitation $\alpha \in [1,p^*)$ on some dual $L^{\alpha'}(\Omega)$ estimates on $\rho_k$.
This will be circumvented in our particular setting via the equiintegrability assumption \eqref{item:rho_equi} on $\rho_k$, but at the expense of our $L^\infty$ bounds on $f_k$.

\begin{proof}
Assume first that $\Omega=\R^d$ and that our space compactness \eqref{eq:f_space_compact} holds with $\omega=\R^d$.
We reproduce below the proof of \cite[Proposition 1]{moussa2016some} almost verbatim, and only point out the main differences.
Pick a mollifying sequence $\zeta_n(x)=n^d\zeta(nx)$ (acting in space only and supported in $B_{1/n}$), and write for any test-function $\varphi$
$$
\begin{aligned}
 \int_0^T\int_{\R^d}( \rho f -\rho_k f_k)\varphi
 & =
\int_0^T\int_{\R^d}[ \rho f -(\rho*\zeta_n) f]\varphi
\\
& \phantom{=}+\int_0^T\int_{\R^d}[(\rho*\zeta_n) f - (\rho_k*\zeta_n) f_k]\varphi
\\
& \phantom{=}+\int_0^T\int_{\R^d}[(\rho_k*\zeta_n) f_k- (\rho_k f_k)*\zeta_n]\varphi
\\
& \phantom{=}+\int_0^T\int_{\R^d}[(\rho_k f_k)*\zeta_n-\rho_kf_k]\varphi
\\
& \eqqcolon  I_1(n)+ I_2(k,n)+I_3(k,n)+ I_4(k,n).
\end{aligned}
$$
Arguing as in \cite{moussa2016some} one shows without too much trouble that $I_1(n)\to 0$ as $n\to\infty$, that $I_2(k,n)\to 0$ for fixed $n$ as $k\to\infty$, and that $I_4(k,n)\to 0$ uniformly in $k$ as $n\to\infty$.
The main difference lies here in the Friedrich commutator $I_3$, which we handle now with care.
To this end we will show that
$$
S_{k,n}(t,x)\coloneqq[(\rho_k*\zeta_n) f_k- (\rho_k f_k)*\zeta_n](t,x)
$$
converges strongly to zero in $L^1((0,T)\times\R^d)$ as $n\to\infty$, uniformly in $k$.
We first observe that, by Fubini's theorem,
$$
\|S_{k,n}\|_{L^1((0,T)\times\R^d)}
\leq
\int_{B_{1/n}}\int_0^T\int_{\R^d}|\rho_k(t,x-y)|\,|f_k(t,x)-f_k(t,x-y)|\zeta_n(y) \ed x\ed t \ed y.
$$
Take $r(n)\to+\infty$ as $n\to\infty$, and let $E_{k,n}^y=\{(t,x)\in(0,T)\times\R^d: |\rho_k(t,x-y)|> r(n)\}$.
Owing to our  $L^1$ bound \eqref{item:rho_f_L1_Linfty} we have that $\meas(E_{k,n}^y)\leq \frac 1{r(n)}\int_{Q_T}\rho_k\leq \frac{C}{r(n)}\to 0$ uniformly in $k$ as $n\to\infty$.
Whence by the equiintegrability assumption \eqref{item:rho_equi}
\begin{multline*}
 \int_{E_{k,n}^y}|\rho_k(t,x-y)|\,|f_k(t,x)-f_k(t,x-y)|\ed x\ed t
\\
\leq 2\|f_k\|_{L^\infty((0,T)\times\R^d)}\int_{E_{k,n}^y}|\rho_k(t,z)|\ed z\ed t
\xrightarrow[n\to\infty]{}0,
\qquad \text{uniformly in }y,k,
\end{multline*}
and integrating in $y\in B_{1/n}$
\begin{equation}
\label{eq:commutator_E}
 \int_{B_{1/n}}\int_{E_{k,n}^y}|\rho_k(t,x-y)|\,|f_k(t,x)-f_k(t,x-y)|\zeta_n(y)\ed x\ed t\ed y \xrightarrow[n\to\infty]{}0,
\qquad \text{uniformly in }k.
\end{equation}
In $(E_{k,n}^y)^\complement$ we have by definition $|\rho_k(t,x-y)|\leq r(n)$ and we use instead the space compactness \eqref{eq:f_space_compact} (recalling also that we can take $\omega=\R^d$ at this stage) to estimate
\begin{multline*}
 \int_{(E_{k,n}^y)^\complement}|\rho_k(t,x-y)|\,|f_k(t,x)-f_k(t,x-y)|\ed x\ed t
\\
\leq
\int_{(E_{k,n}^y)^\complement}r(n)\,|f_k(t,x)-f_k(t,x-y)|\ed x\ed t
\\
\hspace{3cm}
\leq r(n) \sup\limits_k\int_{(0,T)\times\R^d}|f_k(t,x)-f_k(t,x-y)|\ed x\ed t
\\
\leq r(n) \eta(|y|).
\end{multline*}
Choosing the speed $r(n)=o\left(\frac{1}{\eta(1/n)}\right)$ as $n\to\infty$ and integrating in $y$, we end up with
\begin{multline}
\label{eq:commutator_Ec}
 \int_{B_{1/n}}\int_{(E_{k,n}^y)^\complement}|\rho_k(t,x-y)|\,|f_k(t,x)-f_k(t,x-y)|\zeta_n(y)\ed x\ed t\ed y
 \\
 \leq
\int_{B_{1/n}}r(n)\eta(|y|)\zeta_n(y)\ed y
\leq r(n)\eta(1/n)
 \xrightarrow[n\to\infty]{}0,
\qquad \text{uniformly in }k.
\end{multline}
Gathering \eqref{eq:commutator_E}--\eqref{eq:commutator_Ec} gives $\|S_{k,n}\|_{L^1}\to 0$ and therefore
$$
|I_3(k,n)|\leq \|S_{k,n}\|_{L^1}\|\varphi\|_{L^\infty}
\xrightarrow[n\to\infty]{}0,
\qquad \text{uniformly in }k
$$
at least in the whole space $\Omega=\R^d$.
The rest of the proof is then identical to \cite[Proposition 1]{moussa2016some} and we omit the details for the sake of brevity.

Coming back now to the case of bounded domains $\Omega\subset\R^d$, we can apply the same localization argument from \cite[Proposition 3]{moussa2016some}:
Take an exhausting sequence of compact sets $K_l\subset\subset \Omega$ and a sequence of bump functions $0\leq \chi_l(x)\leq 1$ such that $\chi\equiv 1$ on $K_l$, with $\meas(\Omega\setminus K_l)\leq 1/l$ as $l\to +\infty$.
Extending $\rho_k,f_k$ by zero outside of $\Omega$, it is easy to check that the sequences $\{\chi_l\rho_k\}_k,\{\chi_l f_k\}_k$ satisfy the assumptions in the previous step for fixed $l$, hence $\chi_l^2\rho_k f_k\rightharpoonup \chi^2_l \rho f$ in the sense of measures as $k\to\infty$.
Writing $E_l=\Omega\setminus K_l$, we get
$$
\begin{aligned}
\left|\int_{Q_T}(\rho_kf_k-\rho f)\varphi\right|
& =\Big|\int_{Q_T}(\chi_l^2\rho_kf_k-\chi_l^2\rho f)\varphi
+ \int_{Q_T}(1-\chi_l^2)\rho_kf_k\varphi
-\int_{Q_T}(1-\chi_l^2)\rho f\varphi
\Big|
\\
& \leq
\left|\int_{Q_T}(\chi_l^2\rho_kf_k-\chi_l^2\rho f)\varphi\right|
+\int_{Q_T}(1-\chi_l^2)|\rho_kf_k\varphi|
+\int_{Q_T}(1-\chi_l^2)|\rho f\varphi|
\\
&
\leq
\left|\int_{Q_T}(\chi_l^2\rho_kf_k-\chi_l^2\rho f)\varphi\right|
+ \|\varphi\|_{L^{\infty}(Q_T)}\int_0^T\int_{E_l}|\rho_k f_k|+|\rho f|.
\end{aligned}
$$
Pick an arbitrary $\eps>0$.
With our assumptions \eqref{item:rho_f_L1_Linfty}--\eqref{item:rho_equi} it is easy to check that 
\[
\sup\limits_k \int_0^T\int_E |\rho_k f_k|\to 0 \quad\text{as}\;  \meas(E)\to 0,\quad E\subset\Omega.
\]
Owing to $\meas(E_l)\leq 1/l\to 0$ and $\rho f\in L^1(Q_T)$, we can first choose $l=l_0$ large enough so that the second term in the r.h.s. is less than $\eps/2$.
For this fixed $l=l_0$ the first term can also be made smaller than $\eps/2$ if $k\geq k_0$ is large enough, and the claim finally follows.
\end{proof}
\medskip

\noindent
\textbf{Acknowledgements}
This work was supported in part by the Labex CEMPI  (ANR-11-LABX-0007-01) and by the project MATHSOUT of the PEPR Mathematics in Interaction (ANR-23-EXMA-0010) funded by the French National Research Agency. 
LM was supported by \textit{Funda\c{c}\~ao para a Ci\^encia e a Tecnologia} through grants 10.54499/UIDB/00208/2020 and 10.54499/2020.00162.CEECIND/CP1595/CT0008.
\bibliographystyle{plain}      
\bibliography{refs}

\end{document}